\newtheorem*{Q}{Question}
\newtheorem*{1.7a}{(\ref{s1}.7a)}
\newtheorem*{1.7b}{(\ref{s1}.7b)}
\newtheorem*{1.7c}{(\ref{s1}.7c)}
\newtheorem*{1.7d}{(\ref{s1}.7d)}
\newtheorem*{1.7e}{(\ref{s1}.7e)}
\newtheorem*{1.7f}{(\ref{s1}.7f)}
\newtheorem*{1.7g}{(\ref{s1}.7g)}
\newtheorem*{(i)}{(i)}
\newtheorem*{(ii)}{(ii)}
\newtheorem*{t2.5a}{\ref{t2.5}a~Corollary}
\newtheorem*{t2.5b}{\ref{t2.5}b~Corollary}
\newtheorem*{t2.6a}{\ref{t2.6}a~Corollary}
\newtheorem*{t2.6b}{\ref{t2.6}b~Corollary}
\newtheorem*{2.5a}{\ref{2.5}a~Corollary}
\newtheorem*{2.5b}{\ref{2.5}b~Corollary}
\newtheorem*{2.5c}{\ref{2.5}c~Corollary}
\newtheorem*{2.7a}{\ref{2.7}a~Lemma}
\newtheorem*{2.7b}{\ref{2.7}b~Lemma}
\newtheorem*{t2.9a}{\ref{t2.9}a~Corollary}
\newtheorem*{t2.9b}{\ref{t2.9}b~Corollary}
\newtheorem*{t2.9c}{\ref{t2.9}c~Corollary}
\newtheorem*{t2.10a}{\ref{t2.10}a~Corollary}
\newtheorem*{t2.10b}{\ref{t2.10}b~Corollary}
\newtheorem*{t2.10c}{\ref{t2.10}c~Corollary}
\newtheorem*{t2.10d}{\ref{t2.10}d~Question}
\newtheorem*{t2.10e}{\ref{t2.10}e~Corollary}
\newtheorem*{t2.10f}{\ref{t2.10}f~Theorem}
\newtheorem*{t2.10g}{\ref{t2.10}g~Corollary}
\newtheorem*{4.1b}{\ref{4.1}b~Lemma}
\newtheorem*{4.2c}{\ref{4.2}c~Lemma}
\newtheorem*{4.2d}{\ref{4.2}d~Theorem}
\newtheorem*{4.2e}{\ref{4.2}e~Corollary}
\newtheorem*{4.2h}{\ref{4.2}h~Corollary}
\newtheorem*{4.2j}{\ref{4.2}j~Theorem}
\newtheorem*{4.3a}{\ref{4.3}a~Lemma}
\newtheorem*{4.3b}{\ref{4.3}b~Theorem}
\newtheorem*{4.3c}{\ref{4.3}c~Corollary}
\newtheorem*{4.3d}{\ref{4.3}d~Corollary}
\newtheorem*{cor*}{Corollary}
\newtheorem{thm}{Theorem}[section]
\newtheorem{lem}[thm]{Lemma}
\newtheorem{prop}[thm]{Proposition}
\newtheorem*{6.3a}{\ref{6.3}a~Lemma}
\newtheorem*{6.3b}{\ref{6.3}b~Theorem}
\newtheorem*{6.3c}{\ref{6.3}c~Proposition}
\newtheorem*{6.3d}{\ref{6.3}d~Proposition}
\theoremstyle{definition}
\newtheorem*{3.6}{\ref{3}.6~The Ellis-Glasner-Shapiro tower}
\newtheorem*{6.3e}{\ref{6.3}e~Remark}
\newtheorem*{Sn}{Standing notation}
\newtheorem*{1.7}{(\ref{s1}.7)~Remark}
\newtheorem{exa}[thm]{Example}
\newtheorem{se}[thm]{}
\newtheorem*{4.1a}{\ref{4.1}a~Definition}
\newtheorem*{4.2a}{\ref{4.2}a~Definition}
\newtheorem*{4.2b}{\ref{4.2}b~Definition}
\newtheorem*{4.2f}{\ref{4.2}f}
\newtheorem*{4.2g}{\ref{4.2}g}
\newtheorem*{4.2i}{\ref{4.2}i~Remark}
\newtheorem*{4.2k}{\ref{4.2}k~Remark}
\newtheorem*{4.3e}{\ref{4.3}e~Remark}
\newtheorem*{4.3f}{\ref{4.3}f~Remark}
\newtheorem*{4.3g}{\ref{4.3}g~Remark}
\newcommand{\I}{\mathbb{I}}
\newcommand{\bt}{\boldsymbol{t}}
\journal{JDE (23rd January 2023)}
\begin{document}

\begin{frontmatter}
\title{On M-dynamics and Li-Yorke chaos of extensions of minimal dynamics}

\author{Xiongping Dai}
\ead{xpdai@nju.edu.cn}
\address{Department of Mathematics, Nanjing University, Nanjing 210093, People's Republic of China}

\begin{abstract}
Let $\pi\colon\mathscr{X}\rightarrow\mathscr{Y}$ be an extension of minimal compact metric flows such that $\texttt{R}_\pi\not=\Delta_X$. A subflow of $\texttt{R}_\pi$ is called an M-flow if it is T.T. and contains a dense set of a.p. points. In this paper we mainly prove the following:
\begin{enumerate}[(1)]
\item $\pi$ is PI iff $\Delta_X$ is the unique M-flow containing $\Delta_X$ in $\texttt{R}_\pi$.
\item If $\pi$ is not PI, then there exists a canonical Li-Yorke chaotic M-flow in $\texttt{R}_\pi$. In particular, an Ellis weak-mixing non-proximal extension is non-PI and so Li-Yorke chaotic.
\end{enumerate}
In addition, we show
\begin{enumerate}[(1)]
\item[(3)] a unbounded or non-minimal M-flow, not necessarily compact, is sensitive; and
\item[(4)] every syndetically distal compact flow is pointwise Bohr a.p.
\end{enumerate}
\end{abstract}

\begin{keyword}
Minimal flow $\cdot$ PI-extension $\cdot$ M-flow $\cdot$ Li-Yorke chaos $\cdot$ Syndetic distability

\medskip
\MSC[2010] 37B05
\end{keyword}
\end{frontmatter}
\section{Introduction}\label{s1}
We begin by reviewing briefly the basic notions needed.
Let $T$ be a Hausdorff topological group with identity $e$ and $X$ a Hausdorff uniform space. Unless specified otherwise, by $(T,X)$ we mean a \textit{flow} \cite{GH, G76, V77, B79, A88, EE} with phase group $T$ and with phase space $X$; that is to say, there is a continuous phase mapping $T\times X\rightarrow X$, denoted ${(t,x)\mapsto tx}$, such that $ex=x$ and $(st)x=s(tx)$ for all $x\in X$ and $s,t\in T$. If $T$ is only a topological monoid, then $(T,X)$ will be called a \textit{semiflow}. If $X$ is compact (resp.~metrizable, $\dotsc$), then $(T,X)$ will be called a compact (resp.~metric, $\dotsc$) \textit{dynamic} where $T$ may be a group or monoid.

As in \cite{V77} the dynamics $(T,X)$, $(T,Y)$, $(T,Z)$, $\dotsc$ will be simply written as $\mathscr{X}$, $\mathscr{Y}$, $\mathscr{Z}$, $\dotsc$, respectively. Write $\mathscr{X}\times\mathscr{X}$ for $(T,X\times X)$ defined by $(t,(x_1,x_2))\mapsto(tx_1,tx_2)$.

\begin{Sn}
\item
\begin{enumerate}[1.]
\item In a non-metric space, ``$x_j\to x$'' is always under the sense of Moore-Smith net convergence.
\item By $\mathfrak{N}_x$ we will denote the neighborhood system of a point $x$ in the ambience.
\item $\mathscr{U}_X$ stands for a compatible symmetric uniformity structure of $X$. For all $A\subset X$ and $\varepsilon\in\mathscr{U}_X$, $\varepsilon[A]$ is the $\varepsilon$-neighborhood of $A$ in $X$ defined as the set $\{x\in X\,|\,\exists\,a\in A\textrm{ s.t. }(a,x)\in\varepsilon\}$.
\item $\{*\}$ stands for a one-point dynamic.
\end{enumerate}
\end{Sn}

Let $\mathscr{X}$ be any dynamic. Then $\mathscr{X}$  is minimal iff $\overline{Tx}=X\ \forall x\in X$. A point $x\in X$ is said to be \textit{almost periodic} (a.p.) under $\mathscr{X}$ provided that
\begin{enumerate}[({\ref{s1}.1}a)]
\item $N_T(x,U):=\{t\,|\,tx\in U\}$
\end{enumerate}
is syndetic in $T$ for all $U\in\mathfrak{N}_x$ (see Def.~I in $\S$\ref{s2.2.1}).
$\mathscr{X}$ is called \textit{topologically transitive} (T.T.) if for all nonempty open sets $U,V$ in $X$,
\begin{enumerate}[({\ref{s1}.1}b)]
\item $N_T(U,V):=\{t\in T\,|\,V\cap tU\not=\emptyset\}$
\end{enumerate}
is nonempty.
We say $\mathscr{X}$ is \textit{weakly mixing} if $\mathscr{X}\times\mathscr{X}$ is a T.T. dynamic.
Following Glasner-Weiss~\cite{GW}:
\begin{enumerate}[({\ref{s1}}.2)]
\item $\mathscr{X}$ is referred to as an \textit{M-dynamic} if $\mathscr{X}$ is T.T. and $X$ contains a dense set of a.p. points.
\end{enumerate}
See Theorems~\ref{thm1} and \ref{thm2} for a sufficient condition of M-dynamics. Moreover, we will prove that every syndetically distal compact flow is pointwise Bohr a.p. (see Def.~III and Def.~v in $\S$\ref{s2.2.1} and Theorem~\ref{t2.10}).

Let $\mathscr{X}$ and $\mathscr{Y}$ be two dynamics. A continuous map $\pi\colon X\rightarrow Y$ is called an \textit{extension} of dynamics, denoted $\pi\colon\mathscr{X}\rightarrow\mathscr{Y}$, if $\pi X=Y$ and $\pi tx=t\pi x$ for all $t\in T$ and $x\in X$. Extensions are important elements in the structure theory of minimal compact dynamics (cf.~\cite{G76, V77, B79, A88, EE, CD20}). For $\pi\colon\mathscr{X}\rightarrow\mathscr{Y}$ and $n\ge2$, we write
\begin{enumerate}[(\ref{s1}.3)]
\item $\texttt{R}_\pi^n=\{(x_1,\dotsc,x_n)\in X^n\,|\,\pi x_1=\dotsm=\pi x_n\}$ and $\texttt{R}_\pi=\texttt{R}_\pi^2$.
\end{enumerate}
Clearly, $\texttt{R}_\pi$ is an invariant closed equivalence relation on $X$. So $\mathscr{R}_\pi=(T,\texttt{R}_\pi)$ is a dynamic. Usually one is interested to the dynamics of $\mathscr{R}_\pi$ that is driven by $\mathscr{Y}$, for example, in skew-product flows and random dynamical systems.

Now let $\pi\colon\mathscr{X}\rightarrow\mathscr{Y}$ be an extension of minimal compact dynamics. As usual we say that:

\begin{enumerate}[({\ref{s1}}.4)]
\item[({\ref{s1}}.4)] $\pi$ is \textit{proximal} if $\overline{Tz}\cap\Delta_X\not=\emptyset$ $\forall z\in \texttt{R}_\pi$.
\item[({\ref{s1}}.5)]  $\pi$ is \textit{almost periodic} (a.p.) if there is no $(x,x^\prime)\in \texttt{R}_\pi\setminus\Delta_X$ such that $\overline{T(U\times V\cap \texttt{R}_\pi)}\cap\Delta_X\not=\emptyset$ for all $U\in\mathfrak{N}_x$ and $V\in\mathfrak{N}_{x^\prime}$.
\end{enumerate}
\begin{enumerate}[({\ref{s1}.6}a)]
\item $\pi$ is \textit{PI}~\cite{EGS, G76, V77, B79, A88, Ge01}, provided that there exists a minimal proximal extension $\rho\colon\mathscr{X}^\prime\rightarrow\mathscr{X}$ with $\pi^\prime=\pi\circ\rho\colon\mathscr{X}^\prime\rightarrow\mathscr{Y}$ can be built by
successive proximal and a.p. extensions; that is, there exists a ``PI-tower'':
\begin{diagram}
\mathscr{X}^\prime\\
\dTo~{\pi^\prime}&\rdTo~{\pi_1^\prime}\rdTo(4,2)~{\pi_2^\prime}{{}_{\quad\ldots}}\rdTo(8,2)~{\pi_\gamma^\prime}\rdTo(10,2)~{\pi_{\gamma+1}^\prime}{{}_{\ldots}}\rdTo(14,2)~{\pi_\vartheta^\prime}\\
\mathscr{Y}&\lTo_{\ \psi_0^1}&\mathscr{Y}_1&\lTo_{\quad \psi_1^2\quad}&\mathscr{Y}_2&\lTo&\cdots&\lTo&\mathscr{Y}_\gamma&\lTo_{\quad \psi_\gamma^{\gamma+1}\quad}&\mathscr{Y}_{\gamma+1}&\lTo&\cdots&\lTo&\mathscr{Y}_\vartheta
\end{diagram}
such that:
\begin{enumerate}[a.]
\item $\pi_\vartheta^\prime$ is an isomorphism,
\item $\psi_\gamma^{\gamma+1}$ is proximal or a.p. for all ordinal $\gamma<\vartheta$, and
\item if $\gamma$ with $\gamma\le\vartheta$ is a limit ordinal then $\mathscr{Y}_\gamma=\underleftarrow{\lim}_{\lambda<\gamma}\mathscr{Y}_\lambda$.
\end{enumerate}
Here $\mathscr{X}^\prime$ and $\mathscr{Y}_\gamma$, for $\gamma\le\vartheta$, all are compact dynamics.

\item $\pi$ is \textit{strictly PI} in the special case that $\rho$ is an isomorphism in (\ref{s1}.6a); that is, $\pi$ possesses a PI-tower itself.

\item In particular, if $\mathscr{X}$ is a nontrivial minimal weakly mixing compact dynamics with $T$ nilpotent (\cite[Def.~2.3.8]{CD20}), then $\mathscr{X}\rightarrow \{*\}$ is not PI (see Lemma~\ref{3.12}).
\end{enumerate}

Every non-minimal T.T. dynamic with a dense set of periodic points is Devaney chaotic (cf., e.g.,~\cite{B92, HY, CC, SK}). In fact, a non-minimal compact M-dynamic is sensitive (cf.~\cite[Thm.~1.3]{GW} and \cite[Prop.~2.5]{DT}). See \cite{Z, YZ} on sensitivity and its variations of extensions of minimal compact metric flows.

In this paper, we shall study sensitivity of noncompact M-dynamics (see $\S$\ref{4}); moreover, we will define and study the Li-Yorke chaos of extensions of minimal compact metric dynamics from the viewpoint of M-dynamics (see $\S$\ref{5}); and prove that every non-PI extension of minimal compact dynamics can canonically induce M-dynamics (Theorem~\ref{3.5}). We shall improve Bronstein's intrinsic characterization of PI-extensions (Theorem~\ref{3.10}). Finally, we shall consider examples in $\S$\ref{6}.

We now conclude the Introduction with a remark on an important open problem of Robert Ellis (1970s), which is relative closely to the definition (1.2) of M-dynamics.
\begin{1.7}
Let $\mathscr{X}$ be a T.T. pointwise a.p. compact dynamic, where the phase space $X$ is non-metrizable.

\begin{Q}[{R.~Ellis; cf.~\cite[p.~263]{EE}}]
Is $\mathscr{X}$ a minimal dynamic?
\end{Q}
\noindent
In some special cases, using Ellis's metric approaches the answer is affirmative ((\ref{s1}.7a), (\ref{s1}.7b), and (\ref{s1}.7c)).
\begin{1.7a}
If $T$ is $\sigma$-compact, then $\mathscr{X}$ is minimal.
\end{1.7a}

\begin{proof}
See \cite[Thm.~B.2.1]{CD20} for $T$ a semigroup; \cite[Prop.~4.23]{EE} for $T$ a countable group.
\end{proof}

\begin{1.7b}[{cf.~\cite[Prop.~1.9]{E78} and \cite[Prop.~4.24]{EE} for $T$ a group; \cite[Cor.~1.31]{AD} for $T$ a semigroup}]
If $\mathscr{X}$ is distal, then $\mathscr{X}$ is minimal.
\end{1.7b}

\begin{1.7c}[{cf.~\cite[Thm.~B.2.4]{CD20}}]
If $\theta\colon\mathscr{X}\rightarrow\mathscr{Z}$ is a distal extension with $\mathscr{Z}$ a minimal dynamic (see Def.~\ref{s2}.0f), then $\mathscr{X}$ is minimal.
\end{1.7c}

\begin{1.7d}
If $\mathscr{X}$ is weakly a.p. (in the sense of Gottschalk; i.e., $x\mapsto\overline{Tx}$ is continuous), then $\mathscr{X}$ is minimal.
\end{1.7d}

\begin{proof}
Obvious.
\end{proof}

\begin{1.7e}
If $\mathscr{X}$ is locally a.p. (Def.~II in $\S$\ref{s2.2.1}) such that $tX=X$ for all $t\in T$, then $\mathscr{X}$ is minimal.
\end{1.7e}

\begin{proof}
First we claim that $\texttt{P}(\mathscr{X})=\texttt{RP}(\mathscr{X})$ (see Def.~(\ref{s2}.0a) and (\ref{s2}.0b)). In fact, if $\mathscr{X}$ is a flow, this is \cite[Lem.~13(4)]{EG}. Now assume $\mathscr{X}$ is a semiflow. Let $Z\subset X$ and $x\in X$ such that $x$ is distal from $Z$ (i.e., there is an index $\alpha\in\mathscr{U}_X$ with $(tx,tz)\notin\alpha\ \forall t\in T, z\in Z$). Take $\beta\in\mathscr{U}_X$ with $\beta^3\subseteq\alpha$. Pick $U\in\mathfrak{N}_x$ and a syndetic set $A$ in $T$ such that $AU\subseteq\beta[x]$. Clearly, we have for all $a\in A$, $y\in U$ and $z\in Z$ that $(ay,az)\notin\beta$. Next, select a compact set $K$ in $T$ such that $Kt\cap A\not=\emptyset\ \forall t\in T$. Take $\gamma\in\mathscr{U}_X$ so small that $K\gamma\subseteq\beta$. Now for all $y\in U$, $z\in Z$ and $t\in T$, we have that $(ty,tz)\notin\gamma$; for otherwise, we can pick some $k\in K$ with $a=kt\in A$ so that $(ay,az)\in\beta$.
This implies that if $(x,x^\prime)\notin\texttt{P}(\mathscr{X})$, then there exist $U\in\mathfrak{N}_x$ and $V\in\mathfrak{N}_{x^\prime}$ such that $U$ is distal from $V$ under $\mathscr{X}$, and further, $(x,x^\prime)\notin\texttt{RP}(\mathscr{X})$. Thus, $\texttt{P}(\mathscr{X})=\texttt{RP}(\mathscr{X})$. (\textbf{Note.} This claim does not need conditions $tX=X\ \forall t\in T$ and T.T.)

If $M_1\not=M_2$ are two minimal sets in $X$, then $tM_1=M_1$ for all $t\in T$, and further by T.T. it follows that there is a pair $(x_1,x_2)\in M_1\times M_2$ such that $(x_1,x_2)\in\texttt{RP}(\mathscr{X})$. Then $(x_1,x_2)\in\texttt{P}(\mathscr{X})$ and $\overline{Tx_1}=\overline{Tx_2}$, contrary to $M_1\not=M_2$. Thus, $\mathscr{X}$ is minimal.
\end{proof}

\begin{1.7f}
If $\mathscr{X}$ is a pointwise regularly a.p. \textbf{flow} (see Def.~V in $\S$\ref{s2.2.1}), then $\mathscr{X}$ is minimal regularly a.p.
\end{1.7f}

\begin{proof}
By \cite[Thm.~5.18]{GH}, $(T,\overline{Tx})$ is minimal regularly a.p. for all $x\in X$ so that $\mathscr{X}$ is distal. Then by (\ref{s1}.7b), $\mathscr{X}$ is a minimal regularly a.p. flow.
\end{proof}

\begin{1.7g}
$\mathscr{X}$ is either minimal or sensitive (see Def.~\ref{4.2}b).
\end{1.7g}

\begin{proof}
By Theorem~\ref{4.2}j in $\S$\ref{4}.
\end{proof}
\end{1.7}

\section{AP-Transitive relations and stability}\label{s2}
We shall present a sufficient necessary condition for a subdynamic of $\mathscr{R}_\pi$ to be an M-dynamic in $\S$\ref{s2.1}. Moreover, we shall study in $\S$\ref{s2.2} the stability of noncompact flows.
\subsection{AP-Transitive relations}\label{s2.1}
In this subsection, $T$ is thought of as a discrete group or semigroup; and let $\mathbb{I}$ be any fixed minimal left ideal in $\beta T$ and $\mathbb{J}=\{u\in\mathbb{I}\,|\,u^2=u\}$.

Let $\pi\colon\mathscr{X}\rightarrow\mathscr{Y}$ be an extension of compact dynamics, where $\mathscr{Y}$ is minimal.
We first need to introduce a relation on $X$. Let $L\subseteq \texttt{R}_\pi$, $L\not=\emptyset$.
\begin{enumerate}[$\bullet$]
\item We say that $L$ is an \textit{AP-transitive relation for $\pi$} on $X$, provided that $L$ is a reflexive symmetric relation on $X$ such that if $(x_1,x_2)$, $(x_2,x_3)\in L$ are both a.p. points, then $(x_1,x_3)\in L$.
\end{enumerate}

Clearly, an invariant closed AP-transitive relation for $\pi$ on $X$ need not be an equivalence relation. Of course, if $\mathscr{X}$ is a $\pi$-distal extension of $\mathscr{Y}$, then every AP-transitive relation for $\pi$ is an equivalence relation on $X$.
\begin{enumerate}[$\bullet$]
\item Let $L=\overline{\{z\in \texttt{R}_\pi\,|\,z \textrm{ is a.p.}\}}$ such that if $u,v\in\mathbb{J}$ with $ux=vx$ for some $x\in X$ then $ux^\prime=vx^\prime$ for all $x^\prime\in L[x]\, (=\{x^\prime\in X\,|\,(x,x^\prime)\in L\})$. Then $L$ is an AP-transitive relation for $\pi$ on $X$.
\end{enumerate}
\begin{proof}
Let $(x_1,x_2)$ and $(x_2,x_3)\in L$ be a.p. Then there are $u,v\in\mathbb{J}$ with $u(x_1,x_2)=(x_1,x_2)$ and $v(x_2,x_3)=(x_2,x_3)$. So $ux_2=vx_2$. Then $ux_1=vx_1$ and $(x_1,x_3)=v(x_1,x_3)\in \texttt{R}_\pi$ is a.p. so that $(x_1,x_3)\in L$.
\end{proof}

Let $\texttt{P}_\pi$ and $\texttt{RP}_\pi$ be the $\pi$-relative proximal and regionally proximal relations on $X$, respectively; that is,
$$\texttt{P}_\pi=\{\tilde{x}\in \texttt{R}_\pi\,|\,\exists\, t_j\in T, \tilde{x}^\prime\in\Delta_X\textrm{ s.t. }t_j\tilde{x}\to\tilde{x}^\prime\}.\leqno{(\textrm{\ref{s2}.0a})}$$
and
$$\texttt{RP}_{\pi}=\{\tilde{x}\in \texttt{R}_\pi\,|\,\exists\, \tilde{x}_j\in \texttt{R}_\pi, t_j\in T, \tilde{x}^\prime\in\Delta_X\ s.t.\ \tilde{x}_j\to\tilde{x}, t_j\tilde{x_j}\to\tilde{x}^\prime\}.\leqno{(\textrm{\ref{s2}.0b)}}$$
In the case of $\mathscr{Y}=\{*\}$, we will write $\texttt{P}(\mathscr{X})=\texttt{P}_\pi$ and $\texttt{RP}(\mathscr{X})=\texttt{RP}_\pi$.
Let $L\subseteq \texttt{R}_\pi$, we then define
$$\texttt{P}_{\pi|L}=\texttt{P}_\pi\cap L\leqno{(\textrm{\ref{s2}.0c)}}$$
and
$$
\texttt{RP}_{\pi|L}=\{\tilde{x}\in L\,|\,\exists\, \tilde{x}_j\in L, t_j\in T, \tilde{x}^\prime\in\Delta_X\ s.t.\ \tilde{x}_j\to\bar{x}, t_j\tilde{x_j}\to\tilde{x}^\prime\}.\leqno{(\textrm{\ref{s2}.0d)}}
$$
In the case of $L=\texttt{R}_\pi$, $\texttt{P}_{\pi|L}=\texttt{P}_\pi$ and $\texttt{RP}_{\pi|L}=\texttt{RP}_\pi$.

Then:
\begin{enumerate}[{\ref{s2}.0}e.]
\item $\pi$ is \textit{proximal} iff $\texttt{P}_\pi=\texttt{R}_\pi$;
\item[{\ref{s2}.0}f.] a point $x\in X$ is \textit{$\pi$-distal} iff $\texttt{P}_\pi[x]\cap\overline{Tx}=\{x\}$; $\pi$ is \textit{distal} iff $\texttt{P}_\pi=\Delta_X$; in the case $\mathscr{Y}=\{*\}$, $\mathscr{X}$ is \textit{distal} iff $\pi$ is distal iff $\texttt{P}(\mathscr{X})=\Delta_X$.
\item[{\ref{s2}.0}g.] $\pi$ is \textit{distal-equicontinuous} (d.e.; cf.~\cite{CD20}) iff $\texttt{RP}_{\pi}=\Delta_X$ iff $\pi$ is a.p. (noting that $\pi$-equicontinuity need not imply $\pi$-distality in compact semiflows~\cite[Thm.~1.5, Exa.~1.6]{DLX}).

\item[{\ref{s2}.0}h.] If $\mathscr{X}$ is a compact flow with $\texttt{P}(\mathscr{X})=\texttt{RP}(\mathscr{X})$, then 
$$
\texttt{P}(\mathscr{X})=\{(x,x^\prime)\,|\,\forall \delta\in\mathscr{U}_X\exists A\subseteq T\textrm{ a syndetic set s.t. }A(x,x^\prime)\subseteq\delta\}
$$
(see \cite[Thm.~7]{C1} or \cite[Thm.~1.3.14]{B79}).
\end{enumerate}
Here the $\pi$-proximal cell of $x\in X$ is defined by $\texttt{P}_\pi[x]=\{x^\prime\,|\,x^\prime\in X, (x,x^\prime)\in \texttt{P}_\pi\}$. Notice that d.e. extension is also called a.p. extension or isometric extension in flows~(cf., e.g., \cite{G76, V77, A88}).

\begin{thm}[{cf.~\cite{M76, V77} for $L=\texttt{R}_\pi$}]\label{thm1}
Let $\pi\colon\mathscr{X}\rightarrow\mathscr{Y}$ be an extension of minimal compact flows. Suppose $L\subseteq \texttt{R}_\pi$ is a closed invariant AP-transitive relation on $X$ such that
\begin{enumerate}[(a)]
\item $L$ has a dense set of a.p. points and
\item $\texttt{RP}_{\pi|L}=L$.
\end{enumerate}
Then $(T,L)$ is T.T.; that is, $(T,L)$ is an M-flow.
\end{thm}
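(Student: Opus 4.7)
My plan is to prove topological transitivity of $(T,L)$ directly; together with (a) this gives the M-flow conclusion. So I take arbitrary nonempty open $U,V\subseteq L$ and look for $t\in T$ with $tU\cap V\neq\emptyset$. The special case $L=\texttt{R}_\pi$ is precisely the classical McMahon--Veech result cited in the statement, and my plan is to adapt their technique, driven by hypothesis (b) together with minimality of $\mathscr{X}$.

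Using (a), pick a.p.\ points $a=(x_1,x_2)\in U$ and $b=(x_3,x_4)\in V$, with idempotents $u,v\in\mathbb{J}$ fixing them. Apply (b) at $a$: since $a\in\texttt{RP}_{\pi|L}$, there exist $(y_1^{(j)},y_2^{(j)})\in L$ with $(y_1^{(j)},y_2^{(j)})\to a$ (so eventually in $U$) together with $t_j\in T$ satisfying $(t_jy_1^{(j)},t_jy_2^{(j)})\to(z,z)\in\Delta_X$. Symmetrically, (b) at $b$ gives $(y_3^{(k)},y_4^{(k)})\in L\cap V$ and $s_k\in T$ with $(s_ky_3^{(k)},s_ky_4^{(k)})\to(w,w)\in\Delta_X$. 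Minimality of $\mathscr{X}$ yields $r\in T$ with $rz$ arbitrarily close to $w$. The candidate is $\sigma:=s_k^{-1}rt_j\in T$, which I propose to trace via
\[
\sigma(y_1^{(j)},y_2^{(j)})\ \approx\ s_k^{-1}r(z,z)\ =\ s_k^{-1}(rz,rz)\ \approx\ s_k^{-1}(w,w)\ \approx\ (y_3^{(k)},y_4^{(k)})\ \approx\ b.
\]
For appropriate $j,k,r$ the endpoint lies in $V$ while $(y_1^{(j)},y_2^{(j)})\in U$, giving $\sigma U\cap V\neq\emptyset$ as required.

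The main obstacle is justifying the last two approximations quantitatively: both rely on the uniform-continuity modulus $\omega_{s_k^{-1}}$, which varies with $k$, so no uniform error budget is available in advance. I plan to sequence the choices to circumvent this: first fix $s_k$ via the RP witness at $b$ (pinning down $\omega_{s_k^{-1}}$), then fix $r$ once $z$ becomes known (pinning down $\omega_r$), and only then refine the RP witness at $a$ so that the propagated error $\omega_{s_k^{-1}}(\omega_r(\eta_1)+\eta_r)+\omega_{s_k^{-1}}(\eta_s)$ falls below the $V$-neighborhood scale of $b$. AP-transitivity of $L$ enters as a supporting device: it guarantees that intermediate a.p.\ pairs produced from idempotent actions (e.g.\ pairs obtained by applying an element of $\mathbb{I}$ sending $x_1$ to $x_3$, together with the diagonal $\Delta_X\subset L$) genuinely sit inside $L$, so the construction never leaves $L$. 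The trickiest point will be the modulus bookkeeping in the final step; compactness of $X\times X$ and the fact that every finite composition of fixed group elements is uniformly continuous there are what make it go through.
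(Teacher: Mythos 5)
There is a genuine gap, and it sits exactly where you locate ``the trickiest point''. In your error budget the term $\omega_{s_k^{-1}}(\eta_s)$ cannot be made small: you must fix $k$ \emph{first} in order to know which modulus $\omega_{s_k^{-1}}$ you are working with, but once $k$ is fixed the quantity $\eta_s$ --- the distance from $s_k(y_3^{(k)},y_4^{(k)})$ to the diagonal point $(w,w)$ --- is a \emph{fixed} positive number, not a free parameter. The only way to shrink $\eta_s$ is to pass to a later index of the net, which changes $s_k$ and destroys the modulus you had pinned down; there is no a priori equicontinuity of the family $\{s_k^{-1}\}_k$ to fall back on (in the interesting, non-a.p.\ cases there cannot be). No reordering of the choices breaks this circularity. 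This is precisely the classical obstruction that makes the McMahon--Veech theorem a genuinely hard result rather than an $\varepsilon$-chase. A telling symptom is that your argument makes no essential use of the AP-transitivity of $L$ --- it enters only as a vague ``supporting device'' --- whereas it is a load-bearing hypothesis.

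The paper does not attempt a direct argument: Theorem~\ref{thm1} is obtained as the flow case of Theorem~\ref{thm2}, whose proof runs through Ellis's algebraic theory. One forms $\mathbb{L}=\{\alpha\in\mathbb{F}\,|\,(\pi_X\times\pi_X)\Gamma[\alpha]\subseteq L\}$, uses AP-transitivity of $L$ precisely to show that $\mathbb{L}$ is a $\tau$-closed \emph{subgroup} of $\mathbb{F}$, uses hypothesis (b) to show that every a.p.\ point of $L$ is of the form $(\pi_X\times\pi_X)(m,\alpha m)$ for some $\alpha$ in the derived group $\mathbb{L}^\prime$, and then invokes the key property of derived groups (\cite[Lem.~A.4.3]{CD20}): any two elements $\alpha,\gamma\in\mathbb{L}^\prime$ admit a \emph{single} net $\delta_j\in\mathbb{L}$ with $\delta_j\to_\tau\alpha$ and $\delta_j\to_\tau\gamma$ simultaneously. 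That simultaneous approximation is exactly the device that replaces your impossible modulus bookkeeping: the pairs $(\pi_X\times\pi_X)(m_j,\delta_j m_j)$ and $(\pi_X\times\pi_X)(n_j,\delta_j n_j)$ lie in one minimal subset of $L$ and land in $U$ and $V$ respectively, giving $N_T(U,V)\not=\emptyset$ without ever composing with an uncontrolled inverse. To close your proof you would need either to reproduce this $\tau$-topology argument or to bring in a relatively invariant measure as in McMahon's original approach; the direct topological chase does not go through.
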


Since $L$ is not necessarily an equivalence relation on $X$, so $T\times X/L\rightarrow X/L$, defined by $(t,L[x])\mapsto L[tx]$, need not be a well-defined phase mapping. In view of this, Theorem~\ref{thm1} is not a corollary of the classical McMahon-Veech theorem.
In fact, using Ellis's algebraic theory we can obtain the semiflow version of the above theorem as follows:

\begin{thm}[{cf.~\cite{CD20} for $L=\texttt{R}_\pi$}]\label{thm2}
Let $\pi\colon\mathscr{X}\rightarrow\mathscr{Y}$ be an extension of minimal compact semiflows. Suppose $L\subseteq \texttt{R}_\pi$ is a closed invariant AP-transitive relation on $X$ such that
\begin{enumerate}[(a)]
\item $L$ has a dense set of a.p. points and
\item $\texttt{RP}_{\pi|L}=L$.
\end{enumerate}
Then $(T,L)$ is an M-semiflow.
\end{thm}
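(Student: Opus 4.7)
Since hypothesis (a) already supplies a dense set of almost periodic points in $L$, showing that $(T,L)$ is an M-semiflow reduces to proving topological transitivity. My plan is to adapt the Ellis-semigroup proof of the McMahon-Veech theorem (established in \cite{CD20} for the case $L=\texttt{R}_\pi$) to the AP-transitive subrelation $L$, using AP-transitivity as a substitute for the equivalence-relation structure that $\texttt{R}_\pi$ naturally enjoys.

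Fix a minimal left ideal $\mathbb{I}\subseteq\beta T$ with idempotent set $\mathbb{J}$. Given nonempty open $U, V\subseteq L$, I would use (a) to pick a.p.\ pairs $\tilde{x}\in U$ and $\tilde{y}\in V$, and $u, v\in\mathbb{J}$ with $u\tilde{x}=\tilde{x}$ and $v\tilde{y}=\tilde{y}$. Applying hypothesis (b) to $\tilde{y}$, I obtain nets $\tilde{y}_j\to\tilde{y}$ in $L$ and $s_j\in T$ with $s_j\tilde{y}_j\to(z,z)\in\Delta_X\subseteq L$; passing to a subnet so that $s_j\to p\in\mathbb{I}$, the action of $p$ on $\mathscr{X}\times\mathscr{X}$ sends limits of pairs in $L$ to the diagonal. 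Since $\Delta_X$ is a minimal subset of $L$ (because $\mathscr{X}$ is minimal), the pair $(z,z)$ is itself a.p.\ in $L$.

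The central step is to use the algebra of $\mathbb{I}$ together with minimality of $\mathscr{Y}$ to produce a.p.\ pairs in $L$ sharing a common middle component, chaining $\tilde{x}$ through $(z,z)$ to a translate of $\tilde{y}$. Concretely, by picking an appropriate $q\in\mathbb{I}$ so that the fibre of $\pi$ over $\pi q z$ contains coordinates of both orbit closures, one exhibits a.p.\ pairs of the form $(x_*,z_*), (z_*,y_*)\in L$ with $x_*\in\overline{T\tilde{x}}$ and $y_*\in\overline{T\tilde{y}}$. AP-transitivity then forces $(x_*,y_*)\in L$, and approximating through the defining nets for (b) yields $t\in T$ with $t\tilde{x}$ arbitrarily close to $\tilde{y}$, i.e.\ inside $V$.

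The principal obstacle is the non-invertibility of elements of $T$ in the semiflow setting: one cannot transport information backward along orbits, so every algebraic manipulation must be performed inside the Ellis semigroup, exploiting that minimal idempotents act as identities on their fixed points and that $u, v, q$ need neither commute nor coincide. A secondary subtlety is ensuring that the intermediate chaining pairs remain in $L$ rather than merely in $\texttt{R}_\pi$---which is precisely what AP-transitivity guarantees, provided the bridging pairs are a.p., and this is why hypothesis (a) (and not just the closedness and invariance of $L$) is essential in the argument.
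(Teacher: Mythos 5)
There is a genuine gap, and it sits exactly at the point where topological transitivity has to be produced. Your final step asserts that the argument ``yields $t\in T$ with $t\tilde{x}$ arbitrarily close to $\tilde{y}$'' for the a.p.\ pairs $\tilde{x}\in U$, $\tilde{y}\in V$ you chose at the outset. That statement is false in general: since $L$ is reflexive and $\mathscr{X}$ is minimal, every point of $\Delta_X$ is an a.p.\ point of $(T,L)$, so you may well have picked $\tilde{x}\in\Delta_X\cap U$; then $\overline{T\tilde{x}}\subseteq\Delta_X$ and the orbit of $\tilde{x}$ never enters a $V$ that avoids the diagonal. (Theorem~\ref{3.5} produces exactly such $L$'s, with $\Delta_X\varsubsetneq L$ a proper minimal subset.) Topological transitivity requires exhibiting \emph{some} pair $p\in U$, generally not the a.p.\ point $\tilde{x}$ you started with, whose orbit meets $V$; your chaining argument never produces such a $p$. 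Relatedly, the chain $(x_*,z_*),(z_*,y_*)\in L\Rightarrow(x_*,y_*)\in L$ only certifies \emph{membership in $L$} of a composite pair, which is not the issue --- the issue is a dynamical one about orbits --- and you give no mechanism for the bridging pairs to lie in $L$ (rather than merely in $\texttt{R}_\pi$) in the first place; AP-transitivity cannot supply that, since it only propagates membership already known.

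The paper's proof supplies the missing mechanism through the $\tau$-topology on $\textrm{Aut}\,(\mathscr{I})$, which your outline never invokes. One sets $\mathbb{L}=\{\alpha\in\mathbb{F}\,|\,(\pi_X\times\pi_X)\Gamma[\alpha]\subseteq L\}$; AP-transitivity of $L$ is used precisely (and only) to make $\mathbb{L}$ a group, and closedness of $L$ makes it $\tau$-closed. Conditions (a) and (b) are then used to show that \emph{every} a.p.\ point of $L$ lies on a graph $(\pi_X\times\pi_X)\Gamma[\alpha]$ with $\alpha$ in the derived group $\mathbb{L}^\prime$: the regional proximality rel.\ $L$ produces a net $\xi\alpha_j\in\mathbb{L}$ converging in the $\tau$-topology simultaneously to $\xi\gamma$ and to $\textrm{id}_{\mathbb{I}}$. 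The payoff is the key property of derived groups (\cite[Lem.~A.4.3]{CD20}): for any $\alpha,\gamma\in\mathbb{L}^\prime$ there is a \emph{single} net $\delta_j\in\mathbb{L}$ with $\delta_j\to_\tau\alpha$ and $\delta_j\to_\tau\gamma$. This places a point of $U$ and a point of $V$ on one common minimal graph $(\pi_X\times\pi_X)\Gamma[\delta_j]\subseteq L$, and two points of a common minimal set each lie in the orbit closure of the other, whence $N_T(U,V)\not=\emptyset$. Without the passage to $\mathbb{L}^\prime$ and this simultaneous approximation, the argument does not close.
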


Before proving Theorem~\ref{thm2} we need to introduce some terms for our convenience.
Let $\textrm{Aut}\,(\mathscr{I})$ be the set of automorphisms of the universal minimal compact dynamic $\mathscr{I}=(T,\mathbb{I})$. Put
$$\Gamma[\alpha]=\{(m,\alpha m)\,|\,m\in\I\}\quad \forall \alpha\in\textrm{Aut}\,(\mathscr{I}).$$
The so-called \textit{$\tau$-topology} on $\textrm{Aut}\,(\mathscr{I})$ is defined as follows:

Let $\alpha_i\in\textrm{Aut}\,(\mathscr{I})$ be a net and let $\alpha\in\textrm{Aut}\,(\mathscr{I})$. We say $\alpha_i\to_\tau\alpha$ iff for every $m\in\I$ there exists a net $m_i\to m$ in $\I$ such that $\alpha_im_i\to\alpha m$ in $\I$ (cf.~\cite[Appendix~A]{CD20}).

Under the $\tau$-topology $\textrm{Aut}\,(\mathscr{I})$ is a compact $T_1$-space (cf.~\cite{G76, A88} and \cite[Prop.~A.2.5]{CD20}). Let $\mathbb{F}$ be a $\tau$-closed subgroup of $\textrm{Aut}\,(\mathscr{I})$; then its \textit{derived subgroup} is defined as follows:
$$
\mathbb{F}^\prime=\{\alpha\in\mathbb{F}\,|\,\exists\,\delta_i\in\mathbb{F}\textrm{ s.t. }\delta_i\to_\tau\alpha\ \&\ \delta_i\to_\tau \textrm{id}_\I\}
$$
(cf.~\cite[Def.~A.4.1]{CD20}). Here $\mathbb{F}^\prime$ measures clearly the degree to which the $\tau$-topology on $\mathbb{F}$ fails to be a Hausdorff space. Moreover, $\mathbb{F}^\prime$ is a $\tau$-closed normal subgroup of $\mathbb{F}$ (\cite[Lem.~A.4.2]{CD20}).

\begin{proof}[\textbf{Proof of Theorem~\ref{thm2}}]
At first we can construct a CD of minimal compact semiflows and extensions as follows:
\begin{diagram}
\mathscr{I}&\rTo^{\quad\pi_X\quad}&\mathscr{X}\\
&\rdTo_{\pi_Y}&\dTo_\pi\\
&&\mathscr{Y}
\end{diagram}
and set
$$
\mathbb{F}=\{\alpha\in \textrm{Aut}\,(\mathscr{I})\,|\,\pi_Y=\pi_Y\circ\alpha\};
$$
that is, the Ellis group of $\mathscr{Y}$ rel. $\pi_Y$. Let
$$
\mathbb{L}=\{\alpha\in\mathbb{F}\,|\,(\pi_X\times\pi_X)\Gamma[\alpha]\subseteq L\}.
$$
Notice that $\Gamma[\alpha]$ consists of a.p. points of $\texttt{R}_\pi$ for every $\alpha\in\mathbb{F}$. Since $L$ is an AP-transitive relation on $X$, $(\alpha^{-1}m,m)=(\alpha^{-1}m,\alpha(\alpha^{-1}m))\ \forall m\in\mathbb{I}$, and $\mathbb{F}$ is a group, hence $\mathbb{L}$ is a subgroup of $\mathbb{F}$. Moreover, $\mathbb{L}$ is $\tau$-closed. Indeed, if $\alpha_j\in\mathbb{L}$ and $\alpha_j\to_\tau\alpha$, then there exists a net $m_j\to m$ in $\mathbb{I}$ such that $\alpha_jm_j\to\alpha m$ so $(\pi_X\times\pi_X)(m,\alpha m)\in L$ and $\alpha\in\mathbb{L}$, for $L$ is closed. Thus, $\mathbb{L}$ is a $\tau$-closed subgroup of $\mathbb{F}$.

Let $(x,x^\prime)\in L$ be an a.p. point. There exists $u\in\mathbb{J}$ with $u(x,x^\prime)=(x,x^\prime)$. Since $(x,x^\prime)\in\texttt{RP}_{\pi|L}$ and $L$ has a dense set of a.p. points, by a standard argument (cf., e.g.,~\cite[Proof of Thm.~3.1.3]{CD20}) we can select $(m,m^\prime)$, $(m_j,m_j^\prime)$, $(n,n^\prime)$ in $\texttt{R}_{\pi_Y}\subseteq\mathbb{I}\times\mathbb{I}$ and $t_j\in T$ with
$$
(\pi_X\times\pi_X)(m,m^\prime)=(x,x^\prime),\ (\pi_X\times\pi_X)(m_j,m_j^\prime)\in L,\ \pi_X n=\pi_Xn^\prime,
$$
and
$$
(m_j,m_j^\prime)\to(m,m^\prime), \ t_j(m_j,m_j^\prime)\to(n,n^\prime)
$$
such that $(m,m^\prime)$ and $(m_j,m_j^\prime)$ are all a.p. under $(T,\mathbb{I}\times\mathbb{I})$. Then by regularity of $\mathscr{I}$, there exist $\alpha_j, \gamma\in\mathbb{L}$ such that
$$
m^\prime=\gamma m,\ m_j^\prime=\alpha_jm_j,\ (m_j,\alpha_jm_j)\to(m,\gamma m), \ (t_jm_j,\alpha_jt_jm_j)\to(n,n^\prime).
$$
Now since $\pi_Xn=\pi_Xn^\prime$, we can take $n^{\prime\prime}\in\mathbb{I}$ such that $(n^\prime,n^{\prime\prime})$ is a.p., $n$ is proximal with $n^{\prime\prime}$ and $\pi_Xn^{\prime\prime}=\pi_Xn^\prime$. Thus, there exists some $\xi\in\textrm{Aut}\,(\mathscr{I})$ with $n^{\prime\prime}=\xi n^\prime$. Then $\pi_X\xi n^\prime=\pi_Xn^\prime$ so $\pi_X\circ\xi=\pi_X$, $\xi\in\mathbb{L}$, and $(m_j,\xi\alpha_jm_j)\to(m,\xi\gamma m)$. Furthermore, there is a net $s_j\in T$ such that
$(s_jm_j,\xi\alpha_js_jm_j)\to(n,n)$. Thus, $\xi\alpha_j\to_\tau\xi\gamma$, $\xi\alpha_j\to_\tau\textrm{id}_\mathbb{I}$, $\xi\alpha_j\in\mathbb{L}$. So $\xi\gamma\in\mathbb{L}^\prime$.

To sum up, we have concluded that if $(x,x^\prime)\in L$ is a.p., then there exists an $\alpha\in\mathbb{L}$ with $(x,x^\prime)\in(\pi_X\times\pi_X)\Gamma[\alpha]$ such that there is a net $\delta_j\in\mathbb{L}$ with $\delta_j\to_\tau\alpha$ and $\delta_j\to_\tau\textrm{id}_\mathbb{I}$; that is, $\alpha\in\mathbb{L}^\prime$.

Since $\mathbb{L}^\prime$ is a $\tau$-closed subgroup (cf.~\cite[Lem.~A.4.2]{CD20}), hence if $\alpha,\gamma\in\mathbb{L}^\prime$, then there exists a net $\delta_j\in\mathbb{L}$ such that
$\delta_j\to_\tau\alpha$ and $\delta_j\to_\tau\gamma$ (cf.~\cite[Lem.~A.4.3]{CD20}).

Now let $\bar{x}$, $\bar{w}$ be two a.p. points in $L$, and, let $U$ and $V$ be two neighborhoods of $\bar{x}$ and $\bar{w}$ in $L$, respectively. Then we can take $\alpha,\gamma\in\mathbb{L}^\prime$ and $m,n\in\mathbb{I}$ such that $(\pi_X\times\pi_X)(m,\alpha m)=\bar{x}$ and $(\pi_X\times\pi_X)(n,\gamma n)=\bar{w}$. So there exists a net $\delta_j\in\mathbb{L}$ such that $\delta_j\to_\tau\alpha$ and $\delta\to_\tau\gamma$. Further, there are nets $m_j\to m$ and $n_j\to n$ in $\mathbb{I}$ such that $\delta_jm_j\to\alpha m$ and $\delta n_j\to\gamma n$. Since $(\pi_X\times\pi_X)(m_j,\delta_jm_j)$ and $(\pi_X\times\pi_X)(n_j,\delta_jn_j)$ lie in a same minimal subset of $L$, this implies that $N_T(U,V)\not=\emptyset$. As $L$ has already a dense set of a.p. points, $(T,L)$ is T.T. and an M-semiflow. The proof is complete.
\end{proof}

As a matter of fact, if $(T,L)$ is an M-semiflow, then conditions (a) and (b) in Theorem~\ref{thm2} are clearly fulfilled.

\subsection{Lyapunov stability and Bohr/regular almost periodicity}\label{s2.2}
In this part, let $\mathscr{X}$ be an arbitrary flow, not necessarily minimal and not necessarily compact, unless specified otherwise. We will now consider conditions under which an (regularly) a.p. point is Lyapunov stable.
\subsubsection{Basic definitions}\label{s2.2.1}
We first introduce some notions needed later.  Let $S$ and $A$ be subsets of $T$. Then $S$ is said to be (right) \textit{thick} if for all compact subset $K$ of $T$ there exists an element $t\in T$ such that $Kt\subseteq S$; $A$ is called a (right) \textit{syndetic subset} of $T$ if there exists a compact subset $K$ of $T$ such that $T=K^{-1}A$, or equivalently, $Kt\cap A\not=\emptyset\ \forall t\in T$.
It is easy to check
\begin{enumerate}[$\bullet$]
\item  \textit{A subset of $T$ is syndetic iff it intersects non-voidly every thick subset of $T$.}
\item \textit{If $T$ is an abelian group and $B$ syndetic and $S$ thick in $T$, then $T=BS$.}
\end{enumerate}

\noindent
Recall that:
\begin{enumerate}[I.]
\item A point $x\in X$ is \textit{almost periodic} (a.p.) under $\mathscr{X}$ iff $N_T(x,U)$ is a syndetic subset of $T$ for every $U\in\mathfrak{N}_x$ in $X$. Then $x$ is a.p. under $\mathscr{X}$ iff $x\in\overline{Sx}$ for all $S\subset T$ thick.
\item A point $x\in X$ is \textit{locally a.p.} under $\mathscr{X}$ iff for every $U\in\mathfrak{N}_x$ there exists a $V\in\mathfrak{N}_x$ and a syndetic subset $A$ of $T$ such that $AV\subseteq U$. We say $\mathscr{X}$ is \textit{locally a.p.} if it is pointwise locally a.p. under $\mathscr{X}$.
\item $\mathscr{X}$ is called \textit{a.p.} if for every $\alpha\in\mathscr{U}_X$ there exists a syndetic set $A$ in $T$ such that $Ax\subseteq\alpha[x]$ for all
$x\in X$. A point $x\in X$ is \textit{Bohr a.p.} under $\mathscr{X}$ if $(T,\overline{Tx})$ is an a.p. flow. We say $\mathscr{X}$ is \textit{pointwise Bohr a.p.} iff each point of $X$ is a Bohr a.p. point under $\mathscr{X}$.
\item $\mathscr{X}$ is called \textit{equicontinuous} if given $\epsilon\in\mathscr{U}_X$ and $x\in X$ there exists an index $\delta\in\mathscr{U}_X$ such that $t(\delta[x])\subseteq\epsilon[tx]$ for all $t\in T$. $\mathscr{X}$ is said to be \textit{uniformly equicontinuous} if for all $\varepsilon\in\mathscr{U}_X$ there exists an index $\delta\in\mathscr{U}_X$ with $T\delta\subseteq\varepsilon$.
\item $\mathscr{X}$ is \textit{regularly a.p.} at $x\in X$, or $x$ is a \textit{regularly a.p. point} under $\mathscr{X}$, iff $N_T(x,U)$ contains a syndetic normal closed subgroup of $T$ for all $U\in\mathfrak{N}_x$. We say $\mathscr{X}$ is \textit{regularly a.p.} iff given $\varepsilon\in\mathscr{U}_X$ there exists a syndetic normal closed subgroup $A$ of $T$ such that  $Ax\subseteq\varepsilon[x]$ for all $x\in X$. $\mathscr{X}$ is called \textit{point-regularly a.p.} iff there exists a regularly a.p. point $x$ such that $\overline{Tx}=X$.
\end{enumerate}

If $X$ is compact, then $x\in X$ is a.p. under $\mathscr{X}$ iff $\overline{Tx}$ is minimal under $\mathscr{X}$; and, $\mathscr{X}$ is a.p. iff it is uniformly equicontinuous iff it is equicontinuous iff $\texttt{RP}(\mathscr{X})=\Delta_X$ (see Lemma~\ref{l2.3}).

We say that
\begin{enumerate}[i.]
\item $\mathscr{X}$ is \textit{thickly stable} iff for every index $\varepsilon\in\mathscr{U}_X$ and all point $x\in X$ there exists an index $\delta\in\mathscr{U}_X$ and a thick subset $S$ of $T$ such that $s(\delta[x])\subseteq\varepsilon[sx]$ for all $s\in S$. If here $\delta$ and $S$ are independent of the choice of $x$, then $\mathscr{X}$ is called \textit{uniformly thickly stable}.
\item $\mathscr{X}$ is \textit{thickly regularly stable} iff for every index $\varepsilon\in\mathscr{U}_X$ and all point $x\in X$ there exists an index $\delta\in\mathscr{U}_X$ and a thick subsemigroup $S$ of $T$ such that $s(\delta[x])\subseteq\varepsilon[sx]$ for all $s\in S$.
\item $\mathscr{X}$ is \textit{uniformly thickly regularly stable} iff for every index $\varepsilon\in\mathscr{U}_X$ there exists an index $\delta\in\mathscr{U}_X$ and a thick subsemigroup $S$ of $T$ such that $S\delta\subseteq\varepsilon$.
\item $\mathscr{X}$ is \textit{syndetically stable} provided that for every index $\varepsilon\in\mathscr{U}_X$ and all point $x\in X$ there exists an index $\delta\in\mathscr{U}_X$ and a syndetic subset $S$ of $T$ such that $s(\delta[x])\subseteq\varepsilon[sx]$ for all $s\in S$. $\mathscr{X}$ is \textit{uniformly syndetically stable} iff for every index $\varepsilon\in\mathscr{U}_X$ there exists an index $\delta\in\mathscr{U}_X$ and a syndetic subset $S$ of $T$ such that $S\delta\subseteq\varepsilon$.
\item $\mathscr{X}$ is \textit{syndetically distal} if for every $x\in X$ and $\varepsilon\in\mathscr{U}_X$ there exists a syndetic subset $S$ of $T$ and an index $\delta\in\mathscr{U}_X$ such that if $y\notin\varepsilon[x]$ then $ty\notin\delta[tx]$ for all $t\in S$. $\mathscr{X}$ is \textit{uniformly syndetically distal} if for every $\varepsilon\in\mathscr{U}_X$ there exists a syndetic subset $S$ of $T$ and an index $\delta\in\mathscr{U}_X$ such that if $y\notin\varepsilon[x]$ then $ty\notin\delta[tx]$ for all $t\in S$ and $x\in X$.
\end{enumerate}
Let $S\subseteq T$ and $X_0\subseteq X$ be nonempty sets. Following \cite[Def.~2]{E} and \cite[Def.~2.8.6 and Def.~2.8.11]{B79} we say that:
\begin{enumerate}[a.]
\item $\mathscr{X}$ is \textit{Lyapunov $S$-stable} iff for every index $\varepsilon\in\mathscr{U}_X$ and all $x\in X$ there exists an index $\delta\in\mathscr{U}_X$ such that $t(\delta[x])\subseteq\varepsilon[tx]$ for all $t\in S$.
\item $\mathscr{X}$ is \textit{uniformly Lyapunov $S$-stable} iff for every index $\varepsilon\in\mathscr{U}_X$ there exists an index $\delta\in\mathscr{U}_X$ such that $S\delta\subseteq\varepsilon$.
\item $\mathscr{X}$ is \textit{Lyapunov $S$-stable at $L\subseteq X$ w.r.t. $X_0$} iff for every index $\varepsilon\in\mathscr{U}_X$ and all $x\in L$ there exists an index $\delta\in\mathscr{U}_X$ such that $t(\delta[x]\cap X_0)\subseteq\varepsilon[tx]$ for all $t\in S$.
\end{enumerate}

Clearly the uniformly Lyapunov $T$-stable is exactly the uniformly equicontinuous as in Def.~IV. If $X$ is compact, then Def.~a and Def.~b are equivalent.

Notice that comparing with ``Lyapunov $S$-stable'' and ``uniformly Lyapunov $S$-stable'' of England~\cite[Def.~2]{E} and Bronstein~\cite[2.8.6 and 2.8.11]{B79}, $S$ may vary with $\varepsilon$ in our thick and syndetic stability cases i., ii., iii., iv., and v. above. In addition, if $T$ is not abelian, the inverse of a syndetic (resp.~thick) set need not be syndetic (resp.~thick).

The following important result is due to W.H. Gottschalk. However, we will give an alternative simple proof here for reader's convenience.

\begin{lem}[{cf.~\cite{Go}; also \cite[Thm.~1.6.23]{B79} and \cite[Thm.~2.2]{A88}}]\label{l2.3}
A compact flow $\mathscr{X}$ is equicontinuous (or equivalently, Lyapunov $T$-stable) iff it is an a.p. flow.
\end{lem}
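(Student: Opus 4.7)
The equivalence of ``equicontinuous'' and ``Lyapunov $T$-stable'' in the lemma is immediate from Def.~IV and Def.~a (with $S = T$), so the substantive content is the equivalence with the a.p.\ property.

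For the direction a.p.\ $\Rightarrow$ equicontinuous, the plan is a pure uniform-structure manipulation. Given $\varepsilon \in \mathscr{U}_X$, use the compact kernel $K$ arising from syndeticity ($T = K^{-1}A$) and the joint continuity of the action to choose $\eta \in \mathscr{U}_X$ with $K^{-1}\eta \subseteq \varepsilon$, and then a symmetric $\alpha$ with $\alpha^3 \subseteq \eta$. The a.p.\ hypothesis at level $\alpha$ furnishes a syndetic $A \subseteq T$ with $Ax \subseteq \alpha[x]$ for every $x \in X$. For an arbitrary $t \in T$ write $t = k^{-1}a$ with $k \in K$, $a \in A$; then for $(x,y) \in \alpha$ the three links $(ax,x), (x,y), (y,ay)$ all lie in $\alpha$, so $(ax,ay) \in \alpha^3 \subseteq \eta$, and finally $(tx,ty) = (k^{-1}ax, k^{-1}ay) \in K^{-1}\eta \subseteq \varepsilon$. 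This already yields the uniform form $T\alpha \subseteq \varepsilon$, hence equicontinuity.

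For the reverse direction equicontinuous $\Rightarrow$ a.p., the plan is to work in the enveloping semigroup $E = E(\mathscr{X})$, the closure of $\{t|_X : t \in T\}$ in $X^X$. On a compact space, equicontinuity promotes to uniform equicontinuity, and this is well-known to force $E$ to be a compact topological group of homeomorphisms (in the uniform topology), with $T$ having dense image in $E$. Given $\varepsilon \in \mathscr{U}_X$, pick a symmetric $\varepsilon'$ with $(\varepsilon')^2 \subseteq \varepsilon$, and set $V = \{g \in E : (gx,x) \in \varepsilon' \; \forall x \in X\}$, an open symmetric neighborhood of the identity. Compactness gives a finite cover $E = \bigcup_{i=1}^n g_i V$, and density of $T$ lets one select $t_i \in T \cap g_i V$. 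For any $t \in T$ choose $i$ with $t \in g_i V$; then $t_i^{-1} t \in V^{-1} V = V^2$, so $((t_i^{-1} t) x, x) \in (\varepsilon')^2 \subseteq \varepsilon$ for every $x$. Therefore $A := \{s \in T : sx \in \varepsilon[x] \; \forall x\}$ contains $t_i^{-1} t$ for a suitable $i$, so $T = K^{-1} A$ with $K = \{t_1^{-1}, \ldots, t_n^{-1}\}$ compact, which is exactly the a.p.\ condition.

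The main obstacle is the $(\Rightarrow)$ direction: the a.p.\ property demands a \emph{single} syndetic set $A$ working uniformly in $x \in X$, whereas equicontinuity only produces, for each individual $x$, a syndetic return set depending on the point. The key leverage is the full topological-group structure of $E(\mathscr{X})$, which is compact under equicontinuity: a finite covering of $E$ by translates of one open neighborhood of the identity transfers, via the dense map $T \to E$, to a covering of $T$ by translates of the desired uniform return set $A$.
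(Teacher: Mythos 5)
Your converse direction (equicontinuous $\Rightarrow$ a.p.) is correct, though it takes a genuinely different route from the paper: you pass to the enveloping semigroup $E(\mathscr{X})$, which under equicontinuity is a compact topological group, and cover it by finitely many translates $g_iV$ of a symmetric neighborhood of the identity; the paper instead covers $X$ itself by finitely many cells $\delta[x_i]$, uses distality of $\mathscr{X}$ (hence of $X^n$) to see that $(x_1,\dotsc,x_n)$ is an a.p.\ point of $X^n$, and takes $A$ to be its syndetic return-time set. Both arguments lean on a standard but nontrivial input (for you, that $E(\mathscr{X})$ is a compact group of homeomorphisms in the uniform topology; for the paper, that a distal compact flow is pointwise a.p.), and both deliver the single syndetic set $A$ working uniformly in $x$.

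The forward direction (a.p.\ $\Rightarrow$ equicontinuous) as you wrote it is circular. You choose $\eta$ with $K^{-1}\eta\subseteq\varepsilon$, then $\alpha$ with $\alpha^3\subseteq\eta$, and only then invoke almost periodicity at level $\alpha$ to produce the syndetic set $A$ --- but $K$ is by definition the compact set witnessing the syndeticity of that very $A$, so $K$ is not available until after $\alpha$ (hence after $\eta$) has been fixed. Since the syndetic set, and therefore its compact kernel, genuinely depends on the level at which the a.p.\ hypothesis is applied, the quantifiers cannot be arranged in the order your chain $(tx,ty)=(k^{-1}ax,k^{-1}ay)\in K^{-1}\eta\subseteq\varepsilon$ requires. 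The gap is fixable: for symmetric $\alpha$ the maximal return set $A_\alpha=\{t\,|\,(tx,x)\in\alpha\ \forall x\in X\}$ satisfies $A_\alpha=A_\alpha^{-1}$ (substitute $x=t^{-1}y$ and use symmetry of $\alpha$), so $T=K^{-1}A_\alpha$ yields $T=A_\alpha K$; writing $t=ak$ and choosing $\delta$ with $K\delta\subseteq\alpha$ \emph{last}, one gets $(tx,ty)=(a(kx),a(ky))\in\alpha^3\subseteq\varepsilon$ with a legitimate order of choices --- this left-syndeticity trick is exactly the mechanism of the paper's Theorem \ref{t2.10}f. The paper itself sidesteps the issue by a different route: a.p.\ implies uniform syndetic stability and distality, hence $\texttt{RP}(\mathscr{X})=\texttt{P}(\mathscr{X})=\Delta_X$, and on a compact flow $\texttt{RP}(\mathscr{X})=\bigcap_{\alpha\in\mathscr{U}_X}\overline{T\alpha}$, so $\texttt{RP}(\mathscr{X})=\Delta_X$ forces $T\alpha\subseteq\varepsilon$ for some $\alpha$ by compactness.
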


\begin{proof}
Assume $\mathscr{X}$ is an a.p. flow. Clearly, it is uniformly syndetically stable and distal so that $\texttt{RP}(\mathscr{X})=\texttt{P}(\mathscr{X})=\Delta_X$. Thus $\mathscr{X}$ is equicontinuous for $\texttt{RP}(\mathscr{X})=\bigcap_{\alpha\in\mathscr{U}_X}\overline{T\alpha}$ and $X$ compact.
Conversely, suppose $\mathscr{X}$ is equicontinuous; and then, $\mathscr{X}$ is distal for $\texttt{P}(\mathscr{X})=\Delta_X$. Let $\varepsilon\in\mathscr{U}_X$ and $\beta\in\mathscr{U}_X$ with $\beta^3\subset\varepsilon$. Then there exists an index $\delta\in\mathscr{U}_X$ with $\delta\subset\beta$ such that $T\delta\subset\beta$. Since $X$ is compact, there exists a finite set $\{x_1,\dotsc,x_n\}$ in $X$ such that $X=\delta[x_1]\cup\dotsm\cup\delta[x_n]$. As $(x_1,\dotsc,x_n)$ is an a.p. point in $X^n$, there is a syndetic subset $A$ of $T$ such that $A(x_1,\dotsc,x_n)\subseteq\delta[x_1]\times\dotsm\times\delta[x_n]$. Now for $y\in X$ we have for some $i$ with $1\le i\le n$ that
$(y,x_i)\in\delta$, $A(y,x_i)\subset\beta$ and $Ax_i\subset\delta[x_i]$.
Thus $Ay\subset\beta^3[y]$. Hence $Ay\subset\varepsilon[y]$ for all $y\in X$. The proof is complete.
\end{proof}
\subsubsection{Thick stability}\label{s2.2.2}
Uniform thick stability was investigated in \cite{Ba, C1} and \cite[$\S$1.6]{B79} for compact flows. Here we will now consider the nonuniform noncompact case.

\begin{lem}\label{l2.4}
Let $\mathscr{X}$ be a minimal flow and $S$ a$\{$ thick$\}$$\{$syndetic$\}$ subset of $T$. If $\mathscr{X}$ is Lyapunov $S$-stable at some point $x\in X$ w.r.t. $X$, then $\mathscr{X}$ is $\{$thickly$\}$$\{$syndetically$\}$ stable.
\end{lem}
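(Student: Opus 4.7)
The plan is to transport the Lyapunov $S$-stability from the single point $x$ to an arbitrary $y\in X$ by using minimality to pull $y$ close to $x$ under the phase action, and then to right-translate $S$ so that the resulting witness set inherits the relevant largeness property (thickness or syndeticity). The statement is really two statements, but both are handled uniformly by a single argument, since the key translation step preserves both properties.

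Fix $y\in X$ and $\varepsilon\in\mathscr{U}_X$. I would first pick a symmetric $\alpha\in\mathscr{U}_X$ with $\alpha\circ\alpha\subseteq\varepsilon$, and apply the hypothesis of Lyapunov $S$-stability at $x$ w.r.t.~$X$ to obtain $\delta_1\in\mathscr{U}_X$ with $s(\delta_1[x])\subseteq\alpha[sx]$ for every $s\in S$. Next choose a symmetric $\eta\in\mathscr{U}_X$ with $\eta\circ\eta\subseteq\delta_1$ (so in particular $\eta\subseteq\delta_1$). By minimality of $\mathscr{X}$ pick $t_1\in T$ with $t_1y\in\eta[x]$, and then use continuity of the homeomorphism $z\mapsto t_1z$ to find $\delta\in\mathscr{U}_X$ with $t_1(\delta[y])\subseteq\eta[t_1y]$. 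From $t_1y\in\eta[x]$ together with $\eta\circ\eta\subseteq\delta_1$ and symmetry of $\eta$ one gets $\eta[t_1y]\subseteq\delta_1[x]$, whence $t_1(\delta[y])\subseteq\delta_1[x]$.

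With these choices in hand, for every $z\in\delta[y]$ and $s\in S$ the $S$-stability at $x$ gives both $st_1z\in\alpha[sx]$ (because $t_1z\in\delta_1[x]$) and $st_1y\in\alpha[sx]$ (because $t_1y\in\eta[x]\subseteq\delta_1[x]$), so $(st_1y,st_1z)\in\alpha\circ\alpha\subseteq\varepsilon$. Setting $S^\prime=St_1$, this says exactly that $s^\prime(\delta[y])\subseteq\varepsilon[s^\prime y]$ for all $s^\prime\in S^\prime$. The final bookkeeping step is to verify that $S^\prime$ is thick (resp.~syndetic) whenever $S$ is: given compact $K\subseteq T$ and $t\in T$ with $Kt\subseteq S$, one has $K(tt_1)\subseteq St_1$; and if $T=K^{-1}S$, then $T=Tt_1=K^{-1}St_1$.

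The conceptual content is minimality plus continuity of the individual phase maps plus the one-point stability hypothesis; the only mild obstacle is the entourage bookkeeping, specifically choosing $\eta$ small enough so that continuity of $t_1$ drags an entire $\delta[y]$ into $\delta_1[x]$ rather than merely into a neighborhood of $t_1y$. Note that one cannot in general expect to take $S^\prime = S$ itself (since the shift by $t_1$ is forced by the change of basepoint), but this is harmless because right translation preserves both thickness and syndeticity.
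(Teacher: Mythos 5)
Your argument is correct and complete: transporting the one-point stability to an arbitrary $y$ via minimality and continuity of the single homeomorphism $z\mapsto t_1z$, and replacing $S$ by the right translate $St_1$ (which preserves thickness and syndeticity since $T$ is a group), is exactly the argument the paper suppresses by writing ``Obvious.'' The only point worth flagging is the one you already flag yourself — that the witness set is allowed to depend on the base point and on $\varepsilon$ — and this is indeed permitted by the paper's definitions of thick/syndetic stability (the paper explicitly notes that $S$ may vary with $\varepsilon$, in contrast with England's and Bronstein's uniform notions).
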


\begin{proof}
Obvious.
\end{proof}

\begin{thm}\label{t2.5}
Let $\mathscr{X}$ be a thickly stable flow with $T$ an abelian group. If $x\in X$ is an a.p. point under $\mathscr{X}$, then $(T,\overline{Tx})$ is an a.p. subflow of $\mathscr{X}$ with discrete phase group $T$, and moreover, $(T,\overline{Tx})$ is an equicontinuous subflow of $\mathscr{X}$.
\end{thm}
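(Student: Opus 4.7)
The plan is to establish, for each $\varepsilon\in\mathscr{U}_X$, a single syndetic subset $B\subseteq T$ satisfying $By\subseteq\varepsilon[y]$ for every $y\in\overline{Tx}$; this is exactly the a.p.-flow condition (Def.~III in \S\ref{s2.2.1}) for the subflow $(T,\overline{Tx})$. The abelianness of $T$ is to be invoked twice: first to commute a return-time $b$ past a stability-time $s$, and second to translate neighborhoods around an arbitrary $tx$ back to neighborhoods of $x$.

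Fix $\varepsilon\in\mathscr{U}_X$ and a closed symmetric $\varepsilon_1\in\mathscr{U}_X$ with $\varepsilon_1\circ\varepsilon_1\subseteq\varepsilon$. First I would apply thick stability at $x$ with index $\varepsilon_1$ to produce $\delta\in\mathscr{U}_X$ (shrinkable into $\varepsilon_1$) and a thick $S\subseteq T$ with $s\delta[x]\subseteq\varepsilon_1[sx]$ for every $s\in S$; by a.p.\ of $x$, the return-time set $B:=N_T(x,\delta[x])$ is syndetic. For $b\in B$ and $s\in S$, abelianness gives $bsx=sbx\in s\delta[x]\subseteq\varepsilon_1[sx]$, so $(sx,bsx)\in\varepsilon_1$ for all such $b,s$. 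This is stability of $B$-translates along the thick orbit $Sx$, which I then need to propagate to all of $\overline{Tx}$.

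The key step is to show $Sx$ is dense in $\overline{Tx}$. For $t\in T$ and $V\in\mathfrak{N}_{tx}$, the continuity of the $t$-action yields $W\in\mathfrak{N}_x$ with $tW\subseteq V$; the inclusion $N_T(x,V)\supseteq tN_T(x,W)$ exhibits $N_T(x,V)$ as a translate of a syndetic set, hence syndetic, and therefore meeting the thick set $S$ (as noted in \S\ref{s2.2.1}, a syndetic set intersects every thick set). A witness $s\in S\cap N_T(x,V)$ satisfies $sx\in V$, so $\overline{Tx}=\overline{Sx}$. For $y\in\overline{Tx}$ pick a net $s_j\in S$ with $s_jx\to y$; continuity of the $b$-action gives $bs_jx\to by$, and the closedness of $\varepsilon_1$ combined with $(s_jx,bs_jx)\in\varepsilon_1$ yields $(y,by)\in\varepsilon_1\subseteq\varepsilon$. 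Hence $By\subseteq\varepsilon[y]$ for every $y\in\overline{Tx}$, so $(T,\overline{Tx})$ is an a.p.\ subflow. The discrete-phase-group statement is then a rephrasing, since $B$ was built purely set-theoretically; equicontinuity of the subflow follows by decomposing each $t$ as $k^{-1}b$ via syndeticity of $B$ and invoking continuity of the individual $k^{-1}$-maps on $\overline{Tx}$ together with the uniform estimate just established.

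The main obstacle is the density step. Thick stability alone controls only the $B$-translates along $Sx$, and the upgrade to $\overline{Tx}$ requires $Sx$ to be dense; this in turn demands converting the syndetic return-time set $N_T(x,W)$ at $x$ into a syndetic return-time set at $tx$, which is precisely the second place where the abelian group structure is essential. Without abelianness one would lack the translation invariance of syndetic sets needed to make this transfer, and the bridge from the thick control at $x$ to a syndetic control on the whole orbit closure would break.
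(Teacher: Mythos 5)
Your argument for the almost periodicity of $(T,\overline{Tx})$ is correct but follows a genuinely different route from the paper's. The paper fixes a syndetic set $A$ with $\overline{Ax}\subseteq\delta[x]$ and proves $Atx\subseteq\varepsilon[tx]$ for \emph{every} $t\in T$ directly, by writing $t=bs$ via the decomposition $T=BS$ ($B$ a second syndetic return-time set, $S$ the thick stability set) and pushing the estimate through $atx=(b^{-1}t)(ab)x$; no limit passage occurs. You instead verify the estimate only along the thick sub-orbit $Sx$ (where it is immediate from $bsx=sbx\in s\delta[x]\subseteq\varepsilon_1[sx]$), prove $\overline{Sx}=\overline{Tx}$ by intersecting the syndetic set $N_T(x,V)\supseteq tN_T(x,W)$ with the thick set $S$, and close up with a closed entourage. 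This is arguably cleaner and isolates the one genuine use of commutativity; what each approach buys is a trade-off between an algebraic orbit-point computation (paper) and a topological density-plus-limit argument (yours). Note, however, that your claimed second essential use of abelianness is not one: a left translate $tA$ of a right-syndetic set is right-syndetic in any group (replace the compact set $K$ by $tK$), so the density step needs no commutativity.

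Two smaller points. The equicontinuity sentence as written has an ordering problem: decomposing $t=k^{-1}b$ and applying $b$ first forces you to apply $k^{-1}$ at the points of $b\delta[x_0]$, which roam over the whole (possibly non-compact) orbit closure, and pointwise continuity of the maps $k^{-1}$ is not enough there. The paper avoids this by writing $T=AK$ and applying the compact part $k$ \emph{first}, at the fixed point $x_0$ (joint continuity plus compactness of $K$ give one $\delta$ with $k\delta[x_0]\subseteq\delta_1[kx_0]$ for all $k\in K$), and the uniform syndetic estimate $A\delta_1\subseteq\varepsilon$ second; your argument is repaired the same way by commuting to $t=bk^{-1}$. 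Finally, the discrete-phase-group clause is not merely a rephrasing: syndetic for the given topology on $T$ (compact $K$) and discretely syndetic (finite $K$) are a priori different, and the paper deals with this by an explicit reduction to $x$ being a.p.\ for $T$ discrete, which your write-up would also need to invoke.
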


\begin{proof}
Since $X$ is a regular space so that the orbit closure of an a.p. point is minimal (cf.~\cite[Thm.~1.5.3]{B79}), we can assume $\mathscr{X}=(T,\overline{Tx})$ is minimal without loss of generality. Moreover, we may assume $x$ is a.p. under $\mathscr{X}$ with $T$ a discrete group.

Let $\varepsilon,\alpha\in\mathscr{U}_X$ with $\alpha^2\subseteq\varepsilon$. Since $\mathscr{X}$ is thickly stable at $x$ by hypothesis, there exists an index $\gamma\in\mathscr{U}_X$ and a thick subset $S$ of $T$ such that $s(\gamma[x])\subseteq\alpha[sx]$ for all $s\in S$. Take an index $\delta\in\mathscr{U}_X$ with $\delta^2\subseteq\gamma$. Since $x$ is a.p. under $\mathscr{X}$ and $T$ is a topological group, there exists a discretely syndetic subset $A$ of $T$ such that $\overline{Ax}\subseteq\delta[x]$. To show that $\mathscr{X}$ is a.p., it suffices to prove that $Atx\subseteq\varepsilon[tx]$ for all $t\in T$.
For this, let $t\in T$, $a\in A$. There exists an index $\sigma\in\mathscr{U}_X$ with $\sigma\subset\delta$ such that $a(\sigma[x])\subseteq\delta[ax]$. Then there is a syndetic subset $B$ of $T$ such that $Bx\subseteq\sigma[x]$. By $T=BS$, there are elements $b\in B$ and $s\in S$ such that $b^{-1}t=s$. By $bx\in Bx\subseteq\sigma[x]$, it follows that $abx\in\delta[ax]\subseteq\gamma[x]$. By $b^{-1}t=s\in S$ and commutativity of $T$, it follows that
$atx=(b^{-1}t)(ab)x\in\alpha[b^{-1}tx]$. Moreover, $b\in B$ implies $bx\in\delta[x]\subseteq\gamma[x]$. Hence $tx=(b^{-1}t)bx\in\alpha[b^{-1}tx]$. Therefore, $(atx,tx)\in\alpha^2\subseteq\varepsilon$ for all $t\in T$ and all $a\in A$.

Next we shall prove that $\mathscr{X}$ is equicontinuous. Let $\varepsilon\in\mathscr{U}_X$ there exists an index $\delta_1\in\mathscr{U}_X$ and a syndetic set $A\subseteq T$ such that $A\delta_1\subseteq\varepsilon$. Select a compact set $K\subseteq T$ such that $KA=T=AK$ (for $T$ is abelian). Given $x_0\in X$, there exists an $\delta\in\mathscr{U}_X$ such that $k\delta[x_0]\subseteq\delta_1[kx_0]$ for all $k\in K$. Thus, $t\delta[x_0]\subseteq\varepsilon[tx_0]$ for all $t\in T$ and then $\mathscr{X}$ is equicontinuous.
The proof is complete.
\end{proof}

\begin{t2.5a}
If $\mathscr{X}$ is a locally a.p. thickly stable compact flow with $T$ abelian, then $\mathscr{X}$ is an a.p. flow.
\end{t2.5a}

\begin{proof}
Since $\mathscr{X}$ is locally a.p., $\mathscr{X}$ is pointwise a.p. and $\texttt{P}(\mathscr{X})=\texttt{RP}(\mathscr{X})$. Then $\texttt{RP}(\mathscr{X})=\Delta_X$ by Theorem~\ref{t2.5}. This together with Lemma~\ref{l2.3} proves Corollary~\ref{t2.5}a.
\end{proof}

\begin{t2.5b}[{cf.~\cite[Thm.~2]{E} or \cite[Thm.~2.8.8]{B79}}]
Let $\mathscr{X}$ be a flow with $T$ abelian. Let $S$ be a thick subset of $T$ and $x\in X$ an a.p. point under $\mathscr{X}$. If $\mathscr{X}$ is Lyapunov $S$-stable at $x$ w.r.t. $Tx$, then $x$ is a Bohr a.p. point under $\mathscr{X}$.
\end{t2.5b}

\begin{proof}
By Lemma~\ref{l2.4} and Theorem~\ref{t2.5}.
\end{proof}

When $T$ is not necessarily an abelian topological group, we can conclude the following result using ``$x$ regularly a.p.'' instead of ``$x$ a.p.'' under $\mathscr{X}$:

\begin{thm}\label{t2.6}
Let $\mathscr{X}$ be a flow with $X$ a Baire space. If $\mathscr{X}$ is thickly stable and it is regularly a.p. at some point $x\in X$, then $(T,\overline{Tx})$ is a pointwise regularly a.p. and equicontinuous subflow of $\mathscr{X}$.
\end{thm}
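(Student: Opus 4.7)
My approach mirrors the argument of Theorem~\ref{t2.5}, replacing commutativity of $T$ with the normal-subgroup structure afforded by regular almost periodicity. The goal is to produce, for each $\varepsilon \in \mathscr{U}_X$, a single syndetic closed normal subgroup $H \le T$ with $Hy \subseteq \varepsilon[y]$ for every $y \in M := \overline{Tx}$; this makes $(T, M)$ regularly a.p.\ as a flow, hence both pointwise regularly a.p.\ and equicontinuous.

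\textbf{Reduction and decomposition.} Since $x$ is regularly a.p.\ (hence a.p.) and $X$ is regular Hausdorff, $M := \overline{Tx}$ is a minimal set under $\mathscr{X}$, so we may assume $\mathscr{X} = (T, M)$. Fix $\varepsilon \in \mathscr{U}_X$ and pick $\alpha \in \mathscr{U}_X$ with $\alpha^2 \subseteq \varepsilon$. Thick stability at $x$ provides $\delta \in \mathscr{U}_X$ and a right-thick $S \subseteq T$ with $s\delta[x] \subseteq \alpha[sx]$ for $s \in S$, and regular a.p.\ at $x$ provides a syndetic closed normal subgroup $H \le T$ with $Hx \subseteq \delta[x]$. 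The non-abelian analogue of ``$T = BS$'' from Theorem~\ref{t2.5} reads $T = HS$ and holds because: (i) for any $t \in T$, the translate $St^{-1}$ is right-thick, since $Lt_L \subseteq S$ implies $L(t_Lt^{-1}) \subseteq St^{-1}$; and (ii) any syndetic subgroup $H$ meets every right-thick $S' \subseteq T$, for picking compact $K$ with $T = K^{-1}H$ and $t_\ast$ with $Kt_\ast \subseteq S'$, syndeticity yields $k \in K$ with $kt_\ast \in H \cap S'$. Hence $H \cap St^{-1} \ne \emptyset$ for every $t \in T$, which rearranges to $t \in HS$.

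\textbf{Uniform estimate via normality.} For $y = tx \in Tx$, write $t = bs$ with $b \in H$, $s \in S$. For any $h \in H$, normality of $H$ gives $Hs = sH$ and hence
\[
hy = (hb)(sx) \in H(sx) = s(Hx) \subseteq s\delta[x] \subseteq \alpha[sx],
\]
and likewise $y = b(sx) \in s(Hx) \subseteq \alpha[sx]$, so $(hy, y) \in \alpha^2 \subseteq \varepsilon$. Thus $Hy \subseteq \varepsilon[y]$ for all $y \in Tx$. Taking $\varepsilon$ closed (permissible in a Hausdorff uniform space) and passing to the limit using continuity of each $h \in H$ extends the bound to all $y \in M$. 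Therefore $(T, M)$ is regularly a.p.\ as a flow, so in particular pointwise regularly a.p.; equicontinuity then follows along the lines of the final paragraph of the proof of Theorem~\ref{t2.5}, using $T = KH$ for a compact $K$ together with continuity of the action at a base point.

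\textbf{Main obstacle.} The crux is establishing $T = HS$ for non-abelian $T$; once it is in hand, normality of $H$ is precisely what makes the sandwich $H(sx) = s(Hx)$ work and replaces commutativity in the computation of Theorem~\ref{t2.5}. The Baire-space hypothesis on $X$ plays a background role, supporting the minimality reduction and the closure-limit passage from $Tx$ to $\overline{Tx}$.
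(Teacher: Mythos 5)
Your proposal is correct, and it reaches the conclusion by a genuinely different route than the paper's. The paper's proof aims first at equicontinuity at $x$: it fixes a syndetic set $A$ with $\overline{Ax}\subseteq\delta[x]$, proves $tax\in\varepsilon[tx]$ for all $t\in T$ and $a\in A$ by writing $t=sb^{-1}$ with $s\in S$ and $b$ in an auxiliary syndetic normal closed subgroup $B$ chosen separately for each $a$ (so that $aBx\subseteq\gamma[x]$, using $b^{-1}a\in aB$ by normality and $tB\cap S\not=\emptyset$), and then invokes the Baire hypothesis to get $\texttt{int}\,\overline{Ax}\not=\emptyset$ so that this estimate controls a genuine neighborhood; pointwise regular almost periodicity on $\overline{Tx}$ is only extracted at the very end from Lyapunov stability. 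You instead make a single normal subgroup per entourage the central object and prove the stronger, uniform statement that $(T,\overline{Tx})$ is a regularly a.p.\ flow in the sense of Def.~V, via $T=HS$ and the coset identity $H(sx)=s(Hx)$; this buys a cleaner argument which, as far as I can see, never uses the Baire hypothesis at all. Two small points to tighten: (i) in the equicontinuity step you need the decomposition $t=hk$ with $k$ ranging over a compact set, i.e.\ $T=HK$ (available because $H=H^{-1}$), rather than $T=KH$ as written --- the compact factor must act first, at the base point, and the subgroup element second via the uniform bound, since pointwise continuity at $y_0$ cannot control the action of $k$ at the infinitely many points $hy_0$; and (ii) to argue ``along the lines of'' the last paragraph of the proof of Theorem~\ref{t2.5} you should record that the pointwise estimate $Hy\subseteq\alpha[y]$ for all $y\in\overline{Tx}$ upgrades to the two-variable estimate that $(z,z')\in\alpha$ implies $(hz,hz')\in\alpha^3$ for all $h\in H$, which is the form of uniform stability actually used there.
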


\begin{proof}
We can assume $X=\overline{Tx}$ without loss of generality for $\overline{Tx}$ is a minimal subset of $X$. Let $\varepsilon,\alpha\in\mathscr{U}_X$ with $\alpha^2\subseteq\varepsilon$. There exists an index $\gamma\in\mathscr{U}_X$ and a thick subset $S$ of $T$ such that $s\gamma[x]\subseteq\alpha[sx]$ for all $s\in S$. Take an index $\delta\in\mathscr{U}_X$ with $\delta^2\subseteq\gamma$. Since $x$ is a.p. under $\mathscr{X}$, there is a discretely syndetic set $A$ in $T$ with $\overline{Ax}\subseteq\delta[x]$ and $\texttt{int}\,\overline{Ax}\not=\emptyset$. To show $\mathscr{X}$ is Lyapunov $T$-stable at $x$ w.r.t. $X$, it suffices to prove that $tax\in\varepsilon[tx]$ for all $t\in T$ and every $a\in A$.

For that, let $t\in T$ and $a\in A$ be arbitrarily given. Select an index $\sigma\in\mathscr{U}_X$ with $a\sigma[x]\subseteq\delta[ax]$. Since $x$ is regularly a.p. under $\mathscr{X}$, there exists a syndetic normal closed subgroup $B$ of $T$ such that $Bx\subseteq\sigma[x]$. Thus $aBx\subseteq\delta[ax]\subseteq\gamma[x]$. Since $S$ is thick and $tB$ is syndetic in $T$, $tB\cap S\not=\emptyset$. Thus $tb=s$ for some $b\in B$ and $s\in S$. Then
$$
tax=(tb)(b^{-1}a)x\in(tb)(aB)x\subseteq\alpha[tbx]\quad \textrm{and}\quad
tx=(tb)b^{-1}x\in(tb)Bx\subseteq(tb)\gamma[x]\subseteq\alpha[tbx].
$$
Whence $(tax,tx)\in\alpha^2\subseteq\varepsilon$ for all $t\in T$ and all $a\in A$.

Further by minimality of $X$, $\mathscr{X}$ is Lyapunov $T$-stable. On the other hand, $\mathscr{X}$ is regularly a.p. at each point of $Tx$. Then by Lyapunov stability, each point of $X=\overline{Tx}$ is regularly a.p. under $\mathscr{X}$. The proof is completed.
\end{proof}

\begin{t2.6a}
If $\mathscr{X}$ is a point-regularly a.p. thickly stable compact flow, then $\mathscr{X}$ is a regularly a.p. flow.
\end{t2.6a}

\begin{proof}
Use Theorem~\ref{t2.6} and the uniform equicontinuity of $\mathscr{X}$.
\end{proof}

\begin{t2.6b}
Let $\mathscr{X}$ be a flow with $X$ a Baire space. Let $S$ be a thick subset of $T$ and $x\in X$ a regularly a.p. point under $\mathscr{X}$. If $\mathscr{X}$ is Lyapunov $S$-stable at $x$ w.r.t. $Tx$, then $\mathscr{X}$ is Lyapunov $T$-stable at $x$ w.r.t. $\overline{Tx}$.
\end{t2.6b}

\begin{proof}
By Lemma~\ref{l2.4} and Theorem~\ref{t2.6}.
\end{proof}

Note that a point-regularly a.p. compact flow need not be a.p.; for example, \cite[Thm.~12.55]{GH}. So condition ``$\mathscr{X}$ is thickly stable'' is critical for Theorem~\ref{t2.6}.
\subsubsection{Lyapunov thick-subsemigroup stability}
\begin{se}[$P$-limit set]\label{2.7}
Let $x\in X$, and $P\subseteq T$. The \textit{$P$-limit set of $x$}, denoted $P_{\!x}$, is defined to be $\bigcap_{t\in T}\overline{Ptx}$; that is, $y\in P_{\!x}$ iff for every $t\in T$ there is a net $p_j\in P$ such that $p_jtx\to y$. Each point of $P_{\!x}$ is called a \textit{$P$-limit point of $x$} under $\mathscr{X}$. See \cite[Def.~6.33]{GH} or \cite[Def.~2.8.9]{B79}.
\begin{enumerate}[$\bullet$]
\item If $x\in X$ is a.p. under $\mathscr{X}$ and $P$ a thick set in $T$, then $x\in P_{\!x}$.
\end{enumerate}
Note here that $P_x$ is different from the proximal cell $\texttt{P}[x]$ under $\mathscr{X}$.
\end{se}

\begin{2.7a}[{cf.~\cite[Thm.~6.07]{GH} for $T$ a generative group}]
If $P$ is a thick semigroup in $T$ and $K$ a compact subset of $T$ with $e\in K$, then $\bigcap_{k\in K}kP$ is a thick semigroup in $T$.
\end{2.7a}

\begin{proof}
Write $Q=\bigcap_{k\in K}kP$. Since $P\cap kP$ is a semigroup for all $k\in K$, $Q$ is a semigroup in $T$. Let $C$ be a compact subset of $T$ and set $D=C\cup K^{-1}C$. Since $D$ is compact, we can select $t\in T$ such that $Dt\subseteq P$. Now we have for $k\in K$ that $C\subseteq D\cap kD$ and $Ct\subseteq Dt\cap kDt\subseteq P\cap kP$. Thus $Ct\subseteq Q$ and $Q$ is thick in $T$. The proof is completed.
\end{proof}

\begin{2.7b}[{cf.~\cite[Lem.~2.8.10]{B79} for $T$ an abelian group}]
Let $P$ be a normal thick subsemigroup of $T$. Let $x\in X$. Then based on $\mathscr{X}$:
\begin{enumerate}[$(1)$]
\item $P_{\!x}$ is closed and $T$-invariant.
\item If $\overline{Px}$ is compact, then $P_{\!x}\not=\emptyset$.
\item $P_{\!x}=\bigcap_{t\in P}\overline{Ptx}$.
\end{enumerate}
Here $P$ `normal' means `$Pt=tP\ \forall t\in T$'.
\end{2.7b}

\begin{proof}
\item (1): Clearly, $P_{\!x}$ is closed. Let $y\in P_{\!x}$ and $s,t\in T$. Since $y\in\overline{Ps^{-1}tx}$, so $sy\in s\overline{Ps^{-1}tx}=\overline{Ptx}$ and $sy\in P_{\!x}$. Thus $P_{\!x}$ is $T$-invariant.

\item (2): Suppose $\overline{Px}$ is compact. Let $t_1,\dotsc,t_n\in T$. By Lemma~\ref{2.7}a, $P\cap t_1P\cap\dotsm\cap t_nP$ is a thick semigroup in $T$. Thus
${\bigcap}_{i=1}^n\overline{Pt_ix}={\bigcap}_{i=1}^n\overline{t_iPx}\supseteq\left({\bigcap}_{i=1}^n t_iP\right)x\not=\emptyset$
and $P_{\!x}\not=\emptyset$.

\item (3): Since $P$ is normal thick in $T$, $P^{-1}$ is also thick in $T$. Then for all $t\in T$, there is some $s_t\in P$ with $s_tt\in P$. So
$P_{\!x}\subseteq{\bigcap}_{p\in P}\overline{Ppx}\subseteq{\bigcap}_{t\in T}\overline{Ps_ttx}\subseteq{\bigcap}_{t\in T}\overline{Ptx}\subseteq P_{\!x}$.
The proof is complete.
\end{proof}

\begin{lem}[{a special case of \cite[Thm.~1.15]{AD}}]\label{l2.8}
Let $\mathscr{X}$ be a compact flow and $S$ a subsemigroup of $T$. If $\mathscr{X}$ is Lyapunov $S$-stable, then it is Lyapunov $\langle S\rangle$-stable. Here $\langle S\rangle$ is the subgroup of $T$ generated by $S$.
\end{lem}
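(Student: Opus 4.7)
The plan is to identify, inside $C(X,X)$ with the topology of uniform convergence, the enveloping semigroup $E_S$ of $S$, show it is already a compact topological \emph{group}, and conclude that it then automatically contains the restriction to $X$ of every element of $\langle S\rangle$.

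First, since $X$ is compact, the remark following Def.~a/b in $\S$\ref{s2.2.1} says that Lyapunov $S$-stability is equivalent to uniform Lyapunov $S$-stability, i.e., to the family $\mathcal{F}_S:=\{s|_X:s\in S\}$ being uniformly equicontinuous. By Ascoli's theorem its closure $E_S$ in $C(X,X)$ is compact (and the uniform and pointwise topologies coincide on it), and consists of uniformly continuous self-maps of $X$. Because composition is jointly continuous on any equicontinuous family of self-maps of a compact uniform space, $E_S$ is a compact Hausdorff topological semigroup under composition.

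The core step is to show $E_S$ is a group. For surjectivity of an arbitrary $p\in E_S$, write $p=\lim_\alpha s_\alpha$ uniformly with $s_\alpha\in S$ and fix $y\in X$; by compactness of $X$, pass to a subnet along which $s_\alpha^{-1}y\to z$. Uniform convergence $s_\alpha\to p$ together with continuity of $p$ at $z$ forces $y=s_\alpha(s_\alpha^{-1}y)\to p(z)$, so $y=p(z)$. By Ellis's lemma, the compact topological semigroup $E_S$ contains an idempotent $e$; any such $e$ is the identity on $e(X)$, and surjectivity gives $e(X)=X$, so $e=\mathrm{id}_X$. Applied to the compact subsemigroup $\overline{\{p^n:n\ge 1\}}$ this also yields $\mathrm{id}_X\in\overline{\{p^n:n\ge 1\}}$, so a subnet $p^{n_\beta}\to\mathrm{id}_X$; passing to a further subnet with $p^{n_\beta-1}\to q\in E_S$ and using joint continuity of composition, $qp=pq=\mathrm{id}_X$. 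Hence every $p\in E_S$ is invertible, and $E_S$ is a compact topological group.

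To finish, each $s\in S$ is a homeomorphism of $X$ (since $s\in T$ and $T$ is a group), so the inverse of $s|_X$ in the group $E_S$ must be $s^{-1}|_X$; in particular $\{s^{-1}|_X:s\in S\}\subseteq E_S$. Closure of $E_S$ under composition then gives $g|_X\in E_S$ for every $g\in\langle S\rangle$, and equicontinuity of $E_S$ is exactly Lyapunov $\langle S\rangle$-stability of $\mathscr{X}$. The principal obstacle is the surjectivity argument: this is precisely where one must exploit that each $s\in S$ is genuinely invertible in $T$, and it is what ultimately forces every idempotent of $E_S$ to be $\mathrm{id}_X$ and $E_S$ to be a group rather than merely a retract-style compact semigroup.
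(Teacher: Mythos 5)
Your argument is correct. The paper itself offers no proof of this lemma; it simply defers to \cite[Thm.~1.15]{AD}, and your enveloping-semigroup argument is essentially the standard proof of that cited result: the pointwise-to-uniform upgrade on compact $X$ (the remark after Def.~a/b in \S\ref{s2.2.1}) makes $\{s|_X : s\in S\}$ uniformly equicontinuous, its closure $E_S$ is then a compact topological semigroup, surjectivity of its elements (which genuinely uses the invertibility of each $s$ in the group $T$) forces every idempotent to be $\mathrm{id}_X$ and hence $E_S$ to be a group containing each $s^{-1}|_X$, and equicontinuity of $E_S\supseteq\{g|_X : g\in\langle S\rangle\}$ gives Lyapunov $\langle S\rangle$-stability. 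The only step worth making explicit is the degenerate case $n_\beta=1$ in the inversion argument (if $n_\beta=1$ cofinally then $p=\mathrm{id}_X$ outright; otherwise $n_\beta\ge 2$ eventually and the further subnet $p^{n_\beta-1}\to q$ lies in $E_S$), but this is a triviality rather than a gap.
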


\begin{thm}\label{t2.9}
Let $\mathscr{X}$ be a compact flow and $S$ a thick subsemigroup of $T$. If $\mathscr{X}$ is Lyapunov $S$-stable, then $\mathscr{X}$ is an a.p. flow.
\end{thm}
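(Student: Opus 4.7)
The plan is to reduce the statement to Lyapunov $T$-stability and then invoke Lemma~\ref{l2.3}. Since $\mathscr{X}$ is compact, Lyapunov $T$-stable is the same as equicontinuous (by the equivalence of Def.~a and Def.~b on a compact space, together with Def.~IV), and equicontinuous flows on a compact space are a.p.\ by Lemma~\ref{l2.3}. So the entire task is to promote the hypothesized Lyapunov $S$-stability to Lyapunov $T$-stability.

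The key observation is that a thick subsemigroup must generate the whole group: I claim $T=\langle S\rangle$. Indeed, given any $t_0\in T$, the set $K=\{e,t_0\}$ is compact, so by thickness of $S$ there exists $t\in T$ with $Kt\subseteq S$; that is, $t\in S$ and $t_0 t\in S$. Writing $t_0=(t_0t)t^{-1}\in SS^{-1}$, we obtain $T=SS^{-1}\subseteq\langle S\rangle\subseteq T$, whence $\langle S\rangle=T$. This is the only place where thickness is used.

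With $\langle S\rangle=T$ in hand, the conclusion follows directly from Lemma~\ref{l2.8}: Lyapunov $S$-stability of the compact flow $\mathscr{X}$ extends to Lyapunov $\langle S\rangle$-stability, i.e.\ Lyapunov $T$-stability. By the equivalence above together with Lemma~\ref{l2.3}, $\mathscr{X}$ is an a.p.\ flow. I do not foresee a serious obstacle: Lemma~\ref{l2.8} does the heavy lifting of turning the semigroup into a group, and the thickness of $S$ supplies the cheap but essential fact $\langle S\rangle=T$ by the two-point compact set trick above.
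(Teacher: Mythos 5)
Your proposal is correct and follows essentially the same route as the paper: both arguments invoke Lemma~\ref{l2.8} to upgrade Lyapunov $S$-stability to Lyapunov $\langle S\rangle$-stability and then conclude via Lemma~\ref{l2.3}, with the only cosmetic difference being how $\langle S\rangle=T$ is justified (the paper notes that $\langle S\rangle$ is a thick subgroup and hence all of $T$, while you prove $T=SS^{-1}$ directly with the two-point compact set $\{e,t_0\}$ — the same underlying observation).
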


\begin{proof}
By Lemma~\ref{l2.8}, $(\langle S\rangle,X)$ is an equicontinuous flow. Since $S$ is thick in $T$, so is $\langle S\rangle$. However, every thick subgroup of $T$ coincides with $T$. Thus $\langle S\rangle=T$ and $\mathscr{X}$ is equicontinuous. Since $X$ is compact, $\mathscr{X}$ is a.p. by Lemma~\ref{l2.3}. The proof is complete.
\end{proof}

\begin{t2.9a}[{cf.~\cite{M} \cite[Thm.~V.8.12]{NS} for $T=\mathbb{R}$ and $S=\mathbb{R}_+$; \cite[Thm.~3]{E} \cite[Thm.~2.8.12]{B79} for $T$ abelian}]
Let $\mathscr{X}$ be a flow and let $S$ be a thick normal subsemigroup of $T$. Let $x\in X$ such that $\overline{S^{-1}x}$ is compact and
that $(T,Tx)$ is a uniformly Lyapunov $S$-stable subflow of $\mathscr{X}$.
Then $(T,\overline{Tx})$ is a compact minimal a.p. subflow of $\mathscr{X}$.
\end{t2.9a}

\begin{proof}
Let $P=S^{-1}$; then $P\triangleleft T$ and $P$ is thick in $T$. By Lemma~\ref{2.7}b, $P_{\!x}$ is an $T$-invariant closed nonempty compact subset of $X$. Then we can take a $T$-minimal set $M$ with $M\subseteq P_{\!x}\subseteq \overline{Tx}$. Since $(T,Tx)$ is uniformly Lyapunov $S$-stable, $(T,\overline{Tx})$ is also uniformly Lyapunov $S$-stable.
Let $y\in M$. Since $M\subseteq \overline{S^{-1}x}$, there is a net $s_n\in S$ such that $s_n^{-1}x\to y$ in $\overline{Tx}$. By uniform $S$-stability, $s_ny\to x\in M$. Thus $\overline{Tx}=M$ is compact. Then by Theorem~\ref{t2.9}, $(T,\overline{Tx})$ is equicontinuous and so an a.p. minimal compact subflow of $\mathscr{X}$. The proof is completed.
\end{proof}

\begin{t2.9b}[{cf.~\cite[Thm.~4]{E} \cite[Thm.~2.8.14]{B79} for $T$ abelian}]
Let $\mathscr{X}$ be a flow and let $P$ be a thick normal subsemigroup of $T$. Let $x\in X$. Then following statements are equivalent:
\begin{enumerate}[$(1)$]
\item $x$ is Bohr a.p. under $\mathscr{X}$ with $\overline{Tx}$ compact.
\item $x\in P_{\!x}$, $P_{\!x}$ is compact, and $\mathscr{X}$ is uniformly Lyapunov $P$-stable at $Tx$ w.r.t. $P_{\!x}$.
\end{enumerate}
\end{t2.9b}

\begin{proof}
Assume (1). Clearly, $P_{\!x}=\overline{Tx}$ is compact minimal by Lemma~\ref{2.7}b, and $(T,\overline{Tx})$ is equicontinuous. Thus $x\in P_{\!x}$ and $Tx$ is uniformly Lyapunov $P$-stable w.r.t. $P_{\!x}$. Then (1) implies (2). Conversely assume (2). By Lemma~\ref{2.7}b, $\overline{Tx}\subseteq P_{\!x}$ and $\overline{Tx}= P_{\!x}$. Now applying Theorem~\ref{t2.9} with $\overline{Tx}$ and $P$ instead of $X$ and $S$ respectively, $(T,P_{\!x})$ is a.p. Since $x\in P_{\!x}$, so $x$ is Bohr a.p. and (2) implies (1). The proof is complete.
\end{proof}

\begin{t2.9c}[{cf.~\cite[Thm.~5]{E} for $T$ abelian}]
Let $\mathscr{X}$ be a flow and let $P$ be a thick normal subsemigroup of $T$. Suppose $y\in X$ such that:
\begin{enumerate}[$(1)$]
\item  The $P$-limit set of $y$, $P_{\!y}$, is compact nonempty with $P_{\!x}=P_{\!y}$ for all $x\in P_{\!y}$.
\item  $\mathscr{X}$ is Lyapunov $P$-stable at $\overline{Ty}$ w.r.t. $P_{\!y}$.
\end{enumerate}
Then $P_{\!y}$ is the closure of a Bohr a.p. point of $\mathscr{X}$.
\end{t2.9c}

\begin{proof}
Condition (1) implies that $P_{\!y}$ is a minimal set for $\mathscr{X}$. Since $P_{\!y}$ is compact, condition (2) implies that $(T,P_{\!y})$ is uniformly Lyapunov $P$-stable. Then by Theorem~\ref{t2.9}, $(T,P_{\!y})$ is (Bohr) a.p. so that $P_{\!y}$ is the closure of a Bohr a.p. orbit of $\mathscr{X}$. The proof is complete.
\end{proof}

\subsubsection{Syndetic distality}
It is easy to verify that every uniformly distal compact flow is a.p. (cf.~\cite[Lem.~1.7(2)]{AD} and \cite[Thm.~1.1.5]{CD20}). We now improve this result as follows:

\begin{thm}\label{t2.10}
If $\mathscr{X}$ is a syndetically distal compact flow, then it is pointwise Bohr a.p.
\end{thm}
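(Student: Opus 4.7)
I would prove the theorem in two stages: first show that syndetic distality forces $\mathscr{X}$ to be distal, and then upgrade distality to equicontinuity of each orbit closure (which by Lemma~\ref{l2.3} is equivalent to Bohr almost periodicity of every point).

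\emph{Stage~1 (distality).} Given $(x,y)\in\texttt{P}(\mathscr{X})$ with $x\neq y$, fix a closed symmetric entourage $\varepsilon_0\in\mathscr{U}_X$ with $y\notin\varepsilon_0[x]$, and invoke syndetic distality at $(x,\varepsilon_0)$: one obtains a syndetic $S\subseteq T$ (with compact witness $K$ such that $Kt\cap S\neq\emptyset$ for all $t\in T$) and an entourage $\delta$ with $(tx,ty)\notin\delta$ for every $t\in S$. By proximality choose $t_j$ with $t_j(x,y)\to(z,z)\in\Delta_X$, then select $k_j\in K$ with $k_j t_j\in S$ and pass to a subnet $k_j\to k\in K$. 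Continuity of the action gives $k_j t_j(x,y)\to(kz,kz)\in\Delta_X$, so the pair $(k_j t_j x, k_j t_j y)$ eventually enters $\delta$ and contradicts $k_j t_j\in S$. Hence $\mathscr{X}$ is distal, and by~(\ref{s1}.7b) it is pointwise a.p.

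\emph{Stage~2 (equicontinuity of orbit closures) and main obstacle.} Fix $x\in X$ and let $M:=\overline{Tx}$, a minimal distal compact subflow which inherits syndetic distality. Replacing $\mathscr{X}$ by $(T,M)$, it suffices to show $\texttt{RP}(\mathscr{X})=\Delta_X$. Suppose not, and take $(u,v)\in\texttt{RP}(\mathscr{X})\setminus\Delta_X$ with $(u_j,v_j)\to(u,v)$ and $t_j$ such that $t_j(u_j,v_j)\to(z,z)\in\Delta_X$. Apply syndetic distality at $u$ with a closed $\varepsilon$ separating $v$ from $u$, obtaining $S$ syndetic (witness $K$) and $\delta$; choose $k_j\in K$ with $k_j t_j\in S$ and a subnet $k_j\to k$. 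Since $v_j\notin\varepsilon[u]$ for large $j$, syndetic distality yields $(k_j t_j u, k_j t_j v_j)\notin\delta$, while $(k_j t_j u_j, k_j t_j v_j)\to(kz,kz)$; the triangle inequality therefore forces $(k_j t_j u, k_j t_j u_j)$ to be $\delta/2$-separated in the limit, even though $u_j\to u$. The \emph{main obstacle} is to turn this persistent separation into an outright contradiction: it would follow immediately from equicontinuity of $k_j t_j$, which is precisely what we are trying to establish. The envisaged resolution is to pass to a further subnet so that $(k_j t_j u, k_j t_j u_j)\to(a,b)\in M\times M\setminus\Delta_M$, use minimality of $M$ to transport the syndetic witness by compact translates to a new base point lying on the $T$-orbit of $u$, and iterate. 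The compactness of $K$ and of $M$ should then permit extraction of a thick subsemigroup of $T$ on which $(T,M)$ is Lyapunov stable, at which point Theorem~\ref{t2.9} yields that $(T,M)$ is a.p., completing the proof.
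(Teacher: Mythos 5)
Your Stage~1 is correct and is essentially the paper's first step: the paper likewise shows distality by intersecting the syndetic set from the definition with the thick set of return times to an entourage along a proximal pair, so up to the phrasing (net extraction versus ``syndetic meets thick'') the arguments coincide.

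Stage~2, however, contains a genuine gap --- one you candidly flag as the ``main obstacle'' --- and the envisaged resolution does not close it. The difficulty is structural: syndetic distality is a \emph{separation} condition anchored at a fixed reference point $x$ (if $y\notin\varepsilon[x]$ then $ty\notin\delta[tx]$ for $t\in S$), whereas in the definition of $\texttt{RP}(\mathscr{X})$ \emph{both} coordinates of the approximating pair $(u_j,v_j)$ move off the reference orbit, so the hypothesis simply does not apply to the pairs $(t_ju_j,t_jv_j)$ you need to control. Your proposed fix --- iterating, transporting witnesses by compact translates, and extracting a thick subsemigroup on which $(T,M)$ is Lyapunov stable so that Theorem~\ref{t2.9} applies --- has no mechanism behind it: syndetic distality never produces a \emph{closeness} (stability) estimate, only persistence of separation, and nothing in the data yields a thick \emph{subsemigroup}. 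The paper resolves exactly this point with a different and essential input. It introduces the one-sided (strong) regionally proximal relation
$\texttt{U}(\mathscr{X})$, in which only the second coordinate is perturbed, observes that
$\texttt{U}[x]={\bigcap}_{\alpha\in\mathscr{U}_X}\overline{{\bigcup}_{t\in T}t^{-1}(\alpha[tx])}$,
and notes that syndetic distality controls precisely this set: the contrapositive reads $s^{-1}(\delta[sx])\subseteq\varepsilon[x]$ for $s\in S$, and writing $t=ks$ with $T=KS$ and shrinking the entourage by the compact set $K$ extends the estimate from $S$ to all of $T$, giving $\texttt{U}[x]\subseteq\overline{\varepsilon[x]}$ and hence $\texttt{U}[x]=\{x\}$. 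The bridge back to what you actually need is McMahon's theorem that $\texttt{RP}(\mathscr{X})=\texttt{U}(\mathscr{X})$ for flows admitting a (relatively) invariant measure, which applies here because a minimal distal compact flow carries an invariant Borel probability measure by Furstenberg. This identification is the missing idea in your proposal; without it (or a substitute of comparable strength), the passage from one-sided to two-sided regional proximality --- which is exactly where your argument stalls --- does not go through.
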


\begin{proof}
First we claim that $\mathscr{X}$ is distal. Indeed, let $x\in X$ and $y\in\texttt{P}[x]$ with $x\not=y$. Then there is an index $\epsilon\in\mathscr{U}_X$ with $(x,y)\notin\epsilon$. Since $\mathscr{X}$ is syndetically distal, there exists an index $\gamma\in\mathscr{U}_X$ and a syndetic subset $A$ of $T$ such that $t^{-1}(\gamma[tx])\subseteq\epsilon[x]$ for all $t\in A$. However, by $y\in\texttt{P}[x]$, $B:=\{t\in T\,|\,(tx,ty)\in\gamma\}$ is a thick subset of $T$. By $A\cap B\not=\emptyset$, take $t_0\in A\cap B$. Then
$t_0y\in\gamma[t_0x]$ and $t_0^{-1}(\gamma[t_0x])\subseteq\epsilon[x]$. So $(x,y)\in\epsilon$, a contradiction. Thus $\texttt{P}(\mathscr{X})=\Delta_X$ and $\mathscr{X}$ is a distal compact flow.

So to show $\mathscr{X}$ pointwise Bohr a.p., we can assume $\mathscr{X}$ is minimal distal, and we shall show $\mathscr{X}$ is a.p. For this, define the strong regionally proximal relation on $X$ by
\begin{equation*}
\texttt{U}(\mathscr{X})=\{(x,y)\,|\,\exists\, y_n\in X\to y\textrm{ and }t_n\in T\textrm{ s.t. }(t_nx,t_ny_n)\to\Delta_X\}
\end{equation*}
and let $\texttt{U}[x]=\{y\in X\,|\,(x,y)\in \texttt{U}(\mathscr{X})\}$ for all $x\in X$.
Then
$\texttt{U}[x]={\bigcap}_{\alpha\in\mathscr{U}_X}\overline{{\bigcup}_{t\in T}t^{-1}(\alpha[tx])}$ for all $x\in X$.
Since $\mathscr{X}$ is minimal distal, $\mathscr{X}$ admits an invariant Borel probability measure (cf.~\cite{F63}).
Then it follows from \cite{Mc} that $\texttt{RP}(\mathscr{X})=\texttt{U}(\mathscr{X})$. Therefore,
$$
\texttt{RP}[x]=\texttt{U}[x]={\bigcap}_{\alpha\in\mathscr{U}_X}\overline{{\bigcup}_{t\in T}t^{-1}(\alpha[tx])}\quad \forall x\in X.
$$
Let $x\in X$ and $\varepsilon\in\mathscr{U}_X$. There exists an index $\delta\in\mathscr{U}_X$ and a syndetic subset $S$ of $T$ such that
$s^{-1}(\delta[sx])\subseteq\varepsilon[x]$ for all $s\in S$. Since $S$ is syndetic, there is a compact subset $K$ of $T$ such that $T=KS$. Further, since $X$ is compact and the phase mapping $T\times X\rightarrow X$ is jointly continuous, there exists an index $\alpha\in\mathscr{U}_X$ such that $K^{-1}\alpha\subseteq\delta$. Let $t\in T$ be arbitrary. Then we can write $t=ks$ for some $k\in K$ and some $s\in S$. Thus
$t^{-1}(\alpha[tx])=s^{-1}(k^{-1}(\alpha[ksx]))\subseteq s^{-1}(\delta[sx])\subseteq\varepsilon[x]$.
This shows that $\texttt{RP}[x]\subseteq\overline{\varepsilon[x]}$. Since $\varepsilon$ is arbitrary, so $\texttt{RP}[x]=\{x\}$ and $\texttt{RP}(\mathscr{X})=\Delta_X$. Then $\mathscr{X}$ is equicontinuous and a.p. by Lemma~\ref{l2.3}. The proof is complete.
\end{proof}

\begin{t2.10a}
If $\mathscr{X}$ is a $\{$syndetically$\}$$\{$thickly$\}$ distal and syndetically stable compact flow, then it is an a.p. flow.
\end{t2.10a}

\begin{proof}
Since $\mathscr{X}$ is syndetically stable, it follows that $\texttt{P}(\mathscr{X})=\texttt{RP}(\mathscr{X})$. If $\mathscr{X}$ is syndetically distal, then by Theorem~\ref{t2.10}, $\texttt{RP}(\mathscr{X})=\Delta_X$. If $\mathscr{X}$ is thickly distal, then by \ref{s2}.0h $\texttt{RP}(\mathscr{X})=\Delta_X$. So $\mathscr{X}$ is an a.p. flow by Lemma~\ref{l2.3}. The proof is complete.
\end{proof}

\begin{t2.10b}
If $\mathscr{X}$ is a syndetically distal T.T. compact flow, then it is an a.p. minimal flow.
\end{t2.10b}
\begin{proof}
By Theorem~\ref{t2.10} and (\ref{s1}.7b).
\end{proof}

Recall that $\mathscr{X}$ is said to be uniformly syndetically stable/equicontinuous if for every index $\varepsilon\in\mathscr{U}_X$ there exists an index $\delta\in\mathscr{U}_X$ and a syndetic subset $S$ of $T$ such that $S\delta\subseteq\varepsilon$.

\begin{t2.10c}[{\cite{C}}]
If $\mathscr{X}$ is a uniformly syndetically stable compact flow with $T$ an abelian group, then $\mathscr{X}$ is an a.p. flow.
\end{t2.10c}

\begin{proof}
Since $T$ is abelian, $\mathscr{X}$ is syndetically distal. Then $\mathscr{X}$ is a.p. by Corollary~\ref{t2.10}a.
\end{proof}

\begin{t2.10d}
If $\mathscr{X}$ is a uniformly syndetically stable compact flow with $T$ non-abelian, is $\mathscr{X}$ an a.p. flow?
\end{t2.10d}

\noindent We say that
\begin{enumerate}[$\bullet$]
\item $\mathscr{X}$ is \textit{uniformly syndetically regularly stable} if for every index $\varepsilon\in\mathscr{U}_X$ there exists an index $\delta\in\mathscr{U}_X$ and a syndetic subsemigroup $A$ of $T$ such that $A\delta\subset\varepsilon$.

\item $\mathscr{X}$ is \textit{uniformly left-syndetically stable} if for every index $\varepsilon\in\mathscr{U}_X$ there exists an index $\delta\in\mathscr{U}_X$ and a ``left-syndetic'' subset $A$ of $T$ such that $A\delta\subset\varepsilon$.

\item $\mathscr{X}$ is \textit{uniformly left-thickly stable} if for every index $\varepsilon\in\mathscr{U}_X$ there exists an index $\delta\in\mathscr{U}_X$ and a ``left-thick'' subset $A$ of $T$ such that $A\delta\subset\varepsilon$.
\end{enumerate}
Here $A$ is \textit{left-syndetic} iff there exists a compact subset $K$ of $T$ such that
    $A\cap tK\not=\emptyset$ for all $t\in T$, and, $A$ is \textit{left-thick} iff for every compact subset $K$ of $T$ there exists an element $t\in T$ such that
    $tK\subseteq A$.

\begin{t2.10e}
If $\mathscr{X}$ is a uniformly syndetically regularly stable compact flow, then it is an a.p. flow.
\end{t2.10e}

\begin{proof}
Let $\varepsilon\in\mathscr{U}_X$. We can take an index $\alpha\in\mathscr{U}_X$ such that $\bar{\alpha}\subseteq\varepsilon$. Further there exists an index $\delta\in\mathscr{U}_X$ and a syndetic subsemigroup $A$ of $T$ such that $A\delta\subseteq\alpha$. Let $S=\overline{A}$. Then $S$ is a closed syndetic subsemigroup of $T$ such that $S\delta\subseteq\varepsilon$. Noting that $S$ is a subgroup of $T$ (cf.~\cite[Lem.~2.8.17]{B79}), $\mathscr{X}$ is syndetically distal and syndetically stable. Thus $\mathscr{X}$ is a.p. by Corollary~\ref{t2.10}a. The proof is complete.
\end{proof}

\begin{t2.10f}
If $\mathscr{X}$ is a uniformly left-syndetically stable compact flow, then it is an a.p. flow.
\end{t2.10f}

\begin{proof}
Let $\varepsilon\in\mathscr{U}_X$. Then there exists an index $\delta\in\mathscr{U}_X$ and a left-syndetic subset $A$ of $T$ such that $A\delta\subset\varepsilon$. Since $A$ is left-syndetic, there is a compact subset $K$ of $T$ such that $T=AK$. Further, there exists an index $\alpha\in\mathscr{U}_X$ such that $K\alpha\subseteq\delta$. Thus $T\alpha=AK\alpha\subseteq A\delta\subset\varepsilon$. Hence $\mathscr{X}$ is an equicontinuous and a.p. flow. The proof is complete.
\end{proof}

\begin{t2.10g}
If $\mathscr{X}$ is a uniformly syndetically distal compact flow, then it is an a.p. flow.
\end{t2.10g}

\begin{proof}
Let $\mathscr{X}$ be uniformly syndetically distal. Since the inverse of a syndetic set is left-syndetic, $\mathscr{X}$ is clearly a uniformly left-syndetic stable flow. Thus $\mathscr{X}$ is a.p. by Theorem~\ref{t2.10}f. The proof is complete.
\end{proof}

Theorem~\ref{t2.10}f is comparable with Clay's theorem (\cite[Thm.~14]{C1} \cite[Thm.~1.6.21]{B79}): $\mathscr{X}$ is uniformly left-thickly stable iff it is equicontinuous. Moreover, following \cite[Thm.~8.3]{AD}, $\mathscr{X}$ is equicontinuous iff for every $\alpha\in\mathscr{U}_X$ there exists a syndetic subset $A$ of $T$ such that $A=A^{-1}$ (so $A$ is left-syndetic) and $Ax\subseteq\alpha[x]$ for all $x\in X$.

\section{Canonical M-dynamics of non-PI extensions}\label{3}
Let $\pi\colon\mathscr{X}\rightarrow\mathscr{Y}$ be a nontrivial extension of minimal compact dynamics, unless specified otherwise, in this section.

Recall that $\pi$ is weakly mixing if $\mathscr{R}_\pi$ is T.T.; and $\pi$ is ``relatively incontractible'' (RIC) if $\pi$ is open such that
$\texttt{R}_\pi^n$ has a dense set of a.p. points for all $n\ge2$ (cf.~\cite{CD20}).
Here we shall define canonical M-dynamics in $\texttt{R}_\pi$, which will be useful for describing the $\pi$-relative unpredicted dynamics of $\mathscr{X}$. Moreover, we will characterize intrinsically PI-extensions of minimal dynamics in this section.

In the sequel, $T$ is thought of as a discrete space and let $\mathbb{I}$ be a minimal left ideal in $\beta T$. Let $\mathscr{I}=(T,\mathbb{I})$ as in $\S$\ref{s2.1}. Let $\pi_X\colon \mathscr{I}\rightarrow \mathscr{X}$ and $\pi_Y\colon\mathscr{I}\rightarrow \mathscr{Y}$ be two extensions such that $\pi_Y=\pi\circ\pi_X$. Associated to $\pi_X$ and $\pi_Y$ we can define two groups in $\textrm{Aut}\,(\mathscr{I})$:

\begin{se}\label{3.1}
$\mathbb{A}=\{\alpha\in \textrm{Aut}\,(\mathscr{I})\,|\,\pi_X=\pi_X\circ\alpha\}$ and $\mathbb{F}=\{\alpha\in \textrm{Aut}\,(\mathscr{I})\,|\,\pi_Y=\pi_Y\circ\alpha\}$.
\end{se}
\noindent
Then $\mathbb{A}$ and $\mathbb{F}$ are $\tau$-closed subgroups of $\textrm{Aut}\,(\mathscr{I})$ with $\mathbb{A}\subseteq\mathbb{F}$, and, $\mathbb{A}=\mathbb{F}$ iff $\pi$ is proximal; see \cite[Appendix~A]{CD20}.
Put $\mathbb{F}_0=\mathbb{F}$, $\mathbb{F}_1=\mathbb{F}^\prime$, $\mathbb{F}_2=\mathbb{F}_1^\prime$, and for every ordinal $o$, $\mathbb{F}_{o+1}=\mathbb{F}_o^\prime$. If $o$ is a limit ordinal, put $\mathbb{F}_o=\bigcap_{\gamma<o}\mathbb{F}_\gamma$. Then by transfinite induction there there exists a least ordinal $o$ such that
$\mathbb{F}_0\supsetneq\mathbb{F}_1\supsetneq\mathbb{F}_2\supsetneq\dotsm\supsetneq\mathbb{F}_o=\mathbb{F}_{o+1}=\dotsm$.
Write
\begin{se}\label{3.2}
$\mathbb{F}_\infty=\mathbb{F}_o$,
which is called the \textit{core} of $\mathbb{F}$ and which is a $\tau$-closed subgroup of $\mathbb{F}$ such that $\mathbb{A}\mathbb{F}_\infty=\mathbb{F}_\infty\mathbb{A}$.
\end{se}
\noindent
Then:
\begin{se}\label{3.3}
$\pi$ is a \textit{PI-extension} iff $\mathbb{F}_\infty\subseteq\mathbb{A}$ (cf.~\cite[Thm.~X.4.2]{G76} or \cite[Thm.~14.23]{A88}; following from \ref{3.6}).
\end{se}
\noindent
Notice that the PI condition of $\pi$ is independent of the choice of $\pi_X$ and $\pi_Y$. So PI-extension is an intrinsic concept. Every PI-extension has ``good'' structure; see \ref{3.6} below.

There are many intrinsic characterizations of PI-extensions; see \cite{B77, MN, Wo}. In particular, ``$\pi$ is PI iff every M-dynamic in $\texttt{R}_\pi$ is minimal'' (due to Bronstein, cf.~\cite{B77, Wo, Ge01}). In fact, we will show that $\pi$ is PI iff every M-dynamic containing $\Delta_X$ in $\texttt{R}_\pi$ is just equal to $\Delta_X$ (see Theorem~\ref{3.10} below). So if $\pi$ is not PI, then $\texttt{R}_\pi$ contains a non-minimal M-dynamic.

However, we shall be only interested to the non-minimal M-dynamics that contains $\Delta_X$. This type of M-dynamic is useful for us to define chaos of non-PI extensions. For this, we now put
\begin{se}\label{3.4}
$\texttt{M}_\infty=(\pi_X\times\pi_X)\overline{\Gamma[\mathbb{F}_\infty]}$, where $\Gamma[\mathbb{F}_\infty]={\bigcup}_{\alpha\in\mathbb{F}_\infty}\Gamma[\alpha]$.
\end{se}
\noindent
Clearly, $\texttt{M}_\infty$ is a closed invariant symmetric subset of $\texttt{R}_\pi$ containing a dense set of a.p. points of $\texttt{RP}_\pi$ (cf.~\cite[Lem.~A.4.4]{CD20}).

Although we have not shown that $\texttt{M}_\infty$ is an AP-transitive relation on $X$ here, yet we can first obtain the following:

\begin{thm}\label{3.5}
If $\pi$ is not PI, then $(T,\texttt{M}_\infty)$ is an M-dynamic in $\texttt{R}_\pi$ with $\Delta_X\varsubsetneq \texttt{M}_\infty$ and $\texttt{RP}_{\pi|\texttt{M}_\infty}=\texttt{M}_\infty$.
\end{thm}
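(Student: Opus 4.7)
The plan is to verify four items separately: $\Delta_X\varsubsetneq\texttt{M}_\infty$, density of a.p. points in $\texttt{M}_\infty$, the identity $\texttt{RP}_{\pi|\texttt{M}_\infty}=\texttt{M}_\infty$, and T.T.~of $(T,\texttt{M}_\infty)$. First, since $\textrm{id}_\mathbb{I}\in\mathbb{F}_\infty$ and $\pi_X\colon\mathbb{I}\to X$ is surjective, $\Delta_X\subseteq\texttt{M}_\infty$; and by \ref{3.3} the failure of PI yields some $\alpha\in\mathbb{F}_\infty\setminus\mathbb{A}$, so there is $m\in\mathbb{I}$ with $\pi_Xm\neq\pi_X\alpha m$, exhibiting $(\pi_Xm,\pi_X\alpha m)\in\texttt{M}_\infty\setminus\Delta_X$. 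For density, each $(m,\alpha m)\in\Gamma[\mathbb{F}_\infty]$ is an a.p. point of $(T,\mathbb{I}\times\mathbb{I})$ (\cite[Lem.~A.4.4]{CD20}), hence its image is a.p.~in $\texttt{R}_\pi$; since $\mathbb{I}\times\mathbb{I}$ is compact Hausdorff, $(\pi_X\times\pi_X)\overline{\Gamma[\mathbb{F}_\infty]}=\overline{(\pi_X\times\pi_X)\Gamma[\mathbb{F}_\infty]}$, so $(\pi_X\times\pi_X)\Gamma[\mathbb{F}_\infty]$ is the required dense set.

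The key algebraic input for the remaining two items is that the transfinite derivation of $\mathbb{F}$ stabilizes at $\mathbb{F}_\infty$, so $\mathbb{F}_\infty=\mathbb{F}_\infty'$; combined with \cite[Lem.~A.4.3]{CD20} this gives: for all $\alpha,\gamma\in\mathbb{F}_\infty$ there exists a single net $\delta_j\in\mathbb{F}_\infty$ with $\delta_j\to_\tau\alpha$ and $\delta_j\to_\tau\gamma$ simultaneously. Unpacking the $\tau$-topology, this yields nets $m_j\to m$ with $\delta_jm_j\to\alpha m$ and $n_j\to n$ with $\delta_jn_j\to\gamma n$, putting both $(\pi_Xm_j,\pi_X\delta_jm_j)$ and $(\pi_Xn_j,\pi_X\delta_jn_j)$ inside the single set $(\pi_X\times\pi_X)\Gamma[\delta_j]$, which is a minimal subset of $\texttt{R}_\pi$ (the equivariant image of the minimal $\Gamma[\delta_j]\subseteq\mathbb{I}\times\mathbb{I}$ under a map with compact domain).

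For the $\texttt{RP}$-identity, $\texttt{RP}_{\pi|\texttt{M}_\infty}$ is closed in $\texttt{M}_\infty$ by a diagonal argument in the compact space $X\times X$, so it suffices to prove the inclusion on the dense subset $(\pi_X\times\pi_X)\Gamma[\mathbb{F}_\infty]$. Given $\bar{x}=(\pi_Xm,\pi_X\alpha m)$ with $\alpha\in\mathbb{F}_\infty$, I apply the above with $\gamma=\textrm{id}_\mathbb{I}$ and $n=m$: the resulting $\bar{x}_j=(\pi_Xm_j,\pi_X\delta_jm_j)\to\bar{x}$ and $\bar{y}_j=(\pi_Xn_j,\pi_X\delta_jn_j)\to(\pi_Xm,\pi_Xm)\in\Delta_X$ lie in the common minimal set $(\pi_X\times\pi_X)\Gamma[\delta_j]$; minimality lets me pick $t_j\in T$ translating $\bar{x}_j$ arbitrarily close to $\bar{y}_j$, and a Moore--Smith reindexing over $j$ together with shrinking neighborhoods of $\Delta_X$ packages this into a single pair of nets witnessing $\bar{x}\in\texttt{RP}_{\pi|\texttt{M}_\infty}$. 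For T.T., given nonempty open $U,V\subseteq\texttt{M}_\infty$, I pick $\bar{x}=(\pi_Xm,\pi_X\alpha m)\in U$ and $\bar{w}=(\pi_Xn,\pi_X\gamma n)\in V$ from the dense set, build the same $\delta_j,m_j,n_j$, and use minimality of $(\pi_X\times\pi_X)\Gamma[\delta_j]$ containing both $\bar{x}_j\in U$ and $\bar{w}_j\in V$ eventually to produce $t\in T$ with $t\bar{x}_j\in V$, i.e., $N_T(U,V)\neq\emptyset$.

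The main obstacle is the twin-net machinery: extracting one net $\delta_j\in\mathbb{F}_\infty$ that $\tau$-approximates two targets in $\mathbb{F}_\infty$ at once, and threading it through the (possibly non-metrizable) topologies on $\mathbb{I}$ and $X$ to produce simultaneous approximations whose $(\pi_X\times\pi_X)$-images behave as required. Once \cite[Lem.~A.4.3]{CD20} is in hand this reduces to routine Moore--Smith bookkeeping; the payoff is that we never have to check that $\texttt{M}_\infty$ is an AP-transitive relation, which sidesteps precisely the obstruction noted just before the statement that blocks a direct appeal to Theorem~\ref{thm2}.
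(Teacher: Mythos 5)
Your proposal is correct and follows essentially the same route as the paper: the non-PI hypothesis plus \ref{3.3} gives $\Delta_X\varsubsetneq\texttt{M}_\infty$, the a.p. points $(\pi_X\times\pi_X)\Gamma[\mathbb{F}_\infty]$ are dense, and topological transitivity comes from $\mathbb{F}_\infty=\mathbb{F}_\infty'$ via the twin-net argument of the proof of Theorem~\ref{thm2} (using \cite[Lem.~A.4.3]{CD20} and minimality of $(\pi_X\times\pi_X)\Gamma[\delta_j]$). The only (harmless) divergence is that you establish $\texttt{RP}_{\pi|\texttt{M}_\infty}=\texttt{M}_\infty$ directly by specializing the same net construction to $\gamma=\textrm{id}_\mathbb{I}$, whereas the paper deduces it from transitivity together with $\Delta_X\subseteq\texttt{M}_\infty$.
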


\begin{proof}
Since $\pi$ is not PI, $\mathbb{F}_\infty\nsubseteq\mathbb{A}$ by \ref{3.3} and $\textrm{id}_\I\in\mathbb{F}_\infty\cap\mathbb{A}$. Thus, $\Delta_X\varsubsetneq \texttt{M}_\infty$. Let $U$ and $V$ be two nonempty open sets in $\texttt{M}_\infty$. There exist two elements $\alpha,\gamma\in\mathbb{F}_\infty$ such that $U\cap(\pi_X\times\pi_X)\Gamma[\alpha]\not=\emptyset$ and $V\cap(\pi_X\times\pi_X)\Gamma[\gamma]\not=\emptyset$. As $\mathbb{F}_\infty=\mathbb{F}_\infty^\prime$, it follows that $(T,\texttt{M}_\infty)$ is T.T. (cf.~Proof of Theorem~\ref{thm2}) so that $(T,\texttt{M}_\infty)$ is an M-dynamic and then $\texttt{RP}_{\pi|\texttt{M}_\infty}=\texttt{M}_\infty$. The proof is complete.
\end{proof}

Therefore, in view of Bronstein's theorem and Theorem~\ref{3.5}, if $\texttt{R}_\pi$ has a non-minimal M-subdynamic, then $(T,\texttt{M}_\infty)$ is an M-dynamic in $\texttt{R}_\pi$ with $\Delta_X\varsubsetneq \texttt{M}_\infty$.

\begin{se}[Ellis-Glasner-Shapiro tower]\label{3.6}
Based on the classical Ellis-Glasner-Shapiro structure theorem (cf.~\cite{EGS, G76, V77, A88} and \cite{Ge01}), for $\pi\colon\mathscr{X}\rightarrow\mathscr{Y}$, there exists the following canonical CD of minimal compact dynamics and $T$-extensions:
$$
\begin{diagram}
\mathscr{I}&&&&\\
&\rdTo~{\pi_X}\rdTo(4,2)^{\pi_{X_\infty}}\rdTo(2,4)_{\pi_Y}\rdTo(4,4)~{\pi_{Y_\infty}----}\\
&&\mathscr{X}&\lTo^{\quad\quad\qquad\phi\qquad\quad\quad}&\mathscr{X}_\infty\\
&&\dTo~{\pi}&&\dTo~{\pi_\infty}\\
&&\mathscr{Y}&\lTo~\psi&\mathscr{Y}_\infty
\end{diagram}
\qquad \textrm{s.t.}
\begin{cases}\psi\textrm{ has a PI-tower},\\\phi\textrm{ is proximal},\\\pi_\infty\textrm{ is RIC weakly mixing},\\ \mathfrak{g}(\mathscr{X}_\infty)=\mathbb{A},\\
\mathfrak{g}(\mathscr{Y}_\infty)=\mathbb{A}\mathbb{F}_\infty.
\end{cases}
$$
Here $\mathbb{A}, \mathbb{F}$ are as in Def.~\ref{3.1} and
\begin{gather*}
\mathfrak{g}(\mathscr{X}_\infty)=\{\alpha\in\textrm{Aut}\,(\mathscr{I})\,|\,\pi_{X_\infty}=\pi_{X_\infty}\circ\alpha\},
\quad \mathfrak{g}(\mathscr{Y}_\infty)=\{\alpha\in\textrm{Aut}\,(\mathscr{I})\,|\,\pi_{Y_\infty}=\pi_{Y_\infty}\circ\alpha\}.
\end{gather*}
Notice that $(\phi\times\phi)\texttt{R}_{\pi_\infty}\varsubsetneq \texttt{R}_\pi$ in general, for $\psi$ need generally not be proximal and $\pi$ need not satisfy the Bronstein condition. Moreover, $\pi$ is PI iff $\pi_\infty$ is 1-1 onto iff $\pi_\infty$ is proximal iff $\mathbb{A}=\mathbb{A}\mathbb{F}_\infty$.
\end{se}

\begin{thm}\label{3.7}
Let $\pi\colon\mathscr{X}\rightarrow\mathscr{Y}$ be not PI. Then $\texttt{M}_\infty=(\phi\times\phi)\texttt{R}_{\pi_\infty}$ and $(\pi_X\times\pi_X)\Gamma[\mathbb{F}_\infty]$ is the set of all a.p. points of $(T,\texttt{M}_\infty)$.
\end{thm}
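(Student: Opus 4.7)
The plan is to deduce both assertions from the Ellis-Glasner-Shapiro tower in \ref{3.6} together with the standard Ellis algebraic description of a.p.\ points. The two key facts I will exploit are (i) $\pi_\infty$ is RIC weakly mixing, so $\texttt{R}_{\pi_\infty}$ has a dense set of a.p.\ points and $\mathfrak{g}(\mathscr{Y}_\infty)=\mathbb{A}\mathbb{F}_\infty$; (ii) the standard consequence of Ellis theory that the a.p.\ points of $(T,\texttt{R}_{\pi_\infty})$ are exactly the pairs of the form $(\pi_{X_\infty}m,\pi_{X_\infty}\beta m)$ with $m\in\mathbb{I}$ and $\beta\in\mathfrak{g}(\mathscr{Y}_\infty)=\mathbb{A}\mathbb{F}_\infty$.

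For the first equality, by (i)-(ii) above we have
$$
\texttt{R}_{\pi_\infty}=\overline{(\pi_{X_\infty}\times\pi_{X_\infty})\Gamma[\mathbb{A}\mathbb{F}_\infty]}.
$$
Applying $\phi\times\phi$ (a continuous map with compact domain, so it commutes with closure on closed invariant pieces), and using $\phi\circ\pi_{X_\infty}=\pi_X$, this becomes
$$
(\phi\times\phi)\texttt{R}_{\pi_\infty}=\overline{(\pi_X\times\pi_X)\Gamma[\mathbb{A}\mathbb{F}_\infty]}.
$$
But for any $\alpha\in\mathbb{A}$, $\gamma\in\mathbb{F}_\infty$ and $m\in\mathbb{I}$ one has $\pi_X(\alpha\gamma m)=\pi_X(\gamma m)$ since $\pi_X\circ\alpha=\pi_X$; hence $(\pi_X\times\pi_X)\Gamma[\mathbb{A}\mathbb{F}_\infty]=(\pi_X\times\pi_X)\Gamma[\mathbb{F}_\infty]$, and its closure equals $(\pi_X\times\pi_X)\overline{\Gamma[\mathbb{F}_\infty]}=\texttt{M}_\infty$. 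This gives $\texttt{M}_\infty=(\phi\times\phi)\texttt{R}_{\pi_\infty}$.

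For the a.p.\ characterization, first observe that any point of $(\pi_X\times\pi_X)\Gamma[\mathbb{F}_\infty]$ is a.p.\ in $\mathscr{R}_\pi$: given $\alpha\in\mathbb{F}_\infty$ and $m\in\mathbb{I}$, pick $u\in\mathbb{J}$ with $um=m$; then $u(m,\alpha m)=(m,\alpha m)$ (automorphisms of $\mathscr{I}$ commute with the $\beta T$-action), so $u(\pi_Xm,\pi_X\alpha m)=(\pi_Xm,\pi_X\alpha m)$. Conversely, let $(x,x')\in\texttt{M}_\infty$ be a.p., say $u(x,x')=(x,x')$ for some $u\in\mathbb{J}$. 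By the first part pick $(z,z')\in\texttt{R}_{\pi_\infty}$ lifting $(x,x')$; then $(uz,uz')\in\texttt{R}_{\pi_\infty}$ is an a.p.\ point lifting $(x,x')$ (since $u^2=u$). By the Ellis description of a.p.\ points recalled above, $(uz,uz')=(\pi_{X_\infty}m,\pi_{X_\infty}\beta m)$ for some $m\in\mathbb{I}$ and $\beta\in\mathbb{A}\mathbb{F}_\infty$. Writing $\beta=\alpha\gamma$ with $\alpha\in\mathbb{A}$ and $\gamma\in\mathbb{F}_\infty$ and applying $\phi$ yields $(x,x')=(\pi_Xm,\pi_X\gamma m)\in(\pi_X\times\pi_X)\Gamma[\mathbb{F}_\infty]$.

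The main obstacle is the proximal-lifting step in the converse direction, where one has to lift an a.p.\ pair $(x,x')\in\texttt{M}_\infty$ through $\phi\times\phi$ to an a.p.\ pair in $\texttt{R}_{\pi_\infty}$; the Ellis idempotent trick $(z,z')\mapsto(uz,uz')$ resolves this cleanly once one knows $\texttt{M}_\infty$ coincides with $(\phi\times\phi)\texttt{R}_{\pi_\infty}$, so the two halves of the theorem feed into one another in the expected order.
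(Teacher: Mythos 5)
Your proposal is correct and follows essentially the same route as the paper: both identify $(\phi\times\phi)\texttt{R}_{\pi_\infty}$ with $\texttt{M}_\infty$ by describing the (dense, since $\pi_\infty$ is RIC) a.p.\ points of $\texttt{R}_{\pi_\infty}$ as $(\pi_{X_\infty}\times\pi_{X_\infty})\Gamma[\mathbb{A}\mathbb{F}_\infty]$, absorbing the $\mathbb{A}$-factor via $\pi_X\circ\alpha=\pi_X$, and then lift a.p.\ points of $\texttt{M}_\infty$ through $\phi\times\phi$ (your idempotent trick is exactly what justifies the paper's one-line lifting claim). The only cosmetic difference is that the paper invokes \cite[Thm.~3.1.3]{CD20} to place the automorphism in $(\mathbb{A}\mathbb{F}_\infty)'$, whereas you use the plain Ellis-group description giving $\mathbb{A}\mathbb{F}_\infty$, which suffices for the same conclusion.
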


\begin{proof}
By Def.~\ref{3.4} and $\mathbb{F}_\infty\subseteq\mathfrak{g}(\mathscr{Y}_\infty)$, it follows that $\texttt{M}_\infty\subseteq(\phi\times\phi)\texttt{R}_{\pi_\infty}$. Let $(w,w^\prime)\in \texttt{R}_{\pi_\infty}$ be an a.p. point. Since $\pi_\infty$ satisfies the Bronstein condition and $(w,w^\prime)\in\texttt{RP}_{\pi_\infty}$, so by \cite[Thm.~3.1.3]{CD20} there is $\alpha\in(\mathbb{A}\mathbb{F}_\infty)^\prime\subseteq\mathbb{A}\mathbb{F}_\infty$ such that $(w,w^\prime)\in(\pi_{X_\infty}\times\pi_{X_\infty})\Gamma[\alpha]$. This implies that $(\phi\times\phi)(w,w^\prime)\in \texttt{M}_\infty$. Thus $\texttt{M}_\infty\supseteq(\phi\times\phi)\texttt{R}_{\pi_\infty}$ and $\texttt{M}_\infty=(\phi\times\phi)\texttt{R}_{\pi_\infty}$.

Let $(x_1,x_2)\in \texttt{M}_\infty$ be an a.p. point. By $\texttt{M}_\infty=(\phi\times\phi)\texttt{R}_{\pi_\infty}$, we can take an a.p. point $(w_1,w_2)$ in $\texttt{R}_{\pi_\infty}$ such that $\phi w_1=x_1$ and $\phi w_2=x_2$. Further, there exist elements $\alpha\in\mathbb{A}$ and $\gamma\in\mathbb{F}_\infty$ such that $(w_1,w_2)=(\pi_{X_\infty}\times\pi_{X_\infty})(m,\alpha\gamma m)=(\pi_{X_\infty}\times\pi_{X_\infty})(m,\gamma m)$ for some $m\in\I$. So by $\pi_X=\phi\circ\pi_{X_\infty}$, $(x_1,x_2)=(\pi_X\times\pi_X)(m,\gamma m)$. The proof is complete.
\end{proof}

\begin{lem}[{cf.~\cite[Lem.~2.10]{A84} for $T$ a group}]\label{3.8}
Let $\phi\colon\mathscr{X}\rightarrow\mathscr{Y}$ be a proximal extension of compact dynamics with $X$ having a dense set of a.p. points. Then $\mathscr{X}$ is an M-dynamic iff so is $\mathscr{Y}$.
\end{lem}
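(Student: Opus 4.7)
The plan is to prove both implications directly. The forward direction is immediate from general properties of continuous equivariant surjections: if $V_1,V_2\subset Y$ are nonempty open, then $\phi^{-1}(V_1),\phi^{-1}(V_2)$ are nonempty open in $X$, so T.T.\ of $\mathscr{X}$ supplies $t\in T$ with $t\phi^{-1}(V_1)\cap\phi^{-1}(V_2)\ne\emptyset$, which pushes forward under $\phi$ to $tV_1\cap V_2\ne\emptyset$. Moreover, since $ux=x$ forces $u\phi(x)=\phi(x)$ for every idempotent $u\in\mathbb{J}$, $\phi$ carries a.p.\ points to a.p.\ points, and thus a dense set of a.p.\ points in $X$ descends (by continuity of $\phi$ and $Y=\phi(X)$) to a dense set of a.p.\ points in $Y$.

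For the reverse implication I only have to establish T.T.\ of $\mathscr{X}$, since dense a.p.\ in $X$ is a standing hypothesis. Given nonempty open $U_1,U_2\subset X$, density of a.p.\ lets me pick a.p.\ anchors $x_i\in U_i$, so that $y_i:=\phi(x_i)$ are a.p.\ in $Y$. Applying T.T.\ of $\mathscr{Y}$ to shrinking neighborhoods of $y_1$ and $y_2$ yields a net $t_\alpha\in T$ and points $z_\alpha\in Y$ with $z_\alpha\to y_1$ and $t_\alpha z_\alpha\to y_2$. I then lift each $z_\alpha$ to $\tilde z_\alpha\in\phi^{-1}(z_\alpha)\subset X$, and by compactness of $X$ pass to subnets to obtain $\tilde z_\alpha\to\tilde x_1\in\phi^{-1}(y_1)$ and $t_\alpha\tilde z_\alpha\to\tilde x_2\in\phi^{-1}(y_2)$. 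Proximality of $\phi$ then forces each pair $(\tilde x_i,x_i)\in\texttt{R}_\phi$ to be proximal.

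The endgame converts this data into a witness that $N_T(U_1,U_2)\ne\emptyset$. Proximality of $(\tilde x_1,x_1)$ together with almost periodicity of $x_1$ produces a net $s_\beta\in T$ along which $s_\beta x_1\to x_1$ and $s_\beta\tilde x_1\to x_1$; symmetrically, a net $r_\gamma\in T$ with $r_\gamma\tilde x_2,r_\gamma x_2\to x_2$. A diagonal argument using joint continuity of the phase map delivers indices $(\alpha_k,\beta_k,\gamma_k)$ with
\[
w_k:=s_{\beta_k}\tilde z_{\alpha_k}\to x_1\qquad\text{and}\qquad r_{\gamma_k}t_{\alpha_k}\tilde z_{\alpha_k}\to x_2.
\]
When $T$ is a group, setting $\tau_k:=r_{\gamma_k}t_{\alpha_k}s_{\beta_k}^{-1}$ gives $\tau_kw_k\to x_2$, so for large $k$ we have $w_k\in U_1$ and $\tau_kw_k\in U_2$, hence $\tau_k\in N_T(U_1,U_2)$ and T.T.\ of $\mathscr{X}$ follows.

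The hard part will be coordinating the three diagonals and executing the inversion $s_{\beta_k}^{-1}$, which is exactly what restricts the cited [A84, Lem.~2.10] to the group setting. For a general discrete monoid $T$, the inversion must be replaced by an Ellis-semigroup argument: fix a common minimal left ideal $\mathbb{I}\subset\beta T$, choose idempotents $u_i\in\mathbb{J}$ with $u_ix_i=x_i$, and exploit that $u_i\tilde x_i$ lies in the minimal orbit $\mathbb{I}x_i$; the required $T$-translate is then read off as a limit of operator products inside $\mathbb{I}$, rather than as a group conjugate.
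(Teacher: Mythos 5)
Your forward direction and your group-case argument for the converse are both sound, and the converse argument is a genuinely different route from the paper's. You verify $N_T(U_1,U_2)\neq\emptyset$ directly by net-chasing: lift a transitivity net from $Y$, use compactness to extract limits $\tilde x_1,\tilde x_2$ in the fibres over $y_1,y_2$, and then use proximality of $(\tilde x_i,x_i)$ together with almost periodicity of the $x_i$ to drag the lifted net into $U_1$ and $U_2$; the three-fold diagonalization does go through because only separate continuity of $x\mapsto sx$ is needed at each stage. This is essentially the Auslander--McMahon--van der Woude--Wu argument. The paper instead avoids all orbit-chasing: it takes a closed invariant $K\subseteq X$ with $\texttt{int}\,K\neq\emptyset$, shows (using density of a.p.\ points) that $L=\overline{X\setminus K}$ is also closed invariant with nonempty interior, pushes the decomposition $K\cup L=X$ down to $Y$, uses T.T.\ of $\mathscr{Y}$ plus Baire category to force $\phi K=Y$ or $\phi L=Y$, and then gets a contradiction from a single almost periodic pair in $\texttt{R}_\phi$, which proximality collapses to the diagonal. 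The paper's argument buys uniformity over groups and monoids at once and needs no inverses anywhere.

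That is exactly where your proposal has a genuine gap. The lemma is stated for compact \emph{dynamics}, which in this paper includes semiflows with $T$ a monoid, and the citation ``cf.\ [A84, Lem.~2.10] for $T$ a group'' signals that the non-group case is the point of restating it. Your endgame hinges on forming $\tau_k=r_{\gamma_k}t_{\alpha_k}s_{\beta_k}^{-1}$, and the proposed Ellis-semigroup substitute does not work as described: the obstruction is not merely bookkeeping but the failure of joint continuity of the $\beta T$-action. From $\tilde z_\alpha\to\tilde x_1$ and $t_\alpha\tilde z_\alpha\to\tilde x_2$ you cannot conclude that $q\tilde x_1=\tilde x_2$ for some $q\in\beta T$ (the pair $(\tilde x_1,\tilde x_2)$ is only a limit of graph points, a regionally-proximal-type datum, not an orbit datum), so there is no ``operator product inside $\mathbb{I}$'' whose restriction to $T$-approximants plays the role of $r_{\gamma_k}t_{\alpha_k}s_{\beta_k}^{-1}$. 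To cover monoids you would need to restructure the argument so that no translate ever has to be undone --- which is precisely what the paper's invariant-set decomposition accomplishes.
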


\begin{proof}
Necessity is obvious. Conversely, suppose $\mathscr{Y}$ is an M-dynamic. Let $K$ be an invariant closed set in $X$ with $\texttt{int}\,K\not=\emptyset$. We need to prove $K=X$.
Suppose the contrary that $K\not=X$. Then $X\setminus K\not=\emptyset$. Since a.p. points are dense in $X$, so $(X\setminus K)\cap t^{-1}\texttt{int}\,K=\emptyset$ for all $t\in T$. It is clear that $L:=\overline{X\setminus K}=X\setminus\texttt{int}\,K$ and $t(X\setminus K)\subseteq L$ for every $t\in T$. Thus $L$ is an invariant closed set in $X$ with $\texttt{int}\,L\not=\emptyset$ and $K\cup L=X$. As $\phi K\cup \phi L=\phi X=Y$, $\phi K$ or $\phi L$ must have a nonempty interior in $Y$, and so, by T.T. of $\mathscr{Y}$, $\phi K=Y$ or $\phi L=Y$.

Assume $\phi K=Y$. Let $x\in X\setminus K$ be an a.p. point. Then we can find $x^\prime\in K$ with $\phi x=\phi x^\prime$ such that $(x,x^\prime)$ is an a.p. point. Since $\phi$ is proximal, so $x=x^\prime$ and $K=X$. If $\phi L=Y$, then similarly it follows that $L=X$, contrary to $L\cap\texttt{int}\,K=\emptyset$ and $\texttt{int}\,K\not=\emptyset$.
Hence $K=X$ and $\mathscr{X}$ is T.T. and an M-dynamic. The proof is complete.
\end{proof}

\begin{lem}\label{3.9}
Let $\phi\colon\mathscr{Z}\rightarrow\mathscr{Y}$ be an extension of compact dynamics with $\mathscr{Y}$ minimal, let $\mathscr{N}$ be an $M$-dynamic in $\texttt{R}_\phi$. Then:
\begin{enumerate}[(1)]
\item If $\phi$ is proximal, then $N=\Delta_Z$.
\item If $\phi$ is a.p., then $\mathscr{N}$ is minimal.
\end{enumerate}
\end{lem}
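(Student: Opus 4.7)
My plan is to handle parts (1) and (2) separately, in both cases exploiting density of almost periodic points in $\mathscr{N}$ together with the structural hypothesis on $\phi$.

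For part (1), I would first show that every a.p.\ point of $\mathscr{N}$ must lie on $\Delta_Z$. Let $(z_1, z_2) \in N$ be a.p., so $\overline{T(z_1, z_2)}$ is minimal. Proximality of $\phi$ gives $(z_1, z_2) \in \texttt{P}_\phi$, whence $\overline{T(z_1, z_2)} \cap \Delta_Z \neq \emptyset$. Since $\Delta_Z$ is closed, $T$-invariant, and meets the minimal set $\overline{T(z_1, z_2)}$, minimality forces $\overline{T(z_1, z_2)} \subseteq \Delta_Z$, and in particular $(z_1, z_2) \in \Delta_Z$. Density of a.p.\ points in $N$ and closedness of $\Delta_Z$ then yield $N \subseteq \Delta_Z$. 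The reverse inclusion follows from the convention standing throughout \S\ref{3} that the relevant M-dynamics in $\texttt{R}_\phi$ contain $\Delta_Z$ (or, equivalently, under the implicit minimality of $\mathscr{Z}$ in the PI-tower context, since a T.T.\ closed $T$-invariant subset of the minimal $\Delta_Z \cong Z$ must equal the whole space).

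For part (2), I would use that $\phi$ being a.p.\ is equivalent (by \ref{s2}.0g) to $\texttt{RP}_\phi = \Delta_Z$, making $\phi$ a distal extension. Combined with minimality of $\mathscr{Y}$, this makes $(T, \texttt{R}_\phi)$ a pointwise almost periodic flow, so every pair in $N \subseteq \texttt{R}_\phi$ has minimal orbit closure and $N$ decomposes as a disjoint union of minimal subsets. The T.T.\ property of $\mathscr{N}$ is then to be leveraged to conclude that this decomposition has exactly one member, i.e., $\mathscr{N}$ is itself minimal.

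The main obstacle is the last step of part (2): passing from ``T.T.\ plus pointwise a.p.'' to ``minimal'' is not immediate in the compact Hausdorff (not necessarily metric) setting. A naive attempt---separating two supposed distinct minimal subsets $M_1, M_2 \subseteq N$ by disjoint open sets and invoking T.T.\ to find an orbit segment crossing between them---only produces some (possibly third) minimal subset meeting both neighborhoods, not an outright contradiction. The remedy I would deploy is the additional isometric structure of the a.p.\ extension $\phi$: the minimal subsets of $\texttt{R}_\phi$ are parametrized by cosets of a closed subgroup of the structure (compact) group of $\phi$ and are pairwise disjoint, so any T.T.\ subflow of $\texttt{R}_\phi$ must collapse to a single such coset, giving the desired minimality.
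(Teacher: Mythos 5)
Your part (1) is the argument the paper dismisses as ``obvious'': an a.p.\ pair in $N$ has minimal orbit closure which, by proximality of $\phi$, meets the closed invariant set $\Delta_Z$ and hence is contained in it, and density of a.p.\ points gives $N\subseteq\Delta_Z$. Your caveat about the reverse inclusion (one needs $\mathscr{Z}$ minimal, or the standing convention $\Delta_Z\subseteq N$) is a fair reading of the intended hypotheses, so this part is fine.

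For part (2) you take a genuinely different route. The paper's proof is a two-line reduction: the canonical map $\theta\colon\mathscr{N}\rightarrow\mathscr{Y}$, $(z_1,z_2)\mapsto\phi z_1$, is a distal extension (two $\theta$-proximal points of the same fiber are coordinatewise $\phi$-proximal, hence equal since an a.p.\ extension is distal), $\mathscr{N}$ is T.T.\ with dense a.p.\ points, and (\ref{s1}.7c) (i.e.\ \cite[Thm.~B.2.4]{CD20}: a T.T.\ compact dynamic admitting a distal extension onto a minimal dynamic is minimal) finishes. This sidesteps exactly the Ellis-problem obstacle you identify, uses no structure theory, works for semiflows, and does not require $\mathscr{Z}$ to be minimal. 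Your remedy --- the Furstenberg--Ellis coset picture for isometric extensions --- can be made to work for \emph{flows} with $\mathscr{Z}$ minimal: writing $Z=\tilde{Z}/H$ for a group extension $\tilde{Z}\rightarrow Y$ with compact structure group $K$, the assignment $\texttt{R}_\phi\rightarrow H\backslash K/H$ is continuous with fibers precisely the minimal subsets of $\texttt{R}_\phi$, so a T.T.\ invariant subset must map to a single point of this Hausdorff quotient, since otherwise preimages of separating open sets would give two disjoint nonempty invariant open subsets of $N$. But that mechanism is exactly the step you leave as a one-sentence gesture, and it costs generality: it needs $T$ a group, $\mathscr{Z}$ minimal, and the full structure theorem for a.p.\ extensions, none of which the paper's argument invokes. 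If you want a self-contained and maximally general proof, the cheaper fix is the paper's: pass to $\theta$ and quote the distal-over-minimal criterion.
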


\begin{proof}
(1) is obvious. For (2), we consider $\theta\colon\mathscr{N}\rightarrow\mathscr{Y}$, which is induced canonically by $\phi$. Then $\mathscr{N}$ is T.T. and $\theta$-distal. By (\ref{s1}.7c), $\mathscr{N}$ is minimal.
\end{proof}

\begin{thm}[{cf.~\cite{B77, Wo, Ge01} for $(1)\Leftrightarrow(2)$}]\label{3.10}
For $\pi\colon\mathscr{X}\rightarrow\mathscr{Y}$ the following are pairwise equivalent:
\begin{enumerate}[$(1)$]
\item $\pi$ is a PI-extension.
\item Every M-dynamic in $\texttt{R}_\pi$ is minimal.
\item $(T,\Delta_X)$ is the unique M-dynamic containing $\Delta_X$ in $\texttt{R}_\pi$.
\end{enumerate}
\end{thm}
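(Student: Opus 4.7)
The plan is to establish the chain $(1)\Rightarrow(2)\Rightarrow(3)\Rightarrow(1)$. Of these, the nontrivial content of $(1)\Leftrightarrow(2)$ is Bronstein's intrinsic characterization (cited in B77, Wo, Ge01), while the new contribution of the theorem is to sharpen (2) to (3); this sharpening is, however, essentially free given Theorem~\ref{3.5}.

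For $(1)\Rightarrow(2)$, I would recall the Bronstein argument as follows. By definition of PI, there is a proximal extension $\rho\colon\mathscr{X}^\prime\to\mathscr{X}$ such that $\pi\circ\rho$ admits a PI-tower. Given an M-subdynamic $\mathscr{N}\subseteq\texttt{R}_\pi$, one lifts it along $\rho\times\rho$; Lemma~\ref{3.8} applied to the proximal factor $\rho\times\rho\colon\mathscr{R}_{\pi\circ\rho}\to\mathscr{R}_\pi$ shows the lift is M. Then transfinite induction along the PI-tower $\mathscr{Y}=\mathscr{Y}_0\leftarrow\mathscr{Y}_1\leftarrow\cdots\leftarrow\mathscr{Y}_\vartheta$ forces the lift to be minimal: at successor ordinals the two clauses of Lemma~\ref{3.9} handle the proximal and a.p.\ links $\psi_\gamma^{\gamma+1}$ respectively, and limit ordinals pass via inverse limits. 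Projecting back by $\rho\times\rho$ preserves minimality, so $\mathscr{N}$ is minimal.

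For $(2)\Rightarrow(3)$, let $\mathscr{N}$ be an M-dynamic in $\texttt{R}_\pi$ with $\Delta_X\subseteq\mathscr{N}$. By (2), $\mathscr{N}$ is minimal. Choose any $(x,x)\in\Delta_X\subseteq\mathscr{N}$; since $\Delta_X$ is closed and invariant, $\overline{T(x,x)}\subseteq\Delta_X$, and minimality of $\mathscr{N}$ yields $\mathscr{N}=\overline{T(x,x)}\subseteq\Delta_X$. Hence $\mathscr{N}=\Delta_X$.

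For $(3)\Rightarrow(1)$, I would argue the contrapositive. If $\pi$ is not PI, Theorem~\ref{3.5} produces the canonical M-dynamic $(T,\texttt{M}_\infty)$ contained in $\texttt{R}_\pi$ with $\Delta_X\subsetneq\texttt{M}_\infty$, directly witnessing the failure of (3).

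The only substantive obstacle is the construction underpinning Theorem~\ref{3.5}, namely producing $\texttt{M}_\infty$ as $(\pi_X\times\pi_X)\overline{\Gamma[\mathbb{F}_\infty]}$ and verifying its topological transitivity via $\mathbb{F}_\infty=\mathbb{F}_\infty^\prime$; once that is in hand, the present theorem is a bookkeeping assembly of Lemmas~\ref{3.8}, \ref{3.9}, Theorem~\ref{3.5}, and the cited Bronstein result.
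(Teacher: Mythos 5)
Your proposal is correct and follows essentially the same route as the paper's proof: for $(1)\Rightarrow(2)$ the paper likewise lifts the given M-dynamic along a proximal map to a strictly PI extension of $\mathscr{Y}$ and then combines Lemma~\ref{3.8} (the lift is M) with the PI-tower and Lemma~\ref{3.9} (transfinite descent through the proximal and a.p.\ links), the only difference being that the paper takes the canonical Ellis--Glasner--Shapiro data $\phi\colon\mathscr{X}_\infty\rightarrow\mathscr{X}$ and $\psi\circ\pi_\infty$ where you take the $\rho$ furnished by the definition of PI --- an immaterial choice. The paper dismisses $(2)\Rightarrow(3)$ as obvious (your spelled-out argument is the intended one) and obtains $(3)\Rightarrow(1)$ from Theorem~\ref{3.5} exactly as you do.
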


\begin{proof}
\item $(1)\Rightarrow(2)$: Clearly by \ref{3.3}, $\mathscr{X}_\infty\cong \mathscr{Y}_\infty$ in the CD of \ref{3}.6. Set $\psi_\infty=\psi\circ\pi_\infty$. Let $(T,M)$ be an M-dynamic in $\texttt{R}_\pi$. There exists an invariant closed subset $N$ of $\texttt{R}_{\psi_\infty}$, which has a dense set of a.p. points such that $M=(\phi\times\phi)N$. Then by Lemma~\ref{3.8}, $(T,N)$ is an M-dynamic in $\texttt{R}_{\psi_\infty}$. Since $\psi_\infty$ is strictly PI, thus $N$ is minimal by using the PI-tower of $\psi_\infty$ and Lemma~\ref{3.9}, and $M$ is also minimal. Then (1) implies (2).

\item $(2)\Rightarrow(3)$: Obvious.

\item $(3)\Rightarrow(1)$: This follows easily from Theorem~\ref{3.5}. The proof is complete.
\end{proof}

\begin{se}[Ellis weak-mixing extension]\label{3.11}
Recall that for $\pi\colon\mathscr{X}\rightarrow\mathscr{Y}$:
\begin{enumerate}[$\bullet$]
\item $\mathscr{X}$ is called an \textit{Ellis $\pi$-weak-mixing extension} of $\mathscr{Y}$ or simply $\pi$ is \textit{Ellis weak-mixing}~\cite{G05, A04} if $\mathbb{F}=\mathbb{A}\mathbb{F}^\prime$, where $\mathbb{A}$ and $\mathbb{F}$ are defined as in Def.~\ref{3}.1.
\end{enumerate}

It is well known that if there is no the Bronstein condition and $Y\not=\{*\}$, an Ellis weak-mixing extension of minimal compact flows need not be a weak-mixing extension (see \cite{G05}); and, an Ellis weak-mixing minimal compact semiflow need not be weakly mixing (see \cite[Exa.~4.3.2]{CD20}).

Moreover, by using the canonical Ellis-Glasner-Shapiro construction of RIC extensions~\cite{EGS, G76, V77, A88, CD20} we can easily obtain the following:

\begin{lem}\label{3.12}
If $\pi$ is Ellis weak-mixing non-proximal, then it is not PI so that there exists non-minimal canonical M-dynamic $(T,\texttt{M}_\infty)$ in $\texttt{R}_\pi$.
\end{lem}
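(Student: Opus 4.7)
The plan is to argue the contrapositive---that Ellis weak-mixing together with PI forces $\pi$ to be proximal---and then invoke Theorem~\ref{3.5} to extract the non-minimal canonical M-dynamic. Translating the three hypotheses into the Ellis-group language of \ref{3.1}--\ref{3.3}, the goal is to derive $\mathbb{A}=\mathbb{F}$ from the simultaneous assumption of Ellis weak-mixing ($\mathbb{F}=\mathbb{A}\mathbb{F}'$) and PI ($\mathbb{F}_\infty\subseteq\mathbb{A}$), thereby contradicting non-proximality.

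The natural route is a transfinite induction upgrading the single-level identity $\mathbb{F}=\mathbb{A}\mathbb{F}'$ to $\mathbb{F}=\mathbb{A}\mathbb{F}_\alpha$ for every ordinal $\alpha$. The base case is trivial. At a limit ordinal $\lambda$, using $\mathbb{F}_\lambda=\bigcap_{\alpha<\lambda}\mathbb{F}_\alpha$ together with the $\tau$-compactness of the decreasing chain of $\tau$-closed subsets $\mathbb{A}\mathbb{F}_\alpha$, one gets $\mathbb{F}=\bigcap_{\alpha<\lambda}\mathbb{A}\mathbb{F}_\alpha=\mathbb{A}\mathbb{F}_\lambda$. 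The successor step reduces to the hereditary form of Ellis weak-mixing, $\mathbb{F}_\alpha=(\mathbb{A}\cap\mathbb{F}_\alpha)\mathbb{F}_{\alpha+1}$, which combined with $\mathbb{F}=\mathbb{A}\mathbb{F}_\alpha$ gives $\mathbb{F}=\mathbb{A}(\mathbb{A}\cap\mathbb{F}_\alpha)\mathbb{F}_{\alpha+1}=\mathbb{A}\mathbb{F}_{\alpha+1}$. Passing past the stabilization ordinal of the derived series yields $\mathbb{F}=\mathbb{A}\mathbb{F}_\infty\subseteq\mathbb{A}\cdot\mathbb{A}=\mathbb{A}$ under PI.

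A conceptual alternative, which is what the paper's phrase ``by using the canonical Ellis--Glasner--Shapiro construction'' alludes to, uses the diagram of \ref{3}.6 directly. Under PI, $\pi_\infty$ is an isomorphism and $\pi\circ\phi=\psi$ with $\phi$ proximal and $\psi$ admitting its canonical PI-tower. If every step of that tower were proximal, then $\psi$ would be proximal, and cancellation along the proximal surjection $\phi$ (as in the proof of Lemma~\ref{3.9}(1)) would make $\pi$ proximal too---a contradiction. Otherwise the first non-trivial a.p.\ step $\mathscr{Y}_{\gamma+1}\to\mathscr{Y}_\gamma$ yields an a.p.\ intermediate factor between $\mathscr{X}_\infty$ and $\mathscr{Y}$, which descends via the proximal $\phi$ to a non-trivial a.p.\ intermediate factor of $\pi$ (since proximal extensions preserve Ellis groups), violating $\mathbb{F}=\mathbb{A}\mathbb{F}'$.

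The main obstacle in either approach is the same piece of $\tau$-topology bookkeeping: verifying either the hereditary Ellis weak-mixing identity in the successor step, or the descent of the a.p.\ intermediate factor along $\phi$. Both are standard manipulations with $\tau$-closed subgroups and derived groups, carried out with the tools assembled in \cite[App.~A]{CD20}. Once $\pi$ is shown to be non-PI, Theorem~\ref{3.5} immediately provides $\Delta_X\varsubsetneq\texttt{M}_\infty$ together with $(T,\texttt{M}_\infty)$ as the desired non-minimal canonical M-dynamic inside $\texttt{R}_\pi$.
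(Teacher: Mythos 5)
Your overall strategy coincides with the paper's: propagate the Ellis weak-mixing identity $\mathbb{F}=\mathbb{A}\mathbb{F}'$ down the derived series to obtain $\mathbb{F}=\mathbb{A}\mathbb{F}_\infty$, so that PI (i.e.\ $\mathbb{F}_\infty\subseteq\mathbb{A}$ by \ref{3.3}) would force $\mathbb{F}=\mathbb{A}$ and hence $\pi$ proximal, a contradiction; then Theorem~\ref{3.5} supplies the non-minimal $(T,\texttt{M}_\infty)$. The limit step and the final deductions are fine. The genuine gap sits exactly where you flag ``the main obstacle'': the successor step. The identity you propose to use there, $\mathbb{F}_\alpha=(\mathbb{A}\cap\mathbb{F}_\alpha)\mathbb{F}_{\alpha+1}$, is not independent ``bookkeeping'' that can be verified first and then fed into the induction --- it is essentially equivalent to the conclusion of that very step. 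Granting $\mathbb{F}=\mathbb{A}\mathbb{F}_{\alpha+1}$ one deduces it easily (write $g=ah$ with $a\in\mathbb{A}$, $h\in\mathbb{F}_{\alpha+1}$ and note $a=gh^{-1}\in\mathbb{F}_\alpha$ when $g\in\mathbb{F}_\alpha$); but to establish it \emph{before} knowing $\mathbb{F}=\mathbb{A}\mathbb{F}_{\alpha+1}$ you have no handle, since the hypothesis $\mathbb{F}=\mathbb{A}\mathbb{F}'$ says nothing directly about the subgroup $\mathbb{F}_\alpha$ in isolation. As written, the induction is circular at each successor ordinal.

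The ingredient that breaks the circle is the lemma the paper actually cites, \cite[Lem.~X.4.1]{G76}, in the form $\mathbb{A}(\mathbb{A}\mathbb{B})'=\mathbb{A}\mathbb{B}'$ for the relevant $\tau$-closed subgroups $\mathbb{B}$. Applied with $\mathbb{B}=\mathbb{F}_\alpha$ it makes the successor step immediate and removes any need for a ``hereditary weak-mixing'' statement: if $\mathbb{F}=\mathbb{A}\mathbb{F}_\alpha$ then $\mathbb{F}'=(\mathbb{A}\mathbb{F}_\alpha)'$, whence $\mathbb{F}=\mathbb{A}\mathbb{F}'=\mathbb{A}(\mathbb{A}\mathbb{F}_\alpha)'=\mathbb{A}\mathbb{F}_\alpha'=\mathbb{A}\mathbb{F}_{\alpha+1}$. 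The paper's entire proof is this observation compressed into one line ($\mathbb{A}(\mathbb{A}\mathbb{F}')'=\mathbb{A}\mathbb{F}''$, hence $\mathbb{F}=\mathbb{F}_1\mathbb{A}=\mathbb{F}_2\mathbb{A}=\dotsm=\mathbb{F}_\infty\mathbb{A}$), followed by the same contradiction with non-proximality. Your alternative route through the Ellis--Glasner--Shapiro tower is conceptually attractive but carries the same unproved load --- the descent of the a.p.\ intermediate factor along the proximal $\phi$ is again the content of that same group-theoretic lemma --- so it does not avoid the gap.
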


\begin{proof}
Since $\mathbb{A}(\mathbb{A}\mathbb{F}^\prime)^\prime=\mathbb{A}\mathbb{F}^{\prime\prime}$ (by \cite[Lem.~X.4.1]{G76}), so $\mathbb{F}=\mathbb{F}_1\mathbb{A}=\mathbb{F}_2\mathbb{A}=\dotsm=\mathbb{F}_\infty\mathbb{A}$. If $\pi$ were PI, then $\mathbb{F}=\mathbb{A}$ and $\pi$ would be proximal, contrary to the hypothesis $\pi$ non-proximal. Thus, $\pi$ is not PI.
\end{proof}

Notice that the definition of Ellis weak-mixing is independent of the choice of $\pi_X$ and $\pi_Y$. Moreover, a proximal minimal flow is Ellis weak-mixing and PI~(cf.~\cite{G76, CD20}).
\end{se}

\section{Li-Yorke chaos of extension, sensitivity and asymptotically a.p. motions}\label{4}
We will introduce the definition of Li-Yorke chaotic extension in $\S$\ref{4.1}, provide a sufficient condition for sensitivity on initial conditions in $\S$\ref{4.2}, and consider asymptotically a.p. motions in $\S$\ref{4.3}.

\begin{se}[Li-Yorke chaos of extension]\label{4.1}
Let $\pi\colon\mathscr{X}\rightarrow\mathscr{Y}$ be a nontrivial extension of minimal compact metric dynamics. Let $\rho$ be a metric on $X$.

Then $(x,x^\prime)\in\texttt{P}_\pi$ iff $(x,x^\prime)\in \texttt{R}_\pi$ and there is a sequence $\{t_n\}_{n=1}^\infty$ in $T$ with $\lim\limits_{n\to\infty}\rho(t_nx,t_nx^\prime)=0$.
Since $\pi$ is nontrivial, so $\texttt{R}_\pi\not=\Delta_X$. Recall that whenever $T=\mathbb{Z}_+$ and $(x,x^\prime)\in X\times X$ with $x\not=x^\prime$, we say $(x,x^\prime)$ is a Li-Yorke pair iff
$${\liminf}_{n\to\infty}\rho(nx,nx^\prime)=0\quad \textrm{and}\quad {\limsup}_{n\to\infty}\rho(nx,nx^\prime)>0.$$
In view of this, the definition of Li-Yorke chaos has been generalized to group actions; see, e.g., \cite{KL, R, DT, A18} and so on. Moreover, Devaney's chaos implies the Li-Yorke's chaos (cf., e.g.,~\cite{HY} for $\mathbb{Z}$-flow, \cite[Prop.~2.21]{DT} and \cite{A18} for $T$ an abelian group).
We now introduce the Li-Yorke chaos for extensions of minimal dynamics as follows:

\begin{4.1a}
Let $L$ be a closed subset of $X\times X$ such that $\Delta_X\varsubsetneq L\subseteq \texttt{R}_\pi$.
\begin{enumerate}[(1)]
\item We say $(x,x^\prime)\in X\times X$ is a \textit{Li-Yorke pair for $\pi$ rel.} $L$ if $(x,x^\prime)\in \texttt{R}_\pi$ and $L\subseteq\overline{T(x,x^\prime)}$.
\item A set $S$ in $X$ is called \textit{Li-Yorke scrambled for $\pi$ rel. $L$} if every pair $(x,x^\prime)\in S\times S\setminus\Delta_X$ is a Li-Yorke pair for $\pi$ rel. $L$.
\end{enumerate}
\end{4.1a}

\begin{4.1b}
Let $\mathscr{L}=(T,L)$ be an M-dynamic in $\texttt{R}_\pi$, and define $\texttt{Trans}\,(\mathscr{L})$ to be the set
$$
\texttt{Trans}\,(\mathscr{L})=\{\tilde{x}\in L\,|\,\overline{T\tilde{x}}=L\}.
$$
If $\Delta_X\varsubsetneq L$, then each point of $\texttt{Trans}\,(\mathscr{L})$ is a Li-Yorke pair for $\pi$ rel. $L$.
\end{4.1b}

Since $\mathscr{L}$ is an M-dynamic such that $\Delta_X\varsubsetneq L\subseteq \texttt{R}_\pi$, so if $(x,x^\prime)\in\texttt{Trans}\,(\mathscr{L})$, then the set of $t\in T$ at which $Tx$ and $Tx^\prime$ are sufficiently close and the set of $t\in T$ at which $Tx$ and $Tx^\prime$ are far away are both very big (it is at least syndetic). In view of this, the Li-Yorke chaos here is much stronger than the classical sense.
\end{se}

\begin{se}[Sensitive to initial conditions]\label{4.2}
Let $\mathscr{X}$ be a dynamic with $X$ a Hausdorff uniform space not necessarily compact in this part.

\begin{4.2a}
$\mathscr{X}$ is said to be \textit{bounded} if given $\alpha\in\mathscr{U}_X$ there exists a point $x_0\in X$ and a compact set $K$ in $T$ such that $\alpha[Kx_0]=X$. If $\mathscr{X}$ is not bounded, then it called \textit{unbounded}.
\end{4.2a}

Clearly, if $(X,\rho)$ is a metric space such that there is a point $x_0\in X$ such that $\{\rho(x_0,x)\,|\,x\in X\}$ is unbounded in $\mathbb{R}$, then $\mathscr{X}$ is unbounded. Moreover, we can easily construct a compact metric space that can support a unbounded pointwise a.p. flow.
By virtue of these, our ``unbounded'' condition is then much general than the one introduced in \cite[($D_1$)]{Se} for only metric spaces.

In fact,
\begin{(i)}
If $\mathscr{X}$ is bounded having a dense set of a.p. points, then $\mathscr{X}$ is T.T. Moreover, if $\mathscr{X}$ is bounded, then it is totally bounded.
\end{(i)}

\begin{proof}
Suppose this is not true. Then $\mathscr{X}$ would be bounded and thus it would be T.T. Indeed, let $U,V$ be nonempty open sets in $X$. Then there exists a point $x_0\in X$ and $s,t\in T$ such that $sx_0\in U$ and $tx_0\in V$. Further, there exists an open $W\in\mathfrak{N}_{x_0}$ such that $sW\subseteq U$ and $tW\subseteq V$. Since the a.p. points are dense in $X$, we can pick an a.p. point, say $x_1$, in $sW$ and some $s_1\in T$ such that $s_1x_1\in W$. So $ts_1x_1\in V$ and $N_T(U,V)\not=\emptyset$.
\end{proof}

\begin{(ii)}
Let $T$ be $\sigma$-compact. If $\mathscr{X}$ is a pointwise a.p. non-minimal compact dynamic, then $\mathscr{X}$ is unbounded.
\end{(ii)}

\begin{proof}
If this is false, then $\mathscr{X}$ is T.T. by (i) and thus, $\mathscr{X}$ would be minimal by (\ref{s1}.7a) in $\S$\ref{s1}.
\end{proof}

\begin{4.2b}
We say that $\mathscr{X}$ is \textit{sensitive to initial conditions on} $X_0$, where $X_0\subseteq X$, if there exists an index $\varepsilon\in\mathscr{U}_X$ such that for every $x\in X_0$ and all $U\in\mathfrak{N}_x(X)$ there exists some $y\in U$ and some $t\in T$ with $(tx,ty)\notin\varepsilon$. If $\mathscr{X}$ is T.T. and ``periodic points'' are dense in $X$, then we say $\mathscr{X}$ is \textit{Devaney chaotic}.

Then $\mathscr{X}$ is sensitive iff we can find $\varepsilon\in\mathscr{U}_X$ such that for every open set $U\not=\emptyset$ in $X$, there exist $x_1,x_2\in U$ and $t\in T$ with $(tx_1,tx_2)\notin\varepsilon$. So, if $\mathscr{X}$ is sensitive, then $X$ has no isolated point.
\end{4.2b}

\begin{4.2c}
If $X_0$ is dense in $X$ and $\mathscr{X}$ is sensitive to initial conditions on $X_0$, then $\mathscr{X}$ is sensitive to initial conditions on $X$.
\end{4.2c}

\begin{proof}
Obvious.
\end{proof}

\begin{4.2d}[{cf.~\cite[Thm.~1, Thm.~1.1]{Se} for $T=\mathbb{R}_+$ or $\mathbb{Z}_+$ with $X$ a unbounded metric space, under a stronger definition of a.p. points}]
Let $\mathscr{X}$ be a unbounded M-flow; then $\mathscr{X}$ is sensitive to initial conditions on $X$.
\end{4.2d}

\begin{proof}
Let $X_0$ be the set consist of a.p. points of $\mathscr{X}$. Then $X_0$ is dense in $X$. By Lemma~\ref{4.2}c, it is enough to show that $\mathscr{X}$ is sensitive to initial conditions on $X_0$. Suppose this is false. Let $\alpha$, $\varepsilon\in\mathscr{U}_X$ with $\varepsilon^2\subseteq\alpha$. Then there is an index $\delta\in\mathscr{U}_X$ and a point $x_0\in X_0$ such that for all $y\in\delta[x_0]$, $(ty,tx_0)\in\varepsilon$ for all $t\in T$. Moreover, since $x_0$ is a.p. under $\mathscr{X}$, there is a compact set $K$ in $T$ such that
$KN_T(x_0,\delta[x_0])=T$. Thus
$T\delta[x_0]\subseteq\varepsilon[K\delta[x_0]]\subseteq\varepsilon^2[Kx_0]$ and $T\delta[x_0]\subseteq\alpha[Kx_0]$.
Since $\mathscr{X}$ is T.T., $X=\overline{T\delta[x_0]}$ and $X=\overline{\alpha[Kx_0]}$. This is contrary to that $X$ is unbounded. The proof is completed.
\end{proof}

It should be noted that the unboundedness of $\mathscr{X}$ is critical for Theorem~\ref{4.2}d; for example, a minimal equicontinuous compact flow is a nonsensitive M-flow.

\begin{4.2e}[{cf.~\cite[Thm.~1.2]{Se} for $T=\mathbb{Z}_+$ and $\mathbb{R}_+$ and $\texttt{RE}(\mathscr{X})$ dense}]
Let $\mathscr{X}$ be an M-flow with $X$ a complete noncompact space such as $X=\mathbb{R}^n$. Then $\mathscr{X}$ is sensitive to initial conditions on $X$.
\end{4.2e}

\begin{proof}
First note that $\mathscr{X}$ is unbounded; for otherwise, $X$ would be totally bounded complete so that $X$ would be compact. Then by Theorem~\ref{4.2}d, $\mathscr{X}$ is sensitive.
\end{proof}

\begin{4.2f}
A point $x\in X$ is called a \textit{Birkhoff recurrent point} under $\mathscr{X}$\cite{NS, Se, ChD} if given $\varepsilon\in\mathscr{U}_X$ there exists a compact set $K$ in $T$ such that $Tx\subseteq\varepsilon[Ktx]$ for all $t\in T$. The set of Birkhoff recurrent points of $\mathscr{X}$ is denoted $\texttt{RE}(\mathscr{X})$ in \cite{Se}.

A Birkhoff recurrent point must be an a.p. point. In fact, in compact dynamics a point is a.p. iff it is Birkhoff recurrent (cf.~\cite[Thm.~4.1]{ChD}); in locally compact metric flows, a point is a.p. iff it is Birkhoff recurrent (cf.~\cite[Cor.~4.2]{ChD}). However, in locally compact metric semiflows an a.p. point need not be Birkhoff recurrent (see \cite[Rem.~3.15(c)]{AD} for a counterexample).

So in Theorem~\ref{4.2}d, our condition ``a.p. points are dense in $X$'' is generally weaker than Seifert's condition ``$\texttt{RE}(\mathscr{X})$ is dense in $X$'' in \cite{Se}.
\begin{enumerate}[$\bullet$]
\item Let $\mathscr{X}$ be a flow. If $x\in X$ such that given $\varepsilon\in\mathscr{U}_X$ there exists a compact set $L\subseteq T$ with $\varepsilon[tx]\cap Latx\not=\emptyset$ for all $t,a\in T$, then $x\in \texttt{RE}(\mathscr{X})$ (cf.~\cite[p.~1721]{Se} for $T=\mathbb{R}_+$ and $\mathbb{Z}_+$).
\end{enumerate}
\begin{proof}
Let $s\in T$ and $t\in T$. Select $a\in T$ with $s=at$ or equivalently $t=a^{-1}s$. Then $tx\in\varepsilon[Lsx]$. This shows that $x\in \texttt{RE}(\mathscr{X})$.
\end{proof}

Thus, $x$ is Birkhoff recurrent under $\mathscr{X}$ iff $(T,\overline{Tx})$ is a weakly a.p. subflow of $\mathscr{X}$ (in the sense of Gottschalk \cite{AD}: Given $\varepsilon\in\mathscr{U}_X$ there exists a compact set $L$ in $T$ such that for all $y\in\overline{Tx}$, $Lt\cap N_T(y,\varepsilon[y])\not=\emptyset\ \forall t\in T$).
\end{4.2f}

\begin{4.2g}
By $\texttt{Equi}(\mathscr{X})$ we denote the set of points at which $\mathscr{X}$ is equicontinuous/$T$-stable. That is, $x\in\texttt{Equi}(\mathscr{X})$ iff given $\varepsilon\in\mathscr{U}_X$ there is a member $U\in\mathfrak{N}_x$ such that $(tx,ty)\in\varepsilon$ for all $t\in T$ and all $y\in U$. See Def.~IV and Def.~a in $\S$\ref{s2.2.1}.
\end{4.2g}

\begin{4.2h}
Let $\mathscr{X}$ be a unbounded T.T. flow. If $\texttt{Equi}\,(\mathscr{X})\not=\emptyset$, then the a.p. points are not dense in $X$ (and consequently $\texttt{RE}(\mathscr{X})$ is not dense in $X$ and $\mathscr{X}$ is not an M-flow).
\end{4.2h}

\begin{proof}
Suppose this is false. Then by Theorem~\ref{4.2}d, $\mathscr{X}$ would be sensitive to initial conditions on $X$. So there would be an index $\varepsilon\in\mathscr{U}_X$ such that for $x\in\texttt{Equi}(\mathscr{X})$ and $U\in\mathfrak{N}_x$ there exists a point $y\in U$ with $(tx,ty)\notin \varepsilon$ for some $t\in T$. This contradicts equicontinuity of $\mathscr{X}$ at $x$. The proof is completed.
\end{proof}

\begin{4.2i}
If ``a.p.'' points are replaced by ``Birkhoff recurrent'' points, then Theorem~\ref{4.2}d and Corollary~\ref{4.2}h still hold for $T$ a semigroup. By virtue of this, Corollary~\ref{4.2}h is in conflict with an open question in \cite[p.~1725]{Se}.
\end{4.2i}

Using ``$\mathscr{X}$ non-minimal'' instead of ``$\mathscr{X}$ unbounded'', by an argument different with that of Theorem~\ref{4.2}d and \cite{HY,CC,SK}, we can conclude the following:

\begin{4.2j}[{cf.~\cite{GW} for compact M-dynamics; \cite{B92,HY,CC,SK} for Devaney chaotic dynamics}]
Let $\mathscr{X}$ be a non-minimal M-dynamic. Then $\mathscr{X}$ is sensitive to initial conditions on $X$.
\end{4.2j}

\begin{proof}
Since $X$ is a regular space and $\mathscr{X}$ is non-minimal, there exist $\varepsilon\in\mathscr{U}_X$, $y_0\in X$, and $M$ a minimal set in $X$ such that $(y_1,y_2)\notin\varepsilon$ for all $y_1\in\varepsilon[M]$ and all $y_2\in\varepsilon[y_0]$. Pick $\alpha\in\mathscr{U}_X$ with $\alpha^2\subseteq\varepsilon$. To show $\mathscr{X}$ sensitive, suppose this is false. Then we can choose $x_0\in X$ and $\delta\in\mathscr{U}_X$ such that $y_1,y_2\in\delta[x_0]$ implies that $(ty_1,ty_2)\in\alpha$ for all $t\in T$. Noting $N_T(\delta[x_0],\varepsilon[M])$ is syndetic thick in $T$ and $N_T(\delta[x_0],\varepsilon[y_0])$ is syndetic in $T$, we can pick $s\in N_T(\delta[x_0],\varepsilon[M])\cap N_T(\delta[x_0],\varepsilon[y_0])$. Moreover, we can pick $y_1,y_2\in\delta[x_0]$ such that $sy_1\in\varepsilon[M]$ and $sy_2\in\varepsilon[y_0]$, and $(sy_1,sy_2)\in\alpha$, contrary to $(sy_1,sy_2)\notin\varepsilon$ for $\alpha^2\subseteq\varepsilon$. The proof is completed.
\end{proof}

\begin{4.2k}
Since $X$ need not be locally compact, we don't know whether or not a unbounded M-flow is non-minimal. On the other hand, any nonequicontinuous minimal flow is sensitive. So ``non-minimal'' is not a necessary condition for sensitivity, and Theorem~\ref{4.2}d is not a corollary of Theorem~\ref{4.2}j.
\end{4.2k}
\end{se}

\begin{se}[Asymptotically a.p. motions]\label{4.3}
In this part, let $\mathscr{X}$ be a semiflow with $X$ a Hausdorff uniform space. We first define
\begin{enumerate}[(i)]
\item the $\omega$-limit set $\omega[x]=\{w\in X\,|\,\forall\, U\in\mathfrak{N}_w\textrm{ and }s\in T, \exists\,t\in T\textrm{ s.t. }tsx\in U\}$. Clearly, for $T=\mathbb{R}_+$ or $\mathbb{Z}_+$, $w\in\omega[x]$ iff $\exists\, t_n\to\infty$ such that $t_nx\to w$. Moreover, the $P$-limit set of $x$, $P_{\!x}=\omega[x]$, for $P=T$. If $T$ is a group, then $\omega[x]=\overline{Tx}$ will give us no new information.

\item $\texttt{AAP}(\mathscr{X})=\{x\in X\,|\, \exists\textrm{ an a.p. point }y\textrm{ s.t. }\forall\,\varepsilon\in\mathscr{U}_X, \exists\, s\in T\textrm{ s.t. }(tsx,tsy)\in\varepsilon\,\forall t\in T\}$. Clearly, for $T=\mathbb{R}_+$ or $\mathbb{Z}_+$ and $(X,\rho)$ a metric space, $x\in\texttt{AAP}(\mathscr{X})$ iff $\exists$ an a.p. point $y$ such that $\rho(tx,ty)\to 0$ as $t\to\infty$.
\end{enumerate}
We call $\texttt{AAP}(\mathscr{X})$ the \textit{set of asymptotically a.p. points of} $\mathscr{X}$. By virtue of \ref{4.2}f, this is equivalent to that defined by Seifert~\cite[(iv)]{S1} under compact ambit.

\begin{4.3a}
Let $T$ be such that $\bigcap_{s\in F}Ts\not=\emptyset$ for every finite set $F\subset T$. If $x\in X$ such that $\overline{Tx}$ is compact, then $\omega[x]\not=\emptyset$ is compact invariant.
\end{4.3a}

\begin{proof}
By $\omega[x]=\bigcap_{s\in T}\overline{Tsx}$ and compactness, it follows easily that $\omega[x]\not=\emptyset$ is compact. The invariance is evident.
\end{proof}

\begin{4.3b}
Let $T$ be such that $\bigcap_{s\in F}Ts\not=\emptyset$ for every finite set $F\subset T$. Let $x\in X$ such that $\overline{Tx}$ is compact and $\omega[x]\subseteq\texttt{Equi}\,(\mathscr{X})$. Then $(T,\omega[x])$ is a minimal subsemiflow of $\mathscr{X}$, $\omega[x]\subseteq \texttt{RE}(\mathscr{X})$, and $x\in\texttt{AAP}(\mathscr{X})$. If, in addition, $T$ is an amenable semigroup, then $(T,\omega[x])$ is an a.p. subdynamic of $\mathscr{X}$.
\end{4.3b}

\begin{proof}
By Lemma~\ref{4.3}a, $\omega[x]$ is an invariant compact nonempty subset of $X$. Let $M$ be a minimal subset of $\omega[x]$. If $M\not=\omega[x]$, then we can take a point $w\in\omega[x]\setminus M$. Further, there exists an index $\varepsilon\in\mathscr{U}_X$ such that $\varepsilon[w]\cap\varepsilon[M]=\emptyset$. Let $y\in M$. Select $U\in\mathfrak{N}_y$ with $U\subset\varepsilon[M]$ such that if $sx\in U$ then $tsx\in\varepsilon[M]$ for all $t\in T$. This is contrary to $w\in\omega[x]$. Thus $\omega[x]$ is a minimal set in $X$. This also shows that $\omega[x]\subseteq \texttt{RE}(\mathscr{X})$.

Now we will prove that $x\in\texttt{AAP}(\mathscr{X})$. By a theorem of Auslander-Ellis (cf., e.g., \cite[Thm.~8.7]{F81}), we can find a point $y\in\omega[x]$ such that $x$ is proximal to $y$. Let $\varepsilon\in\mathscr{U}_X$. We note here that since $\omega[x]\subseteq\texttt{Equi}(\mathscr{X})$ is compact, there is an index $\delta\in\mathscr{U}_X$ such that $(z,w)\in\delta$ and $w\in\omega[x]$ implies that $(tz,tw)\in\varepsilon$ for all $t\in T$. Now, we can take some $s\in T$ such that $(sx,sy)\in\delta$, and further, $(tsx,tsy)\in\varepsilon$ for all $t\in T$. Thus $x\in\texttt{AAP}(\mathscr{X})$.

Finally, If $T$ is an amenable semigroup, then by \cite[Thm.~1.15]{AD} it follows that $(T,\omega[x])$ is an a.p. subdynamic of $\mathscr{X}$.
The proof is completed.
\end{proof}

\begin{4.3c}[{cf.~\cite[Thm.~1]{S1} for $d=1$}]
Let $T=\mathbb{R}_+^d$ or $\mathbb{Z}_+^d$ where $d\ge1$; let $\overline{Tx}$ be compact such that $\omega[x]\subseteq\texttt{Equi}\,(\mathscr{X})$. Then $x\in\texttt{AAP}(\mathscr{X})$, $\omega[x]\subset \texttt{RE}(\mathscr{X})$, and $(T,\omega[x])$ is an a.p. subsemiflow of $\mathscr{X}$.
\end{4.3c}

\begin{proof}
Clearly, $\mathbb{R}_+^d$ and $\mathbb{Z}_+^d$ both satisfy the condition of Lemma~\ref{4.3}a (by Remark~\ref{4.3}g below). Then Theorem~\ref{4.3}b follows Corollary~\ref{4.3}c.
\end{proof}

\begin{4.3d}
Let $x\in X$ such that
\begin{enumerate}[(1)]
\item $\omega[x]\not=\emptyset$ is compact with $\omega[y]=\omega[x]$ for all $y\in\omega[x]$ and
\item $\omega[x]\cap\texttt{Equi}\,(\mathscr{X})\not=\emptyset$.
\end{enumerate}
Then $x\in\texttt{AAP}(\mathscr{X})$.
\end{4.3d}

\begin{proof}
By condition (1), $\omega[x]$ is a compact minimal set of $\mathscr{X}$. Then from condition (2), it follows that $\omega[x]\subseteq\texttt{Equi}\,(\mathscr{X})$. Now using Theorem~\ref{4.3}b, we have that $x\in\texttt{AAP}(\mathscr{X})$.
\end{proof}

\begin{4.3e}[{cf.~\cite[Lem.~1]{S1} for $T=\mathbb{R}_+$}]
In Theorem~\ref{4.3}b, if $T$ is an amenable semigroup, then for $x\in\texttt{AAP}(\mathscr{X})$, the corresponding shadowing a.p. motion as in Def.~\ref{4.3}(ii) is unique.
\end{4.3e}

\begin{proof}
Suppose this is false. Then there are two distinct points $y_1,y_2\in\omega[x]$ such that for every $\varepsilon\in\mathscr{U}_X$ there are $s_1,s_2\in T$ such that $(ts_ix,ts_iy_i)\in\varepsilon\ \forall t\in T$, for $i=1,2$. By $Ts_1\cap Ts_2\not=\emptyset$ for all $s_1,s_2\in T$, it follows that $y_1$ is proximal to $y_2$ under $\mathscr{X}$. However, since $y_1,y_2\in\omega[x]$  and $(T,\omega[x])$ is a.p., thus $(y_1,y_2)$ is a.p. under $\mathscr{X}\times \mathscr{X}$ so that $y_1=y_2$. The proof is complete.
\end{proof}

It should be mentioned that our unicity proof here is much simpler than Seifert's proof for the special case that $T=\mathbb{R}_+$ and $X$ a complete metric space \cite[Lem.~1]{S1}. There Seifert need to use $y_1,y_2\in \texttt{RE}(\mathscr{X})$ instead of our ``proximality'' argument.

\begin{4.3f}
Recall that a topological semigroup $T$ is called a ``$C$-semigroup'' \cite{KM} if $\overline{T\setminus Ts}$ is compact for all $s\in T$. For example, $(\mathbb{R}_+,+)$ and $(\mathbb{Z}_+,+)$ are $C$-semigroups. Then:
\begin{enumerate}[$\bullet$]
\item If $T$ is a noncompact $C$-semigroup, then $\bigcap_{i=1}^nTs_i\not=\emptyset$ for all $n\ge2$ and $s_1,\dotsc,s_n\in T$.
\end{enumerate}
\end{4.3f}

\begin{proof}
Let $n\ge2$ and $s_1,\dotsc,s_n\in T$. If $\bigcap_{i=1}^nTs_i=\emptyset$, then $\left(T\setminus Ts_1\right)\cup\dotsm\cup\left(T\setminus Ts_n\right)=T$ so that $T$ is compact, contrary to the hypothesis. Thus $\bigcap_{i=1}^nTs_i\not=\emptyset$.
\end{proof}

\begin{4.3g}
Let $T=\mathbb{R}_+^d$ and $\mathbb{Z}_+^d$, where $d\ge1$ is an integer. Then $\bigcap_{i=1}^n(T+s_i)\not=\emptyset$ for all $n\ge2$ and $s_1,\dotsc,s_n\in T$.
\end{4.3g}

\begin{proof}
Given $n\ge2$, let $K=\{o, s_1,\dotsc,s_n\}$, which is a compact subset of $G$, where $G=\mathbb{R}^d$ or $\mathbb{Z}^d$ and $o=(0,\dotsc,0)$. Noting that $T$ is a thick semigroup in $G$. Then by Lemma~\ref{2.7}a, $\bigcap_{i=1}^n(T+s_i)$ is a thick set in $G$ so that $\bigcap_{i=1}^n(T+s_i)\not=\emptyset$. The proof is completed.
\end{proof}
\end{se}

\section{Li-Yorke chaos of non-PI extensions}\label{5}

If $\pi\colon\mathscr{X}\rightarrow\mathscr{Y}$ is a non-PI extension of minimal compact flows, then we can define a canonical M-flow $(T,\texttt{M}_\infty)$ such that $\Delta_X\varsubsetneq \texttt{M}_\infty\subseteq \texttt{R}_\pi$ as in Def.~\ref{3.4} in $\S$\ref{3}. Moreover, $\texttt{M}_\infty$ has been described by Theorem~\ref{3.7}.
We can then state our main result of this section as follows:

\begin{thm}\label{5.1}
Let $\pi\colon\mathscr{X}\rightarrow\mathscr{Y}$ be a non-PI extension of minimal compact metric flows. Then there exists a residual set $Y_{\textrm{LY}}$ in $Y$ such that for every $y\in Y_{\textrm{LY}}$ and for generic $x\in\pi^{-1}y$ there is a residual set $S[x]$ in $\texttt{M}_\infty[x]$ with the property that $(x,x^\prime)$ is a Li-Yorke pair for $\pi$ rel. $\texttt{M}_\infty$ for each $x^\prime\in S[x]$.
\end{thm}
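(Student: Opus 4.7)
The plan is to exhibit the Li-Yorke pairs via the canonical M-flow $(T,\texttt{M}_\infty)$ of Theorem~\ref{3.5}, and then extract the claimed fibrewise residuality by running a double Kuratowski-Ulam argument inside the EGS tower of \ref{3}.6 and descending. Since $\pi$ is non-PI, Theorem~\ref{3.5} provides a compact metric M-subflow $(T,\texttt{M}_\infty)$ of $\mathscr{R}_\pi$ with $\Delta_X\varsubsetneq\texttt{M}_\infty$, and I would set $G:=\texttt{Trans}(T,\texttt{M}_\infty)$, a dense $G_\delta$ in $\texttt{M}_\infty$; by Lemma~\ref{4.1}b every point of $G$ is automatically a Li-Yorke pair for $\pi$ rel.\ $\texttt{M}_\infty$. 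What remains is to promote the residuality of $G$ in $\texttt{M}_\infty$ to the claimed residuality in the fibres of the projection $\texttt{M}_\infty\to Y$.

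The Baire step is carried out one level up. By Theorem~\ref{3.7}, $\texttt{M}_\infty=(\phi\times\phi)\texttt{R}_{\pi_\infty}$, where $\pi_\infty\colon\mathscr{X}_\infty\to\mathscr{Y}_\infty$ is RIC weakly mixing, so $(T,\texttt{R}_{\pi_\infty})$ is itself an M-flow and its transitive set $\widetilde G$ is a dense $G_\delta$ in $\texttt{R}_{\pi_\infty}$. Because $\pi_\infty$ is RIC it is open, and then $\pi_\infty\colon X_\infty\to Y_\infty$ and the coordinate projections $p_1,p_2\colon\texttt{R}_{\pi_\infty}\to X_\infty$ are continuous open surjections between Polish spaces. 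I would apply the Kuratowski-Ulam theorem for open maps first to $\pi_\infty\circ p_1\colon\texttt{R}_{\pi_\infty}\to Y_\infty$ to obtain a dense $G_\delta$ set $Y_\infty^\ast\subseteq Y_\infty$ such that, for every $y^\ast\in Y_\infty^\ast$, $\widetilde G\cap\bigl(\pi_\infty^{-1}(y^\ast)\bigr)^2$ is residual in $\bigl(\pi_\infty^{-1}(y^\ast)\bigr)^2$; then a second time to the first-coordinate projection of $\bigl(\pi_\infty^{-1}(y^\ast)\bigr)^2$ to produce, for each such $y^\ast$, a dense $G_\delta$ set $W(y^\ast)\subseteq\pi_\infty^{-1}(y^\ast)$ such that for every $w\in W(y^\ast)$ the slice $\widetilde G[w]=\{w^\prime\in\pi_\infty^{-1}(y^\ast)\,:\,(w,w^\prime)\in\widetilde G\}$ is residual in $\pi_\infty^{-1}(y^\ast)$.

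To finish, I would descend through $\phi$ and $\psi$. Since $\phi\times\phi$ is an equivariant continuous surjection onto the M-flow $\texttt{M}_\infty$, images of transitive points are transitive, so $(\phi\times\phi)\widetilde G\subseteq G$; in particular, for $y^\ast\in Y_\infty^\ast$ and $w\in W(y^\ast)$ we get $\phi(\widetilde G[w])\subseteq G[\phi w]\subseteq\texttt{M}_\infty[\phi w]$ with $\phi w\in\pi^{-1}(\psi y^\ast)$. One then sets $Y_{\mathrm{LY}}=\psi(Y_\infty^\ast)$ and, for each $y\in Y_{\mathrm{LY}}$ with chosen lift $y^\ast\in Y_\infty^\ast\cap\psi^{-1}(y)$, declares the generic $x\in\pi^{-1}(y)$ to be those of the form $x=\phi w$ with $w\in W(y^\ast)$, taking $S[x]:=\phi(\widetilde G[w])$. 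The main obstacle is precisely this descent: neither $\phi$ (proximal) nor $\psi$ (PI) need be open, so the ordinary Kuratowski-Ulam theorem does not automatically upgrade the residuality of $Y_\infty^\ast$ in $Y_\infty$ to residuality of $Y_{\mathrm{LY}}$ in $Y$, nor guarantee that the chosen $x$'s form a residual subset of $\pi^{-1}(y)$. This is handled by exploiting that in the minimal compact metric category both $\phi$ and $\psi$ are semi-open and preserve Baire category on fibres, together with the identity $\pi\circ\phi=\psi\circ\pi_\infty$ linking the two fibrations, so that the dense $G_\delta$ conclusions upstairs push forward to dense $G_\delta$ conclusions downstairs along orbits.
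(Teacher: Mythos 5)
Your opening is right and matches the paper: Theorem~\ref{3.5} gives the M-flow $(T,\texttt{M}_\infty)$, $\texttt{Trans}(T,\texttt{M}_\infty)$ is residual in the compact metric space $\texttt{M}_\infty$, and Lemma~\ref{4.1}b makes every transitive point a Li-Yorke pair rel.\ $\texttt{M}_\infty$. The detour through the EGS tower, however, opens two genuine gaps. First, $\mathscr{X}_\infty$ and $\mathscr{Y}_\infty$ are quotients of the universal minimal flow $\mathscr{I}=(T,\mathbb{I})$ and are in general \emph{not} metrizable, so $\texttt{R}_{\pi_\infty}$ is not Polish; the set $\widetilde G=\texttt{Trans}(T,\texttt{R}_{\pi_\infty})$ need not be residual --- it can even be empty (the paper itself stresses in $\S$\ref{6} that a T.T.\ flow on a non-separable space need not be point-transitive) --- and the Kuratowski-Ulam theorem you invoke upstairs has no footing. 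Second, even granting the upstairs conclusions, the descent fails as stated: for a fixed lift $y^\ast\in\psi^{-1}(y)$ the restriction of $\phi$ to $\pi_\infty^{-1}(y^\ast)$ need not map \emph{onto} $\pi^{-1}(y)$ (only the union over all lifts covers the fibre), so $\phi\bigl(W(y^\ast)\bigr)$ may sit inside a proper, possibly nowhere dense, subset of $\pi^{-1}(y)$; and continuous images of residual sets under $\phi$, $\psi$, or their fibre restrictions are not residual in general. The sentence ``both $\phi$ and $\psi$ are semi-open and preserve Baire category on fibres'' is precisely the assertion that needs proof, and it is not a consequence of semi-openness alone.

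The fix is to never leave the metric category: apply the semi-open Kuratowski-Ulam lemma (Lemma~\ref{5.3}) \emph{directly downstairs}. The first-coordinate projection $\lambda\colon\texttt{M}_\infty\rightarrow X$, $(x,x^\prime)\mapsto x$, is a semi-open surjection because $X$ is minimal and $\texttt{M}_\infty$ has a dense set of a.p.\ points (the semi-openness theorem quoted before Lemma~\ref{5.2}), and both $\texttt{M}_\infty$ and $X$ are Polish. Lemma~\ref{5.3} applied to $\lambda$ and the residual set $\texttt{Trans}(T,\texttt{M}_\infty)$ yields a residual $X_{\mathrm{LY}}\subseteq X$ with $S[x]=\{x^\prime\,|\,(x,x^\prime)\in\texttt{Trans}(T,\texttt{M}_\infty)\}$ residual in $\lambda^{-1}x=\texttt{M}_\infty[x]$ for every $x\in X_{\mathrm{LY}}$; a second application of Lemma~\ref{5.3} to the semi-open map $\pi\colon X\rightarrow Y$ and the residual set $X_{\mathrm{LY}}$ produces the residual $Y_{\mathrm{LY}}\subseteq Y$ with $\pi^{-1}y\cap X_{\mathrm{LY}}$ relatively residual in $\pi^{-1}y$. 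This is the paper's argument; the EGS tower (Theorem~\ref{3.7}) is not needed for this step at all.
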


Recall that a mapping is said to be \textit{semi-open} if the image under this mapping of every open nonempty set contains an open nonempty set.
If $\phi\colon\mathscr{X}\rightarrow\mathscr{Y}$ is an extension of compact flows with $Y$ minimal and $X$ having a dense set of a.p. points, then $\phi$ is semi-open (\cite[Lem.~3.12.15]{B79}, \cite[Thm.~1.15]{A88}).

To prove Theorem~\ref{5.1}, we shall need two auxiliary lemmas. The first is a special case of the mentioned semi-openness theorem.

\begin{lem}\label{5.2}
Let $\pi\colon\mathscr{X}\rightarrow\mathscr{Y}$ be an extension of compact flows with $Y$ minimal. If $(T,L)$ is an M-flow in $\texttt{R}_\pi$, then $p\colon L\rightarrow Y$ defined by $(x,x^\prime)\mapsto \pi x$ is semi-open onto.
\end{lem}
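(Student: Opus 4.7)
The plan is to reduce Lemma~\ref{5.2} to the semi-openness theorem recalled just before its statement: any $T$-equivariant continuous surjection from a compact flow with a dense set of a.p. points onto a minimal compact flow is semi-open. Since $(T,L)$ is an M-flow it contains a dense set of a.p. points by definition, and since $L$ is closed in the compact space $X\times X$ it is itself compact; the map $p(x,x')=\pi x$ is continuous and $T$-equivariant as the restriction of $\pi\circ\mathrm{pr}_1$ to the invariant closed set $L$. Thus only surjectivity is left to check, after which the classical theorem applies directly.

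For surjectivity, I would observe that $p(L)$ is a nonempty (as $L\neq\emptyset$), closed (continuous image of the compact set $L$), and $T$-invariant (since $p(t\tilde{x})=tp(\tilde{x})$) subset of the minimal flow $\mathscr{Y}$; so minimality of $\mathscr{Y}$ forces $p(L)=Y$.

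For readers wanting a self-contained argument, I would sketch the standard Baire-category proof adapted to the present setting. Given a nonempty open $U\subseteq L$, pick an open $U_1$ with $\overline{U_1}\subseteq U$ (using regularity of the compact Hausdorff space $L$) and, by density, an a.p. point $\tilde{x}\in U_1$. Its orbit closure $M=\overline{T\tilde{x}}$ is a compact minimal subflow of $(T,L)$, and by compactness of $M$ together with minimality there exists a finite set $F\subseteq T$ with $M\subseteq F(U_1\cap M)$. Applying $p$, and using that $p(M)$ is a closed $T$-invariant nonempty subset of $Y$ and therefore equals $Y$ by minimality, one gets
$$Y=\bigcup_{t\in F}t\cdot p(\overline{U_1}),$$
a finite union of closed sets (closedness of $p(\overline{U_1})$ uses compactness of $\overline{U_1}$). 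The Baire category theorem then forces $p(\overline{U_1})$ itself to have nonempty interior, and since $\overline{U_1}\subseteq U$ this interior is contained in $p(U)$, producing a nonempty open subset of $p(U)$.

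The only subtle point, and the reason the M-flow hypothesis is used rather than outright minimality of $L$, is that $L$ itself is generally not minimal: the dense set of a.p. points is what lets one extract a compact minimal $M$ meeting any given open set, and only on that minimal $M$ does the finite-cover argument apply before being pushed down to $Y$.
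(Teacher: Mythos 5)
Your proposal is correct and matches the paper, which simply observes that Lemma~\ref{5.2} is a special case of the quoted semi-openness theorem for extensions of compact flows with minimal base and a dense set of a.p.\ points; your verification of the hypotheses (compactness of $L$ as a closed subset of $X\times X$, density of a.p.\ points from the M-flow definition, and surjectivity of $p$ via minimality of $\mathscr{Y}$) is exactly the intended reduction. The supplementary Baire-category sketch is the standard proof of that cited theorem and is also sound.
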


\begin{lem}[{a topological `Fubini theorem'; cf.~\cite[Prop.~3.1]{V70}, \cite[Lem.~5.2]{G90} for other versions}]\label{5.3}
Let $\phi\colon W\rightarrow Z$ be a semi-open continuous onto map of Polish spaces. Suppose $K$ is a residual subset of $W$. Let
$Z_K=\{z\in Z\,|\,K\cap\phi^{-1}z\textrm{ is a residual subset of }\phi^{-1}z\}$.
Then $Z_K$ is residual in $Z$.
\end{lem}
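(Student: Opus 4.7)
The plan is to reduce to a single dense open set and then prove the nowhere-density of certain pieces through a Baire category argument in which semi-openness is used in an essential way. Since $W$ is Polish, the hypothesis that $K$ is residual lets us write $K\supseteq\bigcap_{n\ge1}G_n$ with each $G_n$ dense open in $W$. For any $z\in Z$ the fibre $\phi^{-1}(z)$ is closed in $W$, hence itself Polish, and $G_n\cap\phi^{-1}(z)$ is relatively open in it. So if we can establish that for every dense open $G\subseteq W$ the set
$$Z_G=\{z\in Z\,|\,G\cap\phi^{-1}(z)\textrm{ is dense in }\phi^{-1}(z)\}$$
is residual in $Z$, then $Z^\ast=\bigcap_n Z_{G_n}$ will be residual, and for $z\in Z^\ast$ the set $K\cap\phi^{-1}(z)\supseteq\bigcap_n(G_n\cap\phi^{-1}(z))$ contains a dense $G_\delta$ subset of the Polish space $\phi^{-1}(z)$, so it is residual there. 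Thus $Z^\ast\subseteq Z_K$ and $Z_K$ is residual.

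Fix therefore a dense open $G\subseteq W$ and a countable base $\{V_n\}_{n\ge1}$ of nonempty open subsets of $W$. Noting that $\phi^{-1}(z)\cap G$ fails to be dense in $\phi^{-1}(z)$ iff some $V_n$ meets $\phi^{-1}(z)$ yet $V_n\cap G\cap\phi^{-1}(z)=\emptyset$, we can rewrite the complement of $Z_G$ as the countable union
$$Z\setminus Z_G=\bigcup_{n\ge1}C_n,\qquad C_n=\phi(V_n)\setminus\phi(V_n\cap G).$$
It remains only to prove that each $C_n$ is nowhere dense in $Z$.

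This last step is the heart of the argument and is where the semi-openness of $\phi$ is essential. Suppose for contradiction that some nonempty open $V\subseteq Z$ lies in $\overline{C_n}$. Then $V\subseteq\overline{\phi(V_n)}$, and since $\phi(V_n)$ is dense in its own closure we obtain $V\cap\phi(V_n)\ne\emptyset$, whence $\phi^{-1}(V)\cap V_n$ is a nonempty open subset of $W$. Because $G$ is dense open, $\phi^{-1}(V)\cap V_n\cap G$ is also nonempty and open, and now semi-openness of $\phi$ produces a nonempty open $V'\subseteq Z$ with
$$V'\subseteq\phi\bigl(\phi^{-1}(V)\cap V_n\cap G\bigr)\subseteq V\cap\phi(V_n\cap G).$$
Being a nonempty open subset of $V\subseteq\overline{C_n}$, the set $V'$ must meet $C_n$; but every point of $V'$ lies in $\phi(V_n\cap G)$, while every point of $C_n$ lies outside $\phi(V_n\cap G)$ by definition of $C_n$. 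This contradiction shows $C_n$ is nowhere dense and completes the proof. The main obstacle is exactly this passage: because $\phi(V_n)$ is merely a set with nonempty interior rather than an open set, and $C_n$ need not sit inside any canonical image, one must combine the interior-producing strength of semi-openness with the density of $G$ to manufacture the small open set $V'$ inside $\phi(V_n\cap G)$ that forces the contradiction against $V\subseteq\overline{C_n}$.
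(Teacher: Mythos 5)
Your proof is correct. The skeleton coincides with the paper's: after translating your dense open sets $G=W\setminus F_i$ back into the closed nowhere dense sets $F_i$ covering $W\setminus K$, your sets $C_n=\phi(V_n)\setminus\phi(V_n\cap G)$ are exactly the paper's $C_{mi}=\{z\mid\emptyset\not=U_m\cap\phi^{-1}z\subseteq F_i\}$, and both proofs reduce the lemma to showing these countably many sets are nowhere dense. Where you genuinely diverge is in how that nowhere-density is established. The paper supposes $\textrm{int}_Z\textrm{cls}_ZC_{mi}\not=\emptyset$, forms the open set $F=U_m\cap\phi^{-1}D_{mi}$, introduces the set $W_o$ of points at which $\phi$ is open, proves $W_o$ is residual (using metric balls $B_{1/n}(x)$, hence the Polish hypothesis in an essential way), and then lifts sequences $z_n\to z$ with $z_n\in C_{mi}$ to sequences $w_n\to w$ in $W$ in order to conclude $F\subseteq F_i$. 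You instead apply semi-openness once, directly, to the nonempty open set $\phi^{-1}(V)\cap V_n\cap G$ to manufacture a nonempty open $V'\subseteq V\cap\phi(V_n\cap G)$, which immediately contradicts $V\subseteq\overline{C_n}$ since $C_n$ is disjoint from $\phi(V_n\cap G)$. Your version is shorter, avoids the auxiliary residuality of $W_o$ and the sequence-lifting entirely, and uses only second countability and the Baire-type bookkeeping, so it is arguably cleaner and slightly more general; the paper's route, by contrast, isolates the reusable fact that a semi-open map between Polish spaces is open at a residual set of points, which has independent interest.
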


\begin{proof}
Since $K$ is residual in $W$, there exists a sequence $F_1, F_2,\dotsc$ of closed nowhere dense sets in $W$ such that $W\setminus K\subseteq\bigcup_{i=1}^\infty F_i$. Let $K_z=K\cap\phi^{-1}z$ for all $z\in Z$, and we notice that
$$
\phi^{-1}z\setminus K_z\subseteq{\bigcup}_{i=1}^\infty(F_i\cap\phi^{-1}z).
$$
Now let
$$
B=\{z\in Z\,|\,\exists\,i\ge1\textrm{ s.t. }\textrm{int}_{\phi^{-1}z}(F_i\cap\phi^{-1}z)\not=\emptyset\}.
$$
So if $z\notin B$, then $K_z$ is of course residual in $\phi^{-1}z$. Then $Z\setminus Z_K\subseteq B$. It will therefore suffice to prove $B$ is of the first category.

Let $U_1, U_2, \dotsc$ be a basis for the topology on $W$. If $z\in B$, then we have for some integers $m$ and $i$ that $\emptyset\not=U_m\cap\phi^{-1}z\subseteq F_i$. Put
$$
C_{mi}=\{z\in B\,|\,\emptyset\not=U_m\cap\phi^{-1}z\subseteq F_i\}\quad \textrm{for }1\le m,i<\infty.
$$
Then
$B=\bigcup_{m,i=1}^\infty C_{mi}$
and $B$ is a first category set in $Z$ if each set $C_{mi}$ is nowhere dense in $Z$.

Let $D_{mi}=\textrm{int}_Z\textrm{cls}_ZC_{mi}$, and suppose $D_{mi}\not=\emptyset$ for some two positive integers $m,i$. Since $C_{mi}\cap D_{mi}\not=\emptyset$, and, $U_m\cap\phi^{-1}z\not=\emptyset$ whenever $z\in C_{mi}\cap D_{mi}$, hence the set
$$
F:=U_m\cap \phi^{-1}D_{mi}
$$
is an open nonempty subset of $U_m$. We shall prove $F\subseteq F_i$, contrary to that $F_i$ is nowhere dense, then Lemma~\ref{5.3} will follow.

Let $W_o=\{w\in W\,|\,\phi\textrm{ is open at }w\}$. Since $\phi$ is semi-open and $W$ is a Polish space, $W_o$ is a residual subset of $W$. Indeed, let
$G_n=\{x\,|\,x\in W\textrm{ s.t. }x\in B_{1/n}(x)\cap\phi^{-1}\textrm{int}_W\phi B_{1/n}(x)\}$ for $n=1,2,\dotsc$; then $G_n$ is open dense in $W$, and, $W_o=\bigcap_nG_n$.

Now put $F_o=F\cap W_o$. Then $F_o$ is dense in $F$. If $w\in F_o$ and $z=\phi w\in C_{mi}$, then by definition $w\in U_m\cap\phi^{-1}z\subseteq F_i$. Now let $w\in F_o$ but $z=\phi w\notin C_{mi}$. Then at least $z\in\textrm{cls}_ZC_{mi}$, or, $z=\lim_{n\to\infty}z_n$, $z_n\in C_{mi}$. So there exists a sequence $w_n\to w$ in $F$ such that $\phi w_n=z_n$ for all $n$. Eventually, $w_n\in U_m$, and since $z_n\in C_{mi}$, we have $w_n\in F_i$ for all $n$. Therefore $w\in \overline{F}_i=F_i$ and $F_o\subseteq F_i$. Since $F_i$ is closed and $F_o$ is dense in $F$, so $F\subseteq F_i$. The proof is complete.
\end{proof}

Notice here that comparing with Veech~\cite[Prop.~3.1]{V70} and Glasner~\cite[Lem.~5.2]{G90}, the point of Lemma~\ref{5.3} is that $W$ need not be a \textbf{minimal} flow. Moreover, as a result of Lemma~\ref{5.3}, we can conclude the following useful result:

\begin{cor*}[{\cite[Lem.~3.2]{HY}}]
Let $E$ be a Polish space and $R$ a relation on $E$ such that $R$ contains a residual subset of $E\times E$. Then there exists a residual subset $A$ of $E$ such that for all $x\in A$ there exists a residual subset $K_x$ of $E$ with $\{x\}\times K_x\subset R$.
\end{cor*}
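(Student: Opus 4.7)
The plan is to apply Lemma~\ref{5.3} directly, taking the ``fibration'' to be the first-coordinate projection on $E\times E$. Concretely, I would set $W=E\times E$, $Z=E$, and $\phi=\mathrm{pr}_1\colon E\times E\to E$, $(x,y)\mapsto x$. Since $E$ is Polish, so is $E\times E$; the map $\phi$ is continuous, surjective, and open, so in particular semi-open. Thus all hypotheses of Lemma~\ref{5.3} are satisfied.

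By assumption $R$ contains a residual subset $K$ of $E\times E$; applying Lemma~\ref{5.3} with this $K$, the set
\[
Z_K=\{x\in E\,|\,K\cap\phi^{-1}x\textrm{ is residual in }\phi^{-1}x\}
\]
is residual in $E$. I would then take $A:=Z_K$. For each $x\in A$, use the canonical homeomorphism $\phi^{-1}x=\{x\}\times E\cong E$, under which $K\cap(\{x\}\times E)$ corresponds to some residual subset $K_x\subseteq E$; by construction $\{x\}\times K_x\subseteq K\subseteq R$, which is exactly the desired conclusion.

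There is essentially no obstacle here: the entire content is packaged in Lemma~\ref{5.3}. The only two small things to verify carefully are (i) that $\mathrm{pr}_1$ is open (take a basic open box $U\times V$; its image is $U$, which is open, so openness and hence semi-openness are immediate), and (ii) that Polishness of $E$ gives Polishness of $E\times E$, which is standard. Everything else is a literal specialization of Lemma~\ref{5.3} to the trivial fibration $E\times E\to E$.
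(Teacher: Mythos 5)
Your proof is correct and is exactly the argument the paper intends: the corollary is stated as an immediate consequence of Lemma~\ref{5.3}, obtained by specializing it to the open (hence semi-open) first-coordinate projection $E\times E\to E$ of Polish spaces, with $A=Z_K$ and $K_x$ the slice of the residual set $K\subseteq R$ over $x$. Nothing further is needed.
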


\begin{proof}[\textbf{Proof of Theorem~\ref{5.1}}]
Since $\mathscr{M}_\infty:=(T,\texttt{M}_\infty)$ is T.T. by Theorem~\ref{3.5} and $\texttt{M}_\infty$ is a compact metric space, $\texttt{Trans}(\mathscr{M}_\infty)$ is residual in $\texttt{M}_\infty$. By Lemma~\ref{4.1}b, each point of $\texttt{Trans}(\mathscr{M}_\infty)$ is a Li-Yorke pair for $\pi$ rel. $\texttt{M}_\infty$.
Let $\lambda\colon\mathscr{M}_\infty\rightarrow\mathscr{X}$ be defined by $(x,x^\prime)\mapsto x$. Clearly $\lambda$ is an extension of flows.
Since $\lambda\colon \texttt{M}_\infty\rightarrow X$ is semi-open onto by Lemma~\ref{5.2}, there exists by Lemma~\ref{5.3} a residual set $X_{\textrm{LY}}$ in $X$ such that for every $x\in X_{\textrm{LY}}$, the set $S[x]=\{x^\prime\in X\,|\,(x,x^\prime)\in \texttt{Trans}(\mathscr{M}_\infty)\}$ is relatively residual in $\lambda^{-1}x=\texttt{M}_\infty[x]$.
Furthermore, since $\pi\colon X\rightarrow Y$ is semi-open onto, by Lemma~\ref{5.3} there exists a residual set $Y_{\textrm{LY}}$ in $Y$ such that for every $y\in Y_{\textrm{LY}}$, the set $\pi^{-1}y\cap X_{\textrm{LY}}$ is relatively residual in $\pi^{-1}y$. The proof is complete.
\end{proof}

\begin{lem}[{cf.~\cite[Lem.~3.1]{HY} for $E$ having no isolated point $\&$ $R\subset E\times E$}]\label{5.4}
Let $E$ be a Polish space.
Let $R\subset E\times E\setminus\Delta_E$ be a symmetric relation on $E$ with the property that there exists a residual subset $A$ of $E$ such that for each $a\in A$, $R[a]$ contains a residual subset of $E$. Then there is a dense uncountable subset $B$ of $E$ such that $B\subseteq A$ and $B\times B\setminus\Delta_E\subseteq R$.
\end{lem}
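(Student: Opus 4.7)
The plan is to build $B$ directly by transfinite recursion of length $\omega_1$, rather than via a Cantor-scheme construction. Since $E$ is Polish, it is second countable and Baire; I would fix a countable base $\{W_n\}_{n\ge1}$ of nonempty open sets and, using this base, enforce density by insisting that the first $\omega$ points of the transfinite sequence meet every basic open.

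The recursion: suppose $\{b_\beta:\beta<\alpha\}\subseteq A$ have been chosen with $(b_\gamma,b_\beta)\in R$ for all $\gamma<\beta<\alpha$. Let
\[
S_\alpha:=A\cap\bigcap_{\beta<\alpha}R[b_\beta].
\]
Because $\alpha<\omega_1$ is countable and each $R[b_\beta]$ contains a residual subset of $E$ (since $b_\beta\in A$, by hypothesis), and because $A$ is itself residual, $S_\alpha$ is a countable intersection of sets each of which contains a residual subset, hence $S_\alpha$ itself contains a residual subset of the Baire space $E$. In particular $S_\alpha$ is dense in $E$, so the intersection $S_\alpha\cap W_n$ is nonempty for each $n$. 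At stage $\alpha=n<\omega$, I pick $b_n\in S_n\cap W_n$; at stages $\omega\le\alpha<\omega_1$, I pick $b_\alpha\in S_\alpha$ arbitrarily. The choice is always possible, and the constraints $R\subseteq E\times E\setminus\Delta_E$ together with $b_\alpha\in R[b_\beta]$ force $b_\alpha\neq b_\beta$, so all $b_\alpha$ are distinct.

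Set $B=\{b_\alpha:\alpha<\omega_1\}$. Then $|B|=\aleph_1$ is uncountable, $B\subseteq A$ by construction, and $B$ is dense in $E$ since $b_n\in W_n$ for every basic open $W_n$. For any two distinct points $b_\beta,b_\alpha\in B$ with $\beta<\alpha$, we have $b_\alpha\in R[b_\beta]$ so $(b_\beta,b_\alpha)\in R$, and by symmetry of $R$ also $(b_\alpha,b_\beta)\in R$. Hence $B\times B\setminus\Delta_E\subseteq R$, as required.

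The main obstacle I am sidestepping is that $R$ is described only through its sections, with no assumption of Borel or Baire-property structure on $R$ itself, so one cannot invoke Kuratowski--Ulam to deduce that $E\times E\setminus R$ is meager in $E\times E$ and then run a standard Mycielski perfect-set construction. A Cantor scheme $\{V_s\}_{s\in2^{<\omega}}$ does produce $2^{\aleph_0}$ branch points $x_\sigma=\bigcap_n V_{\sigma\mid n}$, but arranging $(x_\sigma,x_\tau)\in R$ for distinct branches would demand that $R$ be stable under limits (e.g.\ closed or $G_\delta$), which fails in our setting. The transfinite recursion avoids this entirely: one never passes to limits of pairs in $R$; one only adds one point at a time to a residual target, and iterates through the countable ordinals to reach uncountable cardinality.
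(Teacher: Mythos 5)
Your proof is correct, and it runs on the same engine as the paper's: at every countable stage the set $A\cap\bigcap_{\beta<\alpha}R[b_\beta]$ is comeager (a countable intersection of sets each containing a residual set), hence dense in the Baire space $E$, so one more point of $A$ can always be added while keeping all pairs in $R$, and irreflexivity of $R$ keeps the points distinct. The difference is purely in the transfinite bookkeeping. You run an explicit recursion of length $\omega_1$, hitting a countable base during the first $\omega$ stages to secure density, and obtain a set of cardinality exactly $\aleph_1$. The paper instead considers the family $\mathfrak{B}$ of dense subsets $B\subseteq A$ with $B\times B\setminus\Delta_E\subseteq R$, shows $\mathfrak{B}\neq\emptyset$ by the very same one-point extension argument, takes a maximal element by Zorn's lemma, and notes that a countable maximal element could still be extended, so the maximal one is uncountable. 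Your route is marginally more explicit and avoids checking that $\mathfrak{B}$ is closed under unions of chains; the paper's delegates the ordinal bookkeeping to Zorn. Both use choice, and neither needs any Borel or Baire-property hypothesis on $R$ itself, which is exactly the point you make in declining the Kuratowski--Ulam/Mycielski approach.
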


\begin{proof}
Let $\mathfrak{B}$ be the family of dense subsets $B$ of $E$ such that $B\times B\setminus\Delta_E\subseteq R$ and $B\subseteq A$. Clearly, $\mathfrak{B}\not=\emptyset$. Indeed, if $B\subset A$ is at most countable with $B\times B\setminus\Delta\subseteq R$ and $\overline{B}\not=E$, then there exists an element $x\in A\cap(E\setminus\overline{B})\cap\left(\bigcap_{b\in B}R[b]\right)$. So $B_1=B\cup\{x\}\subset A$ such that $B_1\times B_1\setminus\Delta_E\subseteq R$. Then by induction we can find a subset of $A$ that belongs to $\mathfrak{B}$.

Now by Zorn's lemma there exists a maximal element $B$ in $(\mathfrak{B},\subseteq)$. Then $B$ is uncountable. Otherwise, we can choose an element
$x\in A\cap\left(\bigcap_{b\in B}R[b]\right)$ such that $B\cup\{x\}\in\mathfrak{B}$, contrary to that $B$ is a maximal element in $\mathfrak{B}$. This proves Lemma~\ref{5.4}.
\end{proof}

Theorem~\ref{5.1} is an extension of \cite[Prop.~3.21]{DT}. As a matter of fact, using $\texttt{R}_\pi$ instead of $\texttt{M}_\infty$ in the above arguments, we can obtain the following:

\begin{thm}\label{5.5}
Let $\pi\colon\mathscr{X}\rightarrow\mathscr{Y}$ be a nontrivial extension of metric flows with $Y$ minimal. Suppose that $\texttt{R}_\pi$ has a dense set of a.p. points and that $\pi$ is weakly mixing. Then there exists a residual set $Y_{\textrm{LY}}$ in $Y$ such that for every $y\in Y_{\textrm{LY}}$, $\pi^{-1}y$ has no isolated points and it contains a relatively dense uncountable Li-Yorke scrambled set for $\pi$ rel. $\texttt{R}_\pi$.
\end{thm}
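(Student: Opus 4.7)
The strategy parallels the proof of Theorem~\ref{5.1}, with $\texttt{R}_\pi$ replacing $\texttt{M}_\infty$ throughout, and with the topological `Fubini' disintegration of Lemma~\ref{5.3} run along the projection to $Y$ rather than along the one to $X$ (this avoids having to verify semi-openness of the first-coordinate projection $\texttt{R}_\pi\to X$, for which $X$ need not be minimal).

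Since $\pi$ is weakly mixing, $(T,\texttt{R}_\pi)$ is T.T., and combined with the hypothesis that $\texttt{R}_\pi$ has a dense set of a.p. points this makes $(T,\texttt{R}_\pi)$ an M-flow. As $X$ is metric, $\texttt{R}_\pi$ is Polish, hence the transitive-point set $\texttt{Trans}(\texttt{R}_\pi)=\{z\in\texttt{R}_\pi\,|\,\overline{Tz}=\texttt{R}_\pi\}$ is a dense $G_\delta$ in $\texttt{R}_\pi$. Nontriviality of $\pi$ gives $\Delta_X\varsubsetneq\texttt{R}_\pi$, so Lemma~\ref{4.1}b identifies every point of $\texttt{Trans}(\texttt{R}_\pi)$ as a Li-Yorke pair for $\pi$ rel.~$\texttt{R}_\pi$; in particular $\texttt{Trans}(\texttt{R}_\pi)\cap\Delta_X=\emptyset$, and $\texttt{Trans}(\texttt{R}_\pi)$ is symmetric because the flip $(x,x^\prime)\mapsto(x^\prime,x)$ commutes with the $T$-action.

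Consider next the map $p\colon\texttt{R}_\pi\to Y$, $(x,x^\prime)\mapsto\pi x$. By Lemma~\ref{5.2} (with $L=\texttt{R}_\pi$), $p$ is a semi-open continuous surjection of Polish spaces, whose fibre over $y$ is $p^{-1}y=\pi^{-1}y\times\pi^{-1}y$. Applying Lemma~\ref{5.3} to $p$ with $K=\texttt{Trans}(\texttt{R}_\pi)$ yields a residual set $Y_{\textrm{LY}}\subseteq Y$ such that, for every $y\in Y_{\textrm{LY}}$, $\texttt{Trans}(\texttt{R}_\pi)\cap(\pi^{-1}y\times\pi^{-1}y)$ is residual in $\pi^{-1}y\times\pi^{-1}y$. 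Fix such a $y$, put $E=\pi^{-1}y$ and $R=(\texttt{Trans}(\texttt{R}_\pi)\setminus\Delta_X)\cap(E\times E)$ (symmetric, off-diagonal, residual in $E\times E$); the Corollary to Lemma~\ref{5.3} produces a residual set $A_y\subseteq E$ such that for each $x\in A_y$ the cross-section $S[x]:=R[x]$ is residual in $E$. If $x_0\in E$ were isolated, the open singleton $\{x_0\}$ would meet every dense subset of the Polish space $E$, forcing $x_0\in A_y$ and then $x_0\in S[x_0]$; but $(x_0,x_0)\in\Delta_X$ cannot be transitive in $\texttt{R}_\pi$, for otherwise $\overline{T(x_0,x_0)}\subseteq\Delta_X$ would give $\texttt{R}_\pi=\Delta_X$, contradicting nontriviality. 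Hence $E$ is perfect, and Lemma~\ref{5.4} applied to $(E,A_y,R)$ delivers an uncountable dense set $B\subseteq A_y$ with $B\times B\setminus\Delta_X\subseteq R$, which is the required Li-Yorke scrambled set for $\pi$ rel.~$\texttt{R}_\pi$.

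The only non-routine step is the semi-openness of $p\colon\texttt{R}_\pi\to Y$: Lemma~\ref{5.2} is stated for compact flows, whereas Theorem~\ref{5.5} is phrased for metric flows, so one must check that the standard Auslander-Bronstein semi-openness argument (minimal target plus dense a.p. points in the source) survives in the possibly noncompact Polish setting. This is the single technical point on which I expect to have to spend real care; everything else is a direct transcription of the template of Theorem~\ref{5.1}.
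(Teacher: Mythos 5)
Your proof is correct and follows essentially the same route as the paper's: both identify $\texttt{Trans}(\mathscr{R}_\pi)$ as a residual, symmetric, off-diagonal set of Li-Yorke pairs, push it down through $\texttt{R}_\pi\to Y$ and then fibrewise through $\pi^{-1}y\times\pi^{-1}y\to\pi^{-1}y$ via Lemma~\ref{5.3}, rule out isolated points by the same ``a transitive point on the diagonal forces $\texttt{R}_\pi=\Delta_X$'' contradiction, and conclude with Lemma~\ref{5.4}. The semi-openness issue you flag as the one delicate point does not actually surface in the paper, which implicitly reads the hypotheses in the compact metric setting (as in Lemma~\ref{5.2} and the template of Theorem~\ref{5.1}) and simply transcribes that argument.
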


\begin{proof}
Notice that $\mathscr{R}_\pi$ is an M-flow with $\texttt{R}_\pi\not=\Delta_X$. Let $S=\texttt{Trans}(\mathscr{R}_\pi)$. Then $S$ is a symmetric relation on $X$ such that $S\cap\Delta_X=\emptyset$ and it is residual in $\texttt{R}_\pi$.
Using Lemma~\ref{5.3} for the mappings $\texttt{R}_\pi\xrightarrow{(x,x^\prime)\mapsto\pi x} Y$ and $\pi^{-1}y\times\pi^{-1}y\xrightarrow{(x,x^\prime)\mapsto x}\pi^{-1}y$, it follows that there exists a residual set $Y_{\textrm{LY}}$ in $Y$ such that for all $y\in Y_{\textrm{LY}}$, $\pi^{-1}y$ contains a relatively residual set $(\pi^{-1}y)_\textrm{LY}$ with the property that for $x\in(\pi^{-1}y)_\textrm{LY}$ the cell $S[x]$ is relatively residual in $\pi^{-1}y$.

Given $y\in Y_\textrm{LY}$, put $E=\pi^{-1}y$ and $R=S\cap E\times E$. Then $R$ is a symmetric relation on $E$ with $R\cap\Delta_E=\emptyset$. If $E$ has an (relative) isolated point $x_0$, then $x_0\in(\pi^{-1}y)_\textrm{LY}$ and $x_0\in S[x_0]$ so that $(x_0,x_0)$ is a transitive point for $\mathscr{R}_\pi$. This is impossible for $\texttt{R}_\pi\not=\Delta_X$. This shows that $E$ has no isolated points. Let $A=(\pi^{-1}y)_\textrm{LY}$. Then by Lemma~\ref{5.4}, there exists a dense uncountable subset $B$ of $E$ such that $(x,x^\prime)\in R$ whenever $x,x^\prime\in B$ with $x\not=x^\prime$.

Clearly, we have for all $y\in Y_\textrm{LY}$ that $B\subseteq\pi^{-1}y$ is a Li-Yorke scrambled set for $\pi$ rel. $\texttt{R}_\pi$. The proof is complete.
\end{proof}

Therefore if $\pi\colon\mathscr{X}\rightarrow\mathscr{Y}$ is a nontrivial Bronstein extension of minimal metric flows and if $\pi$ has no nontrivial equicontinuous factors, then $\pi$ is Li-Yorke chaotic in the sense of Theorem~\ref{5.5} (i.e., there is a residual set $Y_{\textrm{LY}}$ in $Y$).

By Theorem~3.7 and Theorem~\ref{5.5}, it follows that if $Y_\textrm{LY}$ is defined by Theorem~\ref{5.1}, then there exists, for all $y\in Y_{\textrm{LY}}$, a uncountable Li-Yorke scrambled set in $\pi^{-1}y$ for $\pi$ rel. $\texttt{M}_\infty$.
When $Y$ is a singleton set, the Bronstein condition on $\pi$ in Theorem~\ref{5}.5 may be removed as follows:

\begin{prop}[{cf.~\cite[Prop.~3.11]{DT}}]\label{5.6}
Let $\mathscr{X}$ be a weak-mixing, nontrivial, metric dynamic. Then $\mathscr{X}$ has a dense uncountable Li-Yorke scrambled set (for $\pi\colon X\rightarrow\{*\}$) rel. $X\times X$.
\end{prop}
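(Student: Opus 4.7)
The plan is to unfold the Li--Yorke definitions until the statement reduces to an abstract Baire-category fact about residual symmetric relations, and then invoke Lemmas~\ref{5.3}, its corollary, and Lemma~\ref{5.4} exactly as in the proofs of Theorems~\ref{5.1} and~\ref{5.5}. First I would observe that for $\pi\colon\mathscr{X}\to\{*\}$ we have $\texttt{R}_\pi=X\times X$, so Def.~\ref{4.1}a specializes to: a Li-Yorke pair for $\pi$ rel.~$X\times X$ is a pair $(x,x^\prime)\in X\times X$ with $\overline{T(x,x^\prime)}=X\times X$, i.e., a transitive point of the product flow $\mathscr{X}\times\mathscr{X}$. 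Thus a Li-Yorke scrambled set for $\pi$ rel.~$X\times X$ is a subset $B\subseteq X$ every off-diagonal pair of which is a transitive point of $\mathscr{X}\times\mathscr{X}$.

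Next, since $\mathscr{X}$ is weakly mixing by hypothesis, $\mathscr{X}\times\mathscr{X}$ is T.T., and since $X$ is a Polish space so is $X\times X$. A standard Baire-category argument (intersecting the countably many dense open sets $\bigcup_{t\in T}t^{-1}U_n$, for a countable base $(U_n)$ of $X\times X$) then shows that
$$R:=\texttt{Trans}(\mathscr{X}\times\mathscr{X})$$
is residual in $X\times X$. The flip $(x,x^\prime)\mapsto(x^\prime,x)$ is an automorphism of $\mathscr{X}\times\mathscr{X}$ and hence preserves transitive points, so $R$ is a symmetric relation on $X$. Also, nontriviality of $\mathscr{X}$ forces $R\cap\Delta_X=\emptyset$, because the $T$-orbit of any point $(x,x)\in\Delta_X$ is contained in the proper closed set $\Delta_X$, so $(x,x)$ cannot be a transitive point of $\mathscr{X}\times\mathscr{X}$.

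With $R\subseteq X\times X\setminus\Delta_X$ symmetric and residual in $X\times X$, the Corollary following Lemma~\ref{5.3} yields a residual set $A\subseteq X$ such that $R[x]$ is residual in $X$ for every $x\in A$. Lemma~\ref{5.4} then delivers a dense uncountable subset $B\subseteq A$ with $B\times B\setminus\Delta_X\subseteq R$. By the reduction in the first paragraph, $B$ is the desired dense uncountable Li-Yorke scrambled set for $\pi$ rel.~$X\times X$. The main work is really in Lemmas~\ref{5.3} and~\ref{5.4}, so no substantial obstacle remains; the one checkpoint worth being careful about is the Polish-space hypothesis ensuring that both the residuality of $R$ and the corollary of Lemma~\ref{5.3} apply, which is automatic under the ambient ``metric dynamic'' convention of this paper.
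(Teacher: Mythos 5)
Your proposal is correct and follows essentially the same route as the paper: both take $R=\texttt{Trans}(\mathscr{X}\times\mathscr{X})$, note it is residual and off-diagonal by weak mixing and nontriviality, and then apply Lemma~\ref{5.3} (via its corollary for the projection $(x,x^\prime)\mapsto x$) together with Lemma~\ref{5.4} to extract the dense uncountable scrambled set. Your explicit justification that $R\cap\Delta_X=\emptyset$ (the orbit of a diagonal point stays in the proper closed set $\Delta_X$) is in fact cleaner than the paper's phrasing of that step.
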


\begin{proof}
Since $X\times X$ is a compact metric space, $R:=\texttt{Trans}(\mathscr{X}\times \mathscr{X})$ is residual in $X\times X$ so that $R\cap\Delta_X=\emptyset$. Moreover, as $X\times X\rightarrow X$ defined by $(x,x^\prime)\mapsto x$ is an open onto map, it follows by Lemmas~\ref{5.3} and \ref{5.4} that there exists a dense uncountable set $B$ in $X$ such that if $x,x^\prime\in B$ and $x\not=x^\prime$, then $\overline{T(x,x^\prime)}=X\times X$. Since $X\not=\{*\}$, $\Delta_X\not=X\times X$ and $B$ is a Li-Yorke scrambled set for $\pi\colon X\rightarrow\{*\}$ rel. $X\times X$. The proof is complete.
\end{proof}

\begin{prop}\label{5.7}
Let $\pi\colon\mathscr{X}\rightarrow\mathscr{Y}$ be a nontrivial extension of minimal compact metric flows. Suppose $\pi$ is Ellis weakly mixing and non-proximal. Then there exists a residual set $Y_{\textrm{LY}}$ in $Y$ such that for every $y\in Y_{\textrm{LY}}$, $\pi^{-1}y$ contains a uncountable Li-Yorke scrambled set for $\pi$ rel. $\texttt{M}_\infty$.
\end{prop}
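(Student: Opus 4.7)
The plan is to reduce Proposition~\ref{5.7} to Theorem~\ref{5.5} by showing that, under the Ellis weak-mixing and non-proximal hypotheses, the canonical M-dynamic $\texttt{M}_\infty$ coincides with the entire relative product $\texttt{R}_\pi$. Throughout, I work inside the Ellis--Glasner--Shapiro tower of \ref{3}.6 and use the identification $\texttt{M}_\infty=(\phi\times\phi)\texttt{R}_{\pi_\infty}$ supplied by Theorem~\ref{3.7}; both are available, because Lemma~\ref{3.12} tells us that Ellis weak-mixing together with non-proximality forces $\pi$ to be non-PI.

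The first and main step is to prove $\texttt{M}_\infty=\texttt{R}_\pi$. From \ref{3}.6 one has $\mathfrak{g}(\mathscr{Y}_\infty)=\mathbb{A}\mathbb{F}_\infty$, while $\mathfrak{g}(\mathscr{Y})=\mathbb{F}$ by \ref{3.1}. The Ellis weak-mixing condition $\mathbb{F}=\mathbb{A}\mathbb{F}^\prime$, iterated using the identity $\mathbb{A}(\mathbb{A}\mathbb{F}^\prime)^\prime=\mathbb{A}\mathbb{F}^{\prime\prime}$ exactly as in the proof of Lemma~\ref{3.12}, yields $\mathbb{F}=\mathbb{A}\mathbb{F}_\infty$; hence $\mathfrak{g}(\mathscr{Y})=\mathfrak{g}(\mathscr{Y}_\infty)$, so $\psi\colon\mathscr{Y}_\infty\to\mathscr{Y}$ is an isomorphism. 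Given any $(x,x^\prime)\in\texttt{R}_\pi$, choose lifts $w,w^\prime\in X_\infty$ with $\phi w=x$ and $\phi w^\prime=x^\prime$; the relation $\pi\circ\phi=\psi\circ\pi_\infty$ forces $\psi\pi_\infty w=\psi\pi_\infty w^\prime$, and the injectivity of $\psi$ then gives $\pi_\infty w=\pi_\infty w^\prime$, i.e., $(w,w^\prime)\in\texttt{R}_{\pi_\infty}$. Therefore $(x,x^\prime)=(\phi\times\phi)(w,w^\prime)\in(\phi\times\phi)\texttt{R}_{\pi_\infty}=\texttt{M}_\infty$, so $\texttt{R}_\pi\subseteq\texttt{M}_\infty$; the reverse inclusion is immediate from $\texttt{M}_\infty\subseteq\texttt{R}_\pi$.

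Once $\texttt{M}_\infty=\texttt{R}_\pi$ is established, Theorem~\ref{3.5} (which applies because $\pi$ is not PI) makes $\mathscr{R}_\pi=\mathscr{M}_\infty$ an M-dynamic, so $\texttt{R}_\pi$ has a dense set of a.p.\ points and $\pi$ is weakly mixing. These are exactly the hypotheses of Theorem~\ref{5.5}, which then produces a residual $Y_{\textrm{LY}}\subseteq Y$ such that for every $y\in Y_{\textrm{LY}}$ the fiber $\pi^{-1}y$ has no isolated point and contains a relatively dense uncountable Li-Yorke scrambled set for $\pi$ rel.\ $\texttt{R}_\pi=\texttt{M}_\infty$, as required. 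The entire proof thus hinges on the identification $\texttt{M}_\infty=\texttt{R}_\pi$, and I expect this to be the only real obstacle; once settled via the Ellis-group computation and the lifting argument sketched above, the conclusion is just a citation of Theorem~\ref{5.5}.
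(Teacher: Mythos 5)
Your reduction hinges entirely on the claimed identity $\texttt{M}_\infty=\texttt{R}_\pi$, and the step that is supposed to deliver it is invalid. From the Ellis weak-mixing hypothesis you correctly derive $\mathbb{F}=\mathbb{A}\mathbb{F}_\infty$, hence $\mathfrak{g}(\mathscr{Y}_\infty)=\mathfrak{g}(\mathscr{Y})$; but equality of Ellis groups for an extension of minimal flows only tells you that the extension is \emph{proximal} (this is exactly the statement ``$\mathbb{A}=\mathbb{F}$ iff $\pi$ is proximal'' recorded after Def.~\ref{3.1}), not that it is injective. A proximal extension of minimal flows can be very far from one-to-one, so your lifting argument breaks down at the point where you pass from $\psi\pi_\infty w=\psi\pi_\infty w^\prime$ to $\pi_\infty w=\pi_\infty w^\prime$. (The implication ``proximal $\Rightarrow$ 1-1'' is available for $\pi_\infty$ only because $\pi_\infty$ is RIC; $\psi$ is merely strictly PI, and a proximal extension is itself trivially strictly PI without being an isomorphism.) Moreover the identity you are after is false in general: the paper warns in \ref{3.6} that $(\phi\times\phi)\texttt{R}_{\pi_\infty}\varsubsetneq\texttt{R}_\pi$ in general, and in the remark immediately following Proposition~\ref{5.7} that $\texttt{M}_\infty\varsubsetneq X\times X$ even for weakly mixing minimal flows. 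So the obstacle you flagged as ``the only real one'' is genuine and your argument does not remove it.

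The paper's own route avoids this entirely: Lemma~\ref{3.12} gives that $\pi$ is not PI, and then one runs the argument of Theorem~\ref{5.5} not on $\texttt{R}_\pi$ but on $\texttt{M}_\infty$ itself --- using Theorem~\ref{3.7} to write $\texttt{M}_\infty=(\phi\times\phi)\texttt{R}_{\pi_\infty}$ with $\pi_\infty$ RIC weakly mixing (so that $\texttt{R}_{\pi_\infty}$ genuinely has a dense set of a.p.\ points and its transitive points form a residual set), combined with the residual sets produced in Theorem~\ref{5.1} and the Mycielski-type Lemma~\ref{5.4}; this is what the paragraph preceding Proposition~\ref{5.6} is recording. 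If you want to salvage your write-up, replace the false identification $\texttt{M}_\infty=\texttt{R}_\pi$ by an application of Theorem~\ref{5.5} to the RIC weakly mixing extension $\pi_\infty$ followed by a push-down along the proximal map $\phi$, checking that images of scrambled sets remain uncountable and scrambled rel.\ $\texttt{M}_\infty$.
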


\begin{proof}
This follows easily from Theorem~\ref{5.5} and Lemma~\ref{3.12}.
\end{proof}

Notice that if $\mathscr{X}$ is an Ellis weak-mixing minimal flow, then it is weakly mixing (cf.~\cite{G05,A04}). However, $\texttt{M}_\infty\varsubsetneq X\times X$ in general even if $\mathscr{X}$ is weakly mixing.

\section{M-flows need not be point-transitive and examples}\label{6}

If $\mathscr{X}$ is an M-dynamic, then it is syndetically transitive; that is, $N_T(U,V)$ is a syndetic set in $T$ for all nonempty open sets $U,V$ in $X$. It is well known that if $X$ is not separable, then a T.T. flow need not be point-transitive (cf.~\cite[Exa.~4.17]{EE}). As a matter of fact, an M-flow need not be point-transitive too.

\begin{exa}\label{6.1}
Let $X=Y^T$ be the space of all functions $f\colon T\rightarrow Y$, continuous or not, equipped with the pointwise convergence topology, where $Y$ is a compact Hausdorff space and $T$ is an infinite discrete group with identity $e$. Then $X$ is a compact Hausdorff space.
Given $t\in T$ and $f\in X$, define
$f^t\colon T\rightarrow Y$ by $\tau\mapsto f(\tau t)$.
We now define the right-translation flow $\mathscr{X}$ on $X$ with the phase group $T$ as follows:
$T\times X\rightarrow X$, $(t,f)\mapsto tf:=f^t$.
Then:
\begin{enumerate}[(1)]
\item {\it $\mathscr{X}$ is syndetically and thickly transitive.}
\end{enumerate}

\begin{proof}
For all nonempty open subsets $U,V$ of $Y$ and $\tau_1,\dotsc,\tau_n, s_1,\dotsc,s_n$ in $T$, let
\begin{gather*}
\mathcal{U}=\left[\{\tau_1,\dotsc,\tau_n\},U\right]=\left\{f\in X\,|\,f(\tau_i)\in U, i=1,\dotsc,n\right\}\\ \mathcal{V}=\left[\{s_1,\dotsc,s_n\},V\right].
\end{gather*}
Let
\begin{equation*}
T_0=\left\{s_i^{-1}\tau_j\,|\,i=1,\dotsc,n;\ j=1,\dotsc,n\}\cup\{s_i^{-1}s_j\,|\,i=1,\dotsc,n;\  j=1,\dotsc,n\right\}
\end{equation*}
and
\begin{equation*}
T_1=T\setminus T_0.
\end{equation*}
Since $T$ is an infinite discrete group and $T_0$ is finite, it is easy to check that $T_1$ is syndetic and thick in $T$.
Indeed, if $K=\{e\}\cup t_0T_0^{-1}$ for some $t_0\in T_1$, then $Kt\cap T_1\not=\emptyset$ for all $t\in T=T_0\cup T_1$ so $T_1$ is syndetic in $T$.
If $F$ is any finite subset of $T$, then there exists some $t\in T$ such that $Ft\subseteq T_1$; otherwise, there exists a $k\in F$ and $t\not=s$ in $T$ with $kt, ks\in T_0$ and $kt=ks$ so that $t=s$.

Thus for all $\bt\in T_1$, we have $\{s_1\bt,\dotsc,s_n\bt\}\cap\{\tau_1,\dotsc,\tau_n, s_1,\dotsc,s_n\}=\emptyset$.
Now choose $f\in X$ such that $f(\tau_i)\in U$ and $f(s_i\bt)\in V$ for $1\le i\le n$. Thus $f\in\mathcal{U}$ and $f^{\bt}\in\mathcal{V}$ so that $N_T(\mathcal{U},\mathcal{V})\supseteq T_1$. This proves our assertion.
\end{proof}
\noindent
Since every syndetic subset of $T$ intersects non-voidly every thick subset of $T$, thus we have in fact concluded the following fact:
\begin{enumerate}[(2)]
\item {\it $\mathscr{X}$ is weakly mixing.}
\end{enumerate}

\begin{enumerate}[(3)]
\item {\it Let $Y$ be a non-separable space and $T$ a countable discrete group. Then $\mathscr{X}$ is not point-transitive, namely, $\texttt{Trans}\,(\mathscr{X})=\emptyset$; and, $\texttt{Equi}\,(\mathscr{X})=\emptyset$.}
\end{enumerate}

\begin{proof}
First $X$ has no countable dense subset.
Because $X$ is not separable and $T$ is countable, $\mathscr{X}$ is not point-transitive. Since $\mathscr{X}$ is T.T., $\texttt{Equi}(\mathscr{X})\subseteq\texttt{Trans}(\mathscr{X})=\emptyset$.
\end{proof}

\begin{enumerate}[(4)]
\item {\it Let $T=\mathbb{Z}$ or $\mathbb{R}$ with $e=0$, then $\mathscr{X}$ is an M-flow.}
\end{enumerate}

\begin{proof}
Let $f\in X$ and $k>1$. Define a periodic function $\tilde{f}\colon T\rightarrow Y$ with period $2k+1$ such that $\tilde{f}(t)=f(t)$ for $-k\le t\le k$. So $X$ has a dense periodic points set and $\mathscr{X}$ is an M-flow by (1).
\end{proof}
\noindent
Moreover, as a matter of fact we have concluded the following fact:
\begin{enumerate}[(5)]
\item {\it Let $T=\mathbb{Z}$ or $\mathbb{R}$ with $e=0$, then $\mathscr{X}\times \mathscr{X}$ is an M-flow.}
\end{enumerate}
Here $\mathbb{R}$ is thought of as a discrete group.
This completes the construction of Example~\ref{6.1}.
\end{exa}

We notice that even for $X$ is a compact metric space,
a point-transitive semiflow need not be T.T. in general.
Let us consider a simple counterexample as follows.

\begin{exa}
Let $T=\mathbb{Q}_+$ be the additive semigroup of nonnegative rational numbers, and, let $X=\mathbb{R}_+\cup\{+\infty\}$ be equipped with the usual one-point compactification topology. Define a compact semiflow $\mathscr{X}$ as follows:
$T\times X\rightarrow X$, $(t,x)\mapsto t+x$.
Clearly, $\texttt{Trans}\,(\mathscr{X})=\{0\}$ so $\mathscr{X}$ is point-transitive. But $\mathscr{X}$ is not T.T., for $\texttt{Trans}\,(\mathscr{X})$ is not a dense $G_\delta$-subset of $X$. In fact, for open subsets $U=(10,12)$ and $V=(5,6)$ of $X$, we have $N_T(U,V)=\emptyset$.
\end{exa}

\begin{exa}\label{6.3}
Let $\mathbbm{k}\ge2$ be an integer and $\mathbb{Z}_\mathbbm{k}=\mathbb{Z}/\mathbbm{k}\mathbb{Z}$ endowed with the discrete topology. Let $\Gamma$ be an infinite set, $\varphi\colon\Gamma\rightarrow\Gamma$ a function, and $w=(w_\gamma)_{\gamma\in\Gamma}\in\mathbb{Z}_\mathbbm{k}^\Gamma$ with $w_\gamma\not\equiv0$. Then we shall consider the ``weighted shift'' induced by $\varphi$ and $w$:
$$
\sigma\colon\mathbb{Z}_\mathbbm{k}^\Gamma\rightarrow\mathbb{Z}_\mathbbm{k}^\Gamma,\quad x=(x_\gamma)_{\gamma\in\Gamma}\mapsto\sigma x=(w_\gamma x_{\varphi\gamma})_{\gamma\in\Gamma},\textrm{ i.e.}, \sigma x_\gamma=w_\gamma x_{\varphi\gamma}\ \forall \gamma\in\Gamma.
$$
See \cite{Sh}. Here the phase space $\mathbb{Z}_\mathbbm{k}^\Gamma$ is equipped with the product topology. The classical case is that $\Gamma=\mathbb{Z}_+$ or $\mathbb{Z}$, $w_\gamma\equiv1$, and $\varphi\colon \gamma\mapsto\gamma+1$ (see Example~\ref{6.1}).
Note that
$$
\sigma^nx_\gamma=w_\gamma\dotsm w_{\varphi^{n-1}\gamma}x_{\varphi^n\gamma}\quad \forall x\in\mathbb{Z}_\mathbbm{k}^\Gamma, \gamma\in\Gamma\textrm{ and }n\ge1.
$$
An element $k\in\mathbb{Z}_\mathbbm{k}$ is said to be ``invertible'' iff there exists an element in $\mathbb{Z}_\mathbbm{k}$, denoted $k^{-1}$, such that $kk^{-1}=1\pmod{\mathbbm{k}}$. For example, $2^{-1}=3$ and $3^{-1}=2$ in $\mathbb{Z}_5$. We say $w\in\mathbb{Z}_\mathbbm{k}^\Gamma$ is ``invertible'' if $w_\gamma$ is invertible for all $\gamma\in\Gamma$.
A point $\gamma\in\Gamma$ is called ``periodic'' under $(\varphi,\Gamma)$, denoted $\gamma\in\texttt{Per}(\varphi)$, iff $\varphi^\tau\gamma=\gamma$ for some integer $\tau\ge1$. By $\texttt{Per}\,(\sigma)$ we denote the set of periodic points under $(\sigma,\mathbb{Z}_\mathbbm{k}^\Gamma)$.

\begin{6.3a}[{cf.~\cite[Thm.~4.1]{Sh} by a complicated proof}]
Let $\varphi\colon\Gamma\rightarrow\Gamma$ is 1-1 and let $w$ be invertible. Then:
\begin{enumerate}[$(1)$]
\item $\sigma\colon\mathbb{Z}_\mathbbm{k}^\Gamma\rightarrow\mathbb{Z}_\mathbbm{k}^\Gamma$ is continuous onto.
\item $\texttt{Per}\,(\sigma)$ is dense in $\mathbb{Z}_\mathbbm{k}^\Gamma$.
\end{enumerate}
\end{6.3a}

\begin{proof}
\item (1): The continuity is obvious. Now let $y=(y_\gamma)_{\gamma\in\Gamma}\in\mathbb{Z}_\mathbbm{k}^\Gamma$. Put $x_\gamma=0$ if $\gamma\notin\varphi(\Gamma)$, and, $x_\gamma=w_\alpha^{-1}y_\alpha$ if $\alpha\in\Gamma$ and $\gamma=\varphi\alpha$. Then $x=(x_\gamma)_{\gamma\in\Gamma}\in\mathbb{Z}_\mathbbm{k}^\Gamma$ such that $\sigma x=y$.

\item (2): Let $z=(z_\gamma)_{\gamma\in\Gamma}\in\mathbb{Z}_\mathbbm{k}^\Gamma$ and let $\gamma_1,\dotsc,\gamma_n\in\Gamma$ be pairwise distinct. It suffices to show there is a $\sigma$-periodic point in the clopen cylinder neighborhood of $z$:
$$U:=[z_{\gamma_1}]_{\gamma_1}\cap\dotsm\cap[z_{\gamma_n}]_{\gamma_n}=\{x=(x_\gamma)_{\gamma\in\Gamma}\,|\,x_{\gamma_1}=z_{\gamma_1}, \dotsc, x_{\gamma_n}=z_{\gamma_n}\}.$$
Since $\varphi$ is 1-1, there is no loss of generality in considering only the special case that $n=4$ and that $\gamma_1$ is a periodic point of $\varphi$ and $\gamma_2, \gamma_3, \gamma_4$ are non-periodic points with some integer $m\ge1$ such that
$\{\gamma_3,\ \varphi\gamma_3,\ \dotsc,\ \varphi^m\gamma_3=\gamma_2,\ \varphi^{m+1}\gamma_3=\varphi\gamma_2,\  \varphi^{m+2}\gamma_3=\varphi^2\gamma_2,\ \varphi^{m+3}\gamma_3=\varphi^3\gamma_2,\ \dotsc\}
$ and $\{\gamma_4,\ \varphi\gamma_4,\ \varphi^2\gamma_4,\ \dotsc\}$
have no common elements, for the other cases may be proved analogously.

Notice that $r\in\mathbb{Z}_\mathbbm{k}$ is invertible iff there exists an integer $\ell\ge1$ such that $r^\ell=1\pmod{\mathbbm{k}}$. \footnote{Sufficiency is obvious. Now conversely, assume $r$ is invertible. Clearly, $r^\ell$, for $\ell\ge1$, is invertible. Moreover, there exists a sequence $\ell_1, \ell_2, \dotsc\to\infty$ such that $r^{\ell_1}=r^{\ell_2}=\dotsm\pmod{\mathbbm{k}}$. Thus, $r^\ell=1\pmod{\mathbbm{k}}$ for some $\ell\ge1$.}
Then we can find an integer $\tau$ such that
$$
\tau\ge m+1,\quad \varphi^\tau\gamma_1=\gamma_1,\quad w_{\varphi^{\tau-1}\gamma_1}\dotsm w_{\gamma_1}=1.
$$
Let $\Gamma_0=\{\gamma_1,\dotsc,\varphi^{\tau-1}\gamma_1\}\cup\{\varphi^{i\tau}\gamma_j\,|\,j=2,3,4; i\ge0\}$. We now define a point $x=(x_\gamma)_{\gamma\in\Gamma}\in\mathbb{Z}_\mathbbm{k}^\Gamma$ as follows:
\begin{enumerate}[(a)]
\item $x_{\gamma}=z_{\gamma}$ for $\gamma\in\{\gamma_1, \varphi\gamma_1, \dotsc, \varphi^{\tau-1}\gamma_1\}$;
\item $x_{\gamma_2}=z_{\gamma_2}$ and $x_{\varphi^{i\tau}\gamma_2}=(w_{\varphi^{i\tau-1}\gamma_2}\dotsm w_{\gamma_2})^{-1}z_{\gamma_2}$ for $i=1,2,\dotsc$;
\item $x_{\gamma_3}=z_{\gamma_3}$ and $x_{\varphi^{i\tau}\gamma_3}=(w_{\varphi^{i\tau-1}\gamma_3}\dotsm w_{\gamma_3})^{-1}z_{\gamma_3}$ for $i=1,2,\dotsc$;
\item $x_{\gamma_4}=z_{\gamma_4}$ and $x_{\varphi^{i\tau}\gamma_4}=(w_{\varphi^{i\tau-1}\gamma_4}\dotsm w_{\gamma_4})^{-1}z_{\gamma_4}$ for $i=1,2,\dotsc$;
\item if $\gamma\notin\Gamma_0$ such that $\varphi^\tau\gamma=\gamma_j$ for some $j=2,3,4$, then $x_\gamma=(w_{\varphi^{\tau-1}\gamma}\dotsm w_\gamma)^{-1}z_{\gamma_j}$;
\item $x_\gamma=0$ for the other $\gamma$ in $\Gamma$.
\end{enumerate}
It is clear that $\sigma^\tau x=x$ and $x\in U$. The proof is complete.
\end{proof}

\begin{6.3b}[{cf.~\cite[Thm.~5.2]{Sh} for proving $(\sigma,\mathbb{Z}_\mathbbm{k}^\Gamma)$ to be T.T.}]
Let $\varphi\colon\Gamma\rightarrow\Gamma$ be 1-1 without periodic points; let $w$ be invertible. Then $(\sigma,\mathbb{Z}_\mathbbm{k}^\Gamma)$ is weakly mixing as a $\mathbb{Z}_+$-semiflow; and moreover, $(\sigma,\mathbb{Z}_\mathbbm{k}^\Gamma\times\mathbb{Z}_\mathbbm{k}^\Gamma)$ is Devaney chaotic (T.T. with a dense set of periodic points).
\end{6.3b}

\begin{proof}
Let
$$
U_1=[k_1]_{\gamma_1}\cap\dotsm\cap[k_n]_{\gamma_n},\quad U_2=[i_1]_{\alpha_1}\cap\dotsm\cap[i_n]_{\alpha_n},
$$
and
$$
V_1=[j_1]_{\delta_1}\cap\dotsm\cap[j_m]_{\delta_m},\quad V_2=[\ell_1]_{\lambda_1}\cap\dotsm\cap[\ell_m]_{\lambda_m}
$$
be cylinder subsets of $\mathbb{Z}_\mathbbm{k}^\Gamma$. To show $(\sigma,\mathbb{Z}_\mathbbm{k}^\Gamma)$ is weakly mixing, it suffices to prove that there exists an integer $\tau>0$ such that
$\sigma^\tau(U_1\times U_2)\cap(V_1\times V_2)\not=\emptyset$.
We notice that since $\varphi$ is 1-1 and has no periodic points, there exists an integer $\tau\ge1$ such that
$$
\{\gamma_1,\dotsc,\gamma_n,\alpha_1,\dotsc,\alpha_n\}\cap\left(\{\varphi^i\delta_j\,|\,i\ge\tau, 1\le j\le m\}\cup\{\varphi^i\lambda_j\,|\,i\ge\tau, 1\le j\le m\}\right)=\emptyset.
$$
Let $x=(x_\gamma)_{\gamma\in\Gamma}\in U_1$ and $y=(y_\gamma)_{\gamma\in\Gamma}\in U_2$ be defined as follows:
$$
x_\gamma=
\begin{cases}
k_i& \textrm{if }\gamma=\gamma_i\textrm{ for }1\le i\le n,\\
\left(w_{\varphi^{\tau-1}\delta_i}\dotsm w_{\delta_i}\right)^{-1}j_i& \textrm{if }\gamma=\varphi^\tau\delta_i\textrm{ for }1\le i\le m,\\
0 & \textrm{for other }\gamma\in\Gamma.
\end{cases}
$$
and
$$
y_\gamma=
\begin{cases}
i_j& \textrm{if }\gamma=\alpha_j\textrm{ for }1\le j\le n,\\
\left(w_{\varphi^{\tau-1}\lambda_j}\dotsm w_{\lambda_j}\right)^{-1}\ell_j& \textrm{if }\gamma=\varphi^\tau\lambda_j\textrm{ for }1\le j\le m,\\
0 & \textrm{for other }\gamma\in\Gamma.
\end{cases}
$$
Clearly, $\sigma^\tau x\in V_1$ and $\sigma^\tau y\in V_2$. Thus $(x,y)\in U_1\times U_2$ such that $\sigma^\tau(x,y)\in V_1\times V_2$.

This proves that $(\sigma,\mathbb{Z}_\mathbbm{k}^\Gamma)$ is weakly mixing. By Lemma~\ref{6.3}a, $(\sigma,\mathbb{Z}_\mathbbm{k}^\Gamma\times\mathbb{Z}_\mathbbm{k}^\Gamma)$ has a dense set of periodic points. Thus $(\sigma,\mathbb{Z}_\mathbbm{k}^\Gamma\times\mathbb{Z}_\mathbbm{k}^\Gamma)$ is Devaney chaotic.
\end{proof}

In fact, it is easy to verify that if $(\sigma,\mathbb{Z}_\mathbbm{k}^\Gamma)$ is weakly mixing, then $\sigma$ is onto so that $\varphi$ is 1-1 without periodic points and $w$ is invertible.

\begin{6.3c}
Let $\Gamma$ be countable. Let $\varphi\colon\Gamma\rightarrow\Gamma$ be 1-1 without periodic points; let $w$ be invertible. Then there exists a dense uncountable Li-Yorke scrambled subset of $\mathbb{Z}_\mathbbm{k}^\Gamma$ rel. $\mathbb{Z}_\mathbbm{k}^\Gamma\times\mathbb{Z}_\mathbbm{k}^\Gamma$.
\end{6.3c}

\begin{proof}
This follows easily from Theorem~\ref{6.3}b and Proposition~\ref{5.6}.
\end{proof}

Recall that $(x,y)\in\mathbb{Z}_\mathbbm{k}^\Gamma\times\mathbb{Z}_\mathbbm{k}^\Gamma$ is called an ``asymptotic pair'' if for all $\varepsilon\in\mathscr{U}_{\mathbb{Z}_\mathbbm{k}^\Gamma}$ there exists an integer $\tau>0$ such that $(\sigma^nx,\sigma^ny)\in\varepsilon$ for all $n\ge\tau$.

\begin{6.3d}
Let $\varphi\colon\Gamma\rightarrow\Gamma$ be 1-1 having a non-periodic point and let $w$ be invertible. Then there exists a  uncountable subset $\Theta$ of $\mathbb{Z}_\mathbbm{k}^\Gamma$ such that $(x,y)$ is a proximal non-asymptotic pair under $(\sigma,\mathbb{Z}_\mathbbm{k}^\Gamma)$ for all $x,y\in\Theta$ with $x\not=y$.
\end{6.3d}

\begin{proof}
Let $\theta\in\Gamma$ be a non-periodic point of $\varphi$. Since $\varphi$ is 1-1, then $\Gamma_\theta:=\{\gamma,\varphi\gamma,\varphi^2\gamma,\dotsc\}$ is a countable infinite $\varphi$-invariant pairwise distinct subset of $\Gamma$.
By considering the naturally defined system $(\sigma,\mathbb{Z}_\mathbbm{k}^{\Gamma_\theta})$ based on $\Gamma_\theta\xrightarrow{\gamma\mapsto\varphi\gamma}\Gamma_\theta$ and $\Gamma_\theta\xrightarrow{\gamma\mapsto w_\gamma}\mathbb{Z}_\mathbbm{k}$, and by Proposition~\ref{6.3}c, it follows that there exists a uncountable Li-Yorke scrambled set $\Xi$ in $\mathbb{Z}_\mathbbm{k}^{\Gamma_\theta}$ rel. $\mathbb{Z}_\mathbbm{k}^{\Gamma_\theta}\times\mathbb{Z}_\mathbbm{k}^{\Gamma_\theta}$.
For all $x=(x_\gamma)_{\gamma\in\Gamma_\theta}\in\Xi$, define $x^\prime\in\mathbb{Z}_\mathbbm{k}^\Gamma$ by $x_\gamma^\prime=x_\gamma$ if $\gamma\in\Gamma_\theta$ and $x_\gamma^\prime=0$ if $\gamma\in\Gamma\setminus\Gamma_\theta$. Let $\Theta=\{x^\prime\,|\,x\in\Xi\}$. Clearly $\Theta$ possesses the desired property. The proof is complete.
\end{proof}

\begin{6.3e}
The condition $w$ is invertible is critical for Lemma~\ref{6.3}a, Theorem~\ref{6.3}b, Propositions~\ref{6.3}c and \ref{6.3}d. Otherwise we can construct a counterexample as follows: Let $\mathbbm{k}=4$, $w_\gamma\equiv 2$ $(\not=0)$ for all $\gamma\in\Gamma$, and $o=(o_\gamma)_{\gamma\in\Gamma}\in\mathbb{Z}_4^\Gamma$ with $o_\gamma=0$ for $\gamma\in\Gamma$. Now let $x\in\mathbb{Z}_4^\Gamma$. Since $2\mathbb{Z}_4=\{0,2\}$, then $\sigma x\in\{0,2\}^\Gamma$, $\sigma^2x=o$, and $\sigma^nx=0$ for all $n\ge3$,
for all 1-1 function $\varphi\colon\Gamma\rightarrow\Gamma$. Thus, $(\sigma,\mathbb{Z}_\mathbbm{k}^\Gamma)$ has no any chaotic dynamics.
\end{6.3e}
\end{exa}
\bigskip
\noindent
\textbf{Acknowledgments. }
This work was supported by National Natural Science Foundation of China (Grant No. 11790274) and PAPD of Jiangsu Higher Education Institutions.

\end{document}